\documentclass[a4paper]{amsart}
\usepackage{amssymb}
\usepackage{amscd}
\usepackage{amsthm}
\usepackage{amsmath}
\usepackage{mathtools}
\usepackage{latexsym}
\usepackage[all]{xy}
\usepackage[utf8]{inputenc}
\usepackage{enumitem}
\usepackage{xcolor}

\makeatletter
\@namedef{subjclassname@2020}{\textup{2020} Mathematics Subject Classification}
\makeatother

\setlist[enumerate]{label={(\roman*)}}%,parsep=\smallskipamount,itemsep=0pt}

\theoremstyle{plain}
\newtheorem{theorem}{Theorem}
\newtheorem{corollary}[theorem]{Corollary}
\newtheorem{lemma}[theorem]{Lemma}
\newtheorem{proposition}[theorem]{Proposition}

\theoremstyle{definition}
\newtheorem{definition}[theorem]{Definition}
\newtheorem{example}[theorem]{Example}

\theoremstyle{remark}

\newtheorem*{remark}{Remark}

\numberwithin{theorem}{section}

\newcommand{\Spec}[1]{\operatorname{Spec}(#1)}

\newcommand{\card}{\mbox{\rm{card\,}}}
\newcommand{\Add}{\mbox{\rm{Add\,}}}

\newcommand{\Ass}[2]{\mbox{\rm{Ass}}_{#1}(#2)}

\newcommand{\Hom}[3]{\operatorname{Hom}_{#1}(#2,#3)}
\newcommand{\Ext}[4]{\operatorname{Ext}^{#1}_{#2}(#3,#4)}

\newcommand{\rfmod}[1]{\mbox{\rm{mod}--}{#1}}
\newcommand{\rmod}[1]{\mbox{\rm{Mod}--}{#1}}
\newcommand{\lfmod}[1]{{#1}\mbox{--\rm{mod}}}
\newcommand{\lmod}[1]{{#1}\mbox{--\rm{Mod}}}

\newcommand{\ra}{\rightarrow}

\begin{document}

\title[Balanced pairs]{Balanced pairs, virtually Gorenstein rings, and cotorsion torsion triples}

\author{Sergio Estrada}
\address{Departamento de Matem$\acute{a}$ticas, Universidad de Murcia, 30100 Murcia, Spain}
\email{sestrada@um.es}

\author{Jiangsheng Hu}
\address{School of Mathematics, Hangzhou Normal University, Hangzhou 311121, China}
\email{jiangshenghu@hotmail.com}

\author{Jan Trlifaj}
\address{Charles University, Faculty of Mathematics
and Physics, Department of Algebra, 186 75 Prague 8, Czech Republic}
\email{trlifaj@karlin.mff.cuni.cz}

\begin{abstract} For any ring $R$, we investigate balanced pairs of classes of modules and their relations to cotorsion triples. We characterize the case when a balanced pair generates a tilting cotorsion pair, and dually, when it cogenerates a cotilting cotorsion pair. If $R$ is right noetherian, we prove that the pair consisting of Gorenstein projective modules and Gorenstein injective modules is balanced if and only if $R$ is right virtually Gorenstein.

In \cite{BBOS}, cotorsion torsion triples in abelian categories were employed in the representation theory of rectangular grids occurring in persistent homology theory. For module categories, we use infinite dimensional tilting theory to completely classify all cotorsion torsion triples by means of $1$-resolving subcategories of $\rfmod R$, and to give an explicit 1-1 correspondence between the formally dual notions of cotorsion torsion triples of right $R$-modules and torsion cotorsion triples of left $R$-modules. This correspondence is bijective in case the underlying ring $R$ is left noetherian, but not in general.
\end{abstract}

\date{\today}

\thanks{The first author was supported by grant 22004/PI/22 funded by Fundaci\'on S\'eneca, Agencia de Ciencia y Tecnolog\'ia de la Regi\'on de Murcia and by grant PID2024-155576NB-I00 funded by MICIU/AEI/10.13039/501100011033 /FEDER, UE. The second author was supported by the NSF of China (Grant No. 12571035) and Jiangsu 333 Project. Research of the third author supported by GA\v CR 26-22734S}
	
\subjclass[2020]{Primary: 16E30, 18G25 Secondary: 16D40, 16D50, 16D90, 16E65}
\keywords{Balanced pairs, cotorsion pairs, virtually Gorenstein rings, tilting classes, cotorsion torsion triples.}

\maketitle

\section{Introduction}

This paper investigates the role of tilting and cotilting classes in various settings of relative homological algebra. The first setting concerns the balance for computing derived functors of the bifunctor $\Hom{R}{-}{-}$: a pair of classes of modules $\mathcal{F}\times\mathcal{G}$, where $\mathcal{F}$ is precovering and $\mathcal{G}$ is preenveloping, is said to be \emph{balanced} provided that these functors can be computed using resolutions in each variable. One way to obtain such balanced pairs is via Ext-orthogonal cotorsion triples. We consider the question of whether the associated classes $\mathcal{F}$ and $\mathcal{G}$ generate and cogenerate tilting and cotilting cotorsion pairs, and we characterize such pairs in Corollaries~\ref{tilt} and \ref{cotilt}. We also address whether cotorsion triples can be constructed from tilting cotorsion pairs (Propositions \ref{trivial} and \ref{almosttrivial}).

In the course of our work, we discover two new interesting applications of balanced pairs in Gorenstein homological algebra. The first application shows that if $R$ is a noetherian non-perfect ring, then the natural covering class of Gorenstein flat modules can never be the left-hand part of a balanced pair (Proposition \ref{prop:GF-balanced-pair}). This result is the Gorenstein analogue of the lack of balance for the class of flat modules, initially proved by Enochs in \cite{Enochs15} for commutative noetherian rings and later extended in \cite{EPZ} to noncommutative noetherian rings.

Our second application concerns Gorenstein analogues of projective and injective modules, the Gorenstein projective and Gorenstein injective modules. Let $R$ be a ring and let $\rmod R$ (resp. $\lmod R$) denote the category of all right (resp. left) $R$-modules. It is well known that the pair of classes of projective and injective $R$-modules $\mathcal{P}_{0}\times\mathcal{I}_{0}$ forms a balanced pair in $\rmod R$; in other words, the functors $\textrm{Ext}^i_{R}(-,-)$ can be computed using either projective or injective resolutions. However, in Gorenstein homological algebra, the characterization of the rings $R$ for which the pair $\mathcal{GP}\times \mathcal{GI}$, of Gorenstein projective and Gorenstein injective modules, is balanced in $\rmod R$ has remained open. Here we solve the question when $R$ is right noetherian, by showing in Theorem \ref{cor-GP-virtually-Gor} that $\mathcal{GP}\times \mathcal{GI}$ forms a balanced pair in $\rmod R$ if and only if $R$ is right \emph{virtually Gorenstein}.
Recall that virtually Gorenstein algebras were introduced by Reiten and Beligiannis in \cite{BR07} in the context of artin algebras and later extended to arbitrary rings in \cite{EPZ,DLW}. This result shows that, over noetherian rings, virtually Gorenstein rings provide the appropriate framework for a well-behaved version of Gorenstein homological algebra. Indeed, for such rings (noetherian or not), the class $\mathcal{GP}$ is known to be special precovering, and every Gorenstein projective module is Gorenstein flat (\cite{WE25})---properties that are classical questions in Gorenstein homological algebra that remain open for general rings.

In the second part of the paper, we return to our main motivation: the study of tilting and cotilting classes with respect to orthogonal pairs and triples. Recall that a \emph{torsion pair} $(\mathcal{T},\mathcal{F})$ in $\rmod R$ gives a splitting of the category into a torsion class $\mathcal{T}$ and a torsion-free class $\mathcal{F}$, while a \emph{cotorsion pair} $(\mathcal{A},\mathcal{B})$ provides a homological orthogonal decomposition. A \emph{cotorsion torsion triple} $(\mathcal{C}, \mathcal{T}, \mathcal{F})$ in $\rmod R$ combines both structures: $(\mathcal{C}, \mathcal{T})$ is a cotorsion pair and $(\mathcal{T}, \mathcal{F})$ a torsion pair. Dually, a \emph{torsion cotorsion triple} $(\mathcal{T}, \mathcal{F}, \mathcal{D})$ in $\lmod R$ consists of a torsion pair $(\mathcal{T}, \mathcal{F})$ and a cotorsion pair $(\mathcal{F}, \mathcal{D})$ (see Definition \ref{general} for details).

Such triples were recently introduced by Bauer, Botnan, Oppermann, and Steen in abelian categories with enough projectives as a fundamental tool for developing representation theory of rectangular grids occurring in persistent homology theory, \cite{BBOS}. Our definition generalizes \cite[Definitions 2.6 and 2.9]{BBOS} (see also \cite{B}), where the cotorsion pairs involved were assumed to be \emph{complete}, that is, $\mathcal C$ and $\mathcal F$ were assumed to be special precovering classes, and $\mathcal T$ and $\mathcal D$ special preenveloping classes, cf.\ \cite[Remark 2.7]{BBOS}.

Our main contribution in this direction is a full classification of both the cotorsion torsion triples and the torsion cotorsion triples in module categories in Theorems \ref{ctt} and \ref{tct}. These results generalize and extend the earlier ones in \cite{B,BBOS}. The key difference in our approach is the use of infinite dimensional tilting theory in the sense of \cite{GT}, which makes it possible to remove the a priori assumption of completeness required in \cite{B,BBOS}.

For any ring $R$, cotorsion torsion triples in $\rmod R$ are classified by $1$-resolving subcategories of the category $\rfmod R$ of all finitely presented modules. The classic Ext-Tor duality then yields a 1-1 map from the cotorsion torsion triples in $\rmod R$ to the dual setting of torsion cotorsion triples in $\lmod R$. This map is bijective when $R$ is left noetherian, and provides thus an explicit expression for the formal duality between the two settings (Corollary~\ref{explicit}). We also present alternative forms of the classification available in various particular settings in Corollaries~\ref{artin+noe} and~\ref{comnoe}, notably for artin algebras and commutative noetherian rings, and in Corollary~\ref{cor:GP-ctt} in the setting of Gorenstein homological algebra.

The structure of the paper is as follows. After presenting the necessary preliminaries in Section \ref{preliminaries}, Section \ref{Section3} is devoted to the study of balanced pairs and virtually Gorenstein rings. We characterize the conditions under which a balanced pair yields either a tilting or a cotilting cotorsion pair. Furthermore, under specified sufficient conditions, we prove that the balance between the Gorenstein projective and Gorenstein injective modules holds if and only if the ring is right virtually Gorenstein. Finally, Section \ref{Section4} is dedicated to classifying cotorsion torsion triples and torsion cotorsion triples in module categories using infinite dimensional tilting theory.

\section{Preliminaries}\label{preliminaries}

In what follows, $R$ denotes an associative ring with identity, $\rmod R$ the category of all (right $R$-) modules, and $\rfmod R$ the category of all finitely presented modules. The category of all left $R$-modules is denoted by $\lmod R$.

The class of all projective (injective, and flat) modules is denoted by $\mathcal P _0$ ($\mathcal I_0$, and $\mathcal F_0$), while $\mathcal P$ ($\mathcal I$) stands for the class of all modules of finite projective (injective) dimension. For a module $M$, its projective and injective dimensions are denoted by $\operatorname{pd}(M)$ and $\operatorname{id}(M)$, respectively.

The superscript $\perp_{i}$ denotes orthogonality with respect to the bifunctor $\textrm{Ext}_{R}^{i}(-,-)$ for $i\geq0$. More precisely, if $C\in{\rmod R}$, then $C^{\perp_{i}}=\textrm{Ker}~\textrm{Ext}_{R}^{i}(C,-)$ and $C ^{\perp_\infty} = \bigcap_{i \geq 1} C^{\perp_{i}}$, and analogously for $^{\perp_{i}}C$ and $^{\perp_\infty}C$. For a class $\mathcal {C}\subseteq{\rmod R}$, we let $\mathcal{C}^{\perp_{i}}=\bigcap_{C\in{\mathcal{C}}} C^{\perp_{i}}$, and similarly for $\mathcal{C}^{\perp_{\infty}}$, $^{\perp_{i}}\mathcal{C}$, and $^{\perp_{\infty}}\mathcal{C}$.

For a class $\mathcal {C}\subseteq{\rmod R}$, we will denote by $\mathcal C ^\intercal$ the class of all left $R$-modules $N$ such that $\textrm{Tor}^{R}_{1}(C,N) = 0$ for all $C \in \mathcal C$. Similarly, for a class $\mathcal {D}\subseteq{\lmod R}$, ${}^\intercal \mathcal D$ denotes the class of all modules $M$ such that $\textrm{Tor}^{R}_{1}(M,D) = 0$ for all $D \in \mathcal D$.

\subsection{Precovering and preenveloping classes}

Let $\mathcal{C}\subseteq \rmod R$ and $M\in{\rmod R}$. A homomorphism $\phi: M\rightarrow C$ in $\rmod R$ with $C\in \mathcal{C}$ is a
\emph{$\mathcal{C}$-preenvelope} of $M$ if for any
homomorphism $f:M\rightarrow C'$ with $C'\in \mathcal{C}$, there is
a homomorphism $g:C\rightarrow C'$ such that $g\phi=f$. Moreover, if
the only such $g$ are automorphisms of $C$ when $C'=C$ and $f=\phi$,
the $\mathcal{C}$-preenvelope $\phi$ is called a
\emph{$\mathcal{C}$-envelope} of $M$.

A $\mathcal{C}$-preenvelope $f:M\rightarrow C$ is \emph{special} if it is a monomorphism with $\textrm{Coker}f\in{{^{\perp_{1}}\mathcal{C}} }$. $\mathcal{C}\subseteq{\rmod R}$ is a \emph{(special) preenveloping class} if each module in $\rmod R$ has a (special) $\mathcal{C}$-preenvelope.

Dually, a homomorphism $\phi: C\rightarrow M$ in $\rmod R$ with $C\in \mathcal{C}$ is called a
\emph{$\mathcal{C}$-precover} of $M$ if for any
homomorphism $f:C'\rightarrow M$ with $C'\in \mathcal{C}$, there is
a homomorphism $g:C'\rightarrow C$ such that $f=\phi{g}$. Moreover, if
the only such $g$ are automorphisms of $C$ when $C'=C$ and $f=\phi$,
the $\mathcal{C}$-precover $\phi$ is called a
\emph{$\mathcal{C}$-cover} of $M$.

A $\mathcal{C}$-precover $f:C\rightarrow M$ is \emph{special} if it is an epimorphism with $\textrm{ker}f\in{\mathcal{C}^{\perp_{1}}}$. $\mathcal{C}\subseteq{\rmod R}$ is a \emph{(special) precovering class} if each module in $\rmod R$ has a (special) $\mathcal{C}$-precover.

We will need the following well-known closure properties of preenveloping and precovering classes (see e.g.\ \cite[Lemma 2.1]{ST}):

\begin{lemma}\label{easy}  Let $R$ be a ring and $\mathcal C$ be a class of modules closed under isomorphisms and direct summands.
\begin{enumerate}
\item[(1)] Assume $\mathcal C$ is preenveloping. Then $\mathcal C$ is closed under direct products.
\item[(2)] Assume $\mathcal C$ is precovering. Then $\mathcal C$ is closed under direct sums.
\end{enumerate}
\end{lemma}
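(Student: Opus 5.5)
For (1), I would take a family $(C_i)_{i\in I}$ in $\mathcal C$ and show that $P=\prod_{i\in I}C_i$ is a direct summand of a module in $\mathcal C$; closure under direct summands and isomorphisms then gives $P\in\mathcal C$. Since $\mathcal C$ is preenveloping, $P$ has a $\mathcal C$-preenvelope $\phi\colon P\to C$ with $C\in\mathcal C$. Let $\pi_i\colon P\to C_i$ be the canonical projections. Each $\pi_i$ is a map from $P$ into a module of $\mathcal C$, so the preenvelope property yields $g_i\colon C\to C_i$ with $g_i\phi=\pi_i$.

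By the universal property of the product, the family $(g_i)_{i\in I}$ induces a homomorphism $g\colon C\to P$ with $\pi_i g=g_i$ for all $i$. Then $\pi_i g\phi=g_i\phi=\pi_i$ for every $i$, and since the $\pi_i$ are jointly monic, $g\phi=1_P$. Thus $\phi$ is a split monomorphism, $P$ is isomorphic to a direct summand of $C$, and so $P\in\mathcal C$.

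Part (2) is the exact dual. Let $S=\bigoplus_{i\in I}C_i$ with canonical injections $\nu_i$, and take a $\mathcal C$-precover $\phi\colon C\to S$. Each $\nu_i\colon C_i\to S$ factors as $\nu_i=\phi h_i$ for some $h_i\colon C_i\to C$, and the coproduct property gives $h\colon S\to C$ with $h\nu_i=h_i$. Then $\phi h\nu_i=\nu_i$ for all $i$, so $\phi h=1_S$. Hence $\phi$ is a split epimorphism, $S$ is a direct summand of $C$, and $S\in\mathcal C$.

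There is no real obstacle here. The only point to be careful about is applying the (pre)envelope property to the product or sum itself, not to the individual factors, and then using the universal property to assemble the maps $g_i$ (or $h_i$) into a single splitting map.
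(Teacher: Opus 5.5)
Your proof is correct: you take a preenvelope of the product (resp.\ a precover of the sum), factor the canonical projections (resp.\ injections) through it, and assemble the resulting maps into a splitting. The paper gives no proof of its own and simply cites this as a well-known lemma from the literature; your argument is the standard proof of it.
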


\subsection{Cotorsion pairs and cotorsion triples}

A pair of classes $(\mathcal{A}, \mathcal{B})$ in $\rmod R$ is called a \emph{cotorsion pair} \cite[\S 5.2]{GT} (or a \emph{cotorsion theory}, \cite{Salce}) if $\mathcal{A} = {^{\perp_{1}}\mathcal{B}}$ and $\mathcal{B} = \mathcal{A}^{\perp_{1}}$. The cotorsion pair $(\mathcal{A}, \mathcal{B})$ is said to be \emph{complete} if $\mathcal{A}$ is special precovering in $\rmod R$. By Salce's
Lemma \cite[5.20]{GT}, this is equivalent to $\mathcal{B}$ being a special preenveloping class in $\rmod R$.

A cotorsion pair $\mathfrak C = (\mathcal{A}, \mathcal{B})$ is \emph{hereditary} if $\textrm{Ext}_R^i(A,B)=0$ for every $A\in{\mathcal{A}}$, $B\in{\mathcal{B}}$, and $i \ge 1$. By \cite[Lemma 5.24]{GT}, $\mathfrak C$ is hereditary if and only if $\mathcal{A}$ is \emph{resolving} in $\rmod R$, that is, $\mathcal{A}$ is closed under extensions, $\mathcal{P}_{0}\subseteq \mathcal{A}$,  and ${\mathcal{A}}$ is \emph{closed under kernels of epimorphisms} (i.e., $A\in{\mathcal{A}}$ whenever $0 \rightarrow A \rightarrow B \rightarrow C \rightarrow 0$ is an exact sequence in $\rmod R$ such that $B, C
\in{\mathcal{A}}$). Moreover, $\mathfrak C$ is hereditary if and only if $\mathcal{B}$ is \emph{coresolving} in $\rmod R$, that is, $\mathcal{B}$ is closed under extensions, $\mathcal{I}_{0}\subseteq\mathcal{B}$,  and $\mathcal{B}$ is \emph{closed under cokernels of monomorphisms} (i.e., $C\in{\mathcal{A}}$ whenever $0 \rightarrow A \rightarrow B \rightarrow C \rightarrow 0$ is an exact sequence in $\rmod R$ such that $A, B \in{\mathcal{A}}$).

A class $\mathcal C \subseteq \rmod R$ is \emph{thick} if $\mathcal C$ is closed under direct summands, extensions, kernels of epimorphisms, and cokernels of monomorphisms.

In Section \ref{Section4}, we will need $1$-resolving subcategories of finitely presented modules for a classification of cotorsion torsion triples:

\begin{definition}\label{small} Let $R$ be a ring and $\mathcal S \subseteq \rfmod R$.

$\mathcal S$ is a \emph{resolving subcategory} of $\rfmod R$ in case $\mathcal S \subseteq \rfmod R$ is closed under extensions, direct summands, $\mathcal{P}_{0} \cap \rfmod R \subseteq \mathcal{S}$, and $\mathcal S$ is closed under kernels of epimorphisms.

$\mathcal S$ is called \emph{$1$-resolving}, if moreover $\mathcal S$ consists of modules of projective dimension $\leq 1$.

Equivalently, $\mathcal S$ is a $1$-resolving subcategory of $\rfmod R$, if and only if $\mathcal S \subseteq \rfmod R$ consists of modules of projective dimension $\leq 1$, $R \in \mathcal S$, and $\mathcal S$ is closed under extensions and direct summands (see \cite[Lemma 13.48]{GT}).
\end{definition}

A cotorsion pair $(\mathcal{A}, \mathcal{B})$ is \emph{perfect} if every module $M$ admits both an $\mathcal{A}$-cover and a $\mathcal{B}$-envelope.

For example, $(\mathcal P _0, \rmod R)$, $(\rmod R,\mathcal I_0)$, and $(\mathcal F_0,\mathcal F_0 ^{\perp_1})$ are complete cotorsion pairs. The latter two cotorsion pairs are perfect for any ring $R$, while the first one is perfect, if and only if $R$ is a right perfect ring. Let $\mathcal{FPI} = \rfmod R ^{\perp_1}$ denote the class of all fp-injective (= absolutely pure) modules. Then the pair $(^{\perp_{1}}\mathcal{FPI}, \mathcal{FPI})$ is a complete cotorsion pair. It is hereditary if and only if $R$ is right coherent.

Let $\mathcal{A},\mathcal{B},\mathcal{C}\subseteq{\rmod R}$. The triple $t = (\mathcal{A},\mathcal{B},\mathcal{C})$ is called a \emph{cotorsion triple} \cite[\S3 of Chapter VI]{BR07} provided that both $(\mathcal{A},\mathcal{B})$ and $(\mathcal{B},\mathcal{C})$ are cotorsion pairs. The triple $t$ is \emph{complete} (\emph{hereditary}) provided that both these cotorsion pairs are complete (hereditary).

\begin{remark}\label{abund-rare} Complete hereditary cotorsion pairs are abundant: for example, if the ring $R$ is not right perfect, then there is a proper class of complete hereditary cotorsion pairs $(\mathcal{A}, \mathcal{B})$ in $\rmod R$ such that $\mathcal P_0 \subseteq \mathcal{A} \subseteq \mathcal F _0$, cf.\ \cite[2.1 and 2.2]{EGT}.

In contrast, cotorsion triples are rare: There is always the trivial cotorsion triple $t_0 = (\mathcal P_0, \rmod R, \mathcal I_0)$. However, if $t = (\mathcal{A},\mathcal{B},\mathcal{C})$ is a hereditary cotorsion triple in $\rmod R$, then the class $\mathcal B$ is coresolving and contains $\mathcal P_0$, whence $\mathcal B \supseteq \mathcal P$. So if $R$ has finite right global dimension, then $t = t_0$ is the only hereditary cotorsion triple in $\rmod R$.

For a different setting, assume $R$ is an \emph{Iwanaga-Gorenstein} ring, that is, $R$ is a left and right noetherian ring and the injective dimensions of both the regular left and right modules are finite (for example, let $R$ be any commutative local noetherian Gorenstein ring). If $R$ has infinite right global dimension, then there is a non-trivial complete hereditary cotorsion triple with the middle term $\mathcal B = \mathcal P = \mathcal I$ (see  e.g.\ \cite[Example 8.13]{GT}). We will have more on cotorsion triples and (virtually) Gorenstein rings in Section \ref{Section3}.
\end{remark}

\subsection{Tilting and cotilting modules and cotorsion pairs}

We now recall basic facts about (not necessarily finitely generated) tilting and cotilting modules.

A right $R$-module $T$ is  \emph{tilting} \cite{AC01,CT95} if the following conditions are satisfied:
\begin{enumerate}
\item[$(T1)$] $\operatorname{pd}(T)$ $<$ $\infty$.
\item[$(T2)$] $T^{(\lambda)}\in{T^{\perp_{\infty}}}$ for every cardinal $\lambda$.
\item[$(T3)$] There is a long exact sequence $$0\ra R\ra T_{0}\ra \cdots\ra T_{n}\ra 0$$ with $T_{i}$ $\in$ $\Add T$ for all $i \leq n$, where $n$ is the projective dimension of $T$. Here, $\Add T$ denotes the class of all direct summands of direct sums of copies of the module $T$.
\end{enumerate}

The class $\mathcal T _T = T^{\perp_{\infty}}$ is called the \emph{tilting class} induced by $T$, and the complete hereditary cotorsion pair $\mathfrak C _T = (^{\perp_1} \mathcal T _T, \mathcal T _T)$ the \emph{tilting cotorsion pair} induced by $T$.

If $n \geq 0$, then $T$, $\mathcal T _T$, and $\mathfrak C _T$ are called \emph{$n$-tilting} when $T$ has projective dimension $\leq n$; in this case, $^\perp \mathcal T _T$ coincides with the class of all modules possessing an $\Add T$-coresolution of length $\leq n$, and $\mathcal T _T$ with the class of all modules possessing a (possibly infinite) $\Add T$-resolution.

We will employ the following characterization of tilting cotorsion pairs from \cite[Corollary 13.20]{GT}:

\begin{lemma}\label{1tilt} Let $R$ be a ring and $\mathfrak C = (\mathcal A,\mathcal B)$ be a cotorsion pair in $\rmod R$.

Let $n \geq 0$. Then $\mathfrak C$ is $n$-tilting, if and only if the class $\mathcal A$ is resolving and consists of modules of projective dimension $\leq n$, and the class $\mathcal B$ is closed under direct sums.

In particular, $\mathfrak C$ is $1$-tilting, if and only if $\mathcal A$ consists of modules of projective dimension $\leq 1$ and $\mathcal B$ is closed under direct sums.
\end{lemma}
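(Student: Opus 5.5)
We prove the two implications separately. For the forward implication, let $\mathfrak C = \mathfrak C_T = ({}^{\perp_1}\mathcal T_T, \mathcal T_T)$ with $T$ $n$-tilting.

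First, $\mathfrak C$ is hereditary, so $\mathcal A$ is resolving by \cite[Lemma 5.24]{GT}.

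Second, $\mathcal A$ consists of modules of projective dimension $\le n$. By the description recalled above, each $A \in \mathcal A$ has an $\Add T$-coresolution $0 \to A \to T_0 \to \dots \to T_n \to 0$. Every module in $\Add T$ has projective dimension $\le n$. Cut the coresolution into short exact sequences and work from the right: from $0 \to K \to T_{i} \to C \to 0$ with $\operatorname{pd}(T_i), \operatorname{pd}(C) \le n$ we get $\operatorname{pd}(K) \le \max(\operatorname{pd}(T_i), \operatorname{pd}(C)-1) \le n$. Induction gives $\operatorname{pd}(A) \le n$.

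Third, $\mathcal B$ is closed under direct sums. Since $\mathcal T_T$ is the class of modules with an $\Add T$-resolution, and a direct sum of $\Add T$-resolutions is again an $\Add T$-resolution (as $\Add T$ is closed under direct sums), this follows. The ``in particular'' clause is the case $n=1$: a class of modules of projective dimension $\le 1$ that is the left class of a cotorsion pair is automatically resolving. Indeed, it contains $\mathcal P_0$ and is closed under extensions, and by \cite[Lemma 13.48]{GT} (see Definition~\ref{small}) this suffices, as kernels of epimorphisms between modules in $\mathcal A$ are then handled by $\operatorname{Ext}^2$ vanishing, which is automatic for projective dimension $\le 1$.

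For the converse, assume $\mathcal A$ is resolving, $\mathcal A \subseteq \mathcal P_n$, and $\mathcal B$ is closed under direct sums. Then $\mathfrak C$ is hereditary. The first key step, which I expect to be the main obstacle, is to show that $\mathfrak C$ is complete. I would do this by deconstructibility. Using the Hill Lemma and the fact that $\mathcal A \subseteq \mathcal P_n$ is resolving, one shows that every module in $\mathcal A$ is filtered by $\le\kappa$-presented modules from $\mathcal A$, for a suitable cardinal $\kappa$ depending on $|R|$. This is the standard argument that $\mathcal P_n$, and resolving subclasses of it closed under the relevant filtrations, are deconstructible. Hence $\mathfrak C$ is generated by a set, so it is complete by the Eklof--Trlifaj theorem.

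Next, I would build the tilting module. Take a special $\mathcal B$-preenvelope $0 \to R \to B_0 \to A_1 \to 0$ with $B_0 \in \mathcal B$ and $A_1 \in \mathcal A$; then $B_0 \in \mathcal A \cap \mathcal B$, because $\mathcal A$ is closed under extensions. Iterate on $A_1$, and so on. Dimension shifting with $\operatorname{pd} \le n$ and heredity show that the $n$-th cokernel $A_n$ lies in $\mathcal A \cap \mathcal B$. This gives a coresolution $0 \to R \to T_0 \to \dots \to T_n \to 0$ with $T_i \in \mathcal A \cap \mathcal B$. Put $T = \bigoplus_i T_i$. Then (T1) holds as $T \in \mathcal A$, (T3) holds by construction, and (T2) holds since $\mathcal B$ is closed under direct sums and $\mathcal B \subseteq T^{\perp_\infty}$.

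Finally, I would check that $T^{\perp_\infty} = \mathcal B$. The inclusion $\mathcal B \subseteq T^{\perp_\infty}$ holds by heredity. For the converse, the same iteration shows that every $A \in \mathcal A$ has a coresolution of length $\le n$ by modules in $\mathcal A \cap \mathcal B$. One must show that $\mathcal A \cap \mathcal B = \Add T$; this uses that $\mathcal B$ is closed under direct sums, so that $\mathcal A \cap \mathcal B$ is closed under direct sums. Dimension shifting then gives $T^{\perp_\infty} \subseteq \mathcal A^{\perp_1} = \mathcal B$. Identifying $\mathcal A \cap \mathcal B$ with $\Add T$ is the second delicate point, after completeness.
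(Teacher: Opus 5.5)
Your forward implication, the ``in particular'' clause, and the construction of $T$ from special $\mathcal B$-preenvelopes are fine. The gap is the completeness step of the converse, and that step carries the whole difficulty of the lemma.

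You derive deconstructibility of $\mathcal A$ from the Hill Lemma and ``$\mathcal A\subseteq\mathcal P_n$ is resolving'' alone. That argument never uses the hypothesis that $\mathcal B$ is closed under direct sums, and the claim it is meant to prove cannot be proved in ZFC. Take $R=\mathbb Z$ and $\mathcal A={}^{\perp_1}\mathbb Z$, the class of Whitehead groups. It is the left class of a cotorsion pair, it lies in $\mathcal P_1$, and it is resolving (by your own argument for $n=1$). Yet by a theorem of Eklof and Shelah it is consistent with ZFC$+$GCH that $\mathcal A$ is not even precovering. In that model $\mathcal A$ is not deconstructible: otherwise the pair would be generated by a set and hence complete.

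The Hill Lemma applied to free resolutions only produces filtrations whose factors lie in $\mathcal P_n$; that is why $\mathcal P_n$ itself is deconstructible. The real problem is forcing the factors into $\mathcal A$, and closure of $\mathcal A$ under filtrations does not help, since it is automatic by the Eklof Lemma.

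The missing idea is the countable type theorem (Bazzoni--Eklof--Trlifaj for $n=1$, \v{S}\v{t}ov\'{\i}\v{c}ek--Trlifaj in general). For a hereditary pair with $\mathcal A\subseteq\mathcal P_n$, it shows that there is a set $\mathcal S\subseteq\mathcal A$ of countably presented modules with $\mathcal B=\mathcal S^{\perp_\infty}$. The closure of $\mathcal B$ under direct sums is used precisely there. It follows that $\mathfrak C$ is generated by a set, hence complete by the Eklof--Trlifaj theorem. This is the set-theoretic homological algebra behind \cite[Corollary 13.20]{GT}, which the paper simply cites.

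A smaller issue concerns your last step. Closure of $\mathcal A\cap\mathcal B$ under direct sums only gives $\Add T\subseteq\mathcal A\cap\mathcal B$, which is the opposite of the inclusion you need. You can bypass that identification as follows.
\begin{enumerate}
\item Let $Y\in T^{\perp_\infty}$. Condition (T3) and dimension shifting show that $\operatorname{Hom}_R(T_0,Y)\to\operatorname{Hom}_R(R,Y)$ is onto.
\item Hence the evaluation map $T^{(I)}\to Y$, with $I=\operatorname{Hom}_R(T,Y)$, is an epimorphism, and (T2) shows that its kernel lies again in $T^{\perp_\infty}$.
\item Iterating gives an exact resolution $\cdots\to T^{(I_1)}\to T^{(I_0)}\to Y\to 0$ whose terms lie in $\mathcal B$; this is where closure of $\mathcal B$ under direct sums is used.
\item Heredity together with $\mathcal A\subseteq\mathcal P_n$ gives $\operatorname{Ext}^1_R(A,Y)\cong\operatorname{Ext}^{n+1}_R(A,K_n)=0$ for each $A\in\mathcal A$, where $K_n$ is the $n$-th syzygy in this resolution.
\end{enumerate}
So $T^{\perp_\infty}=\mathcal B$, and $\mathcal A\cap\mathcal B=\Add T$ then follows a posteriori by splitting.
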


Dually, if $n \geq 0$, then $C \in \rmod R$ is \emph{$n$-cotilting} \cite{AC01,CT95} if
\begin{enumerate}
\item[$(C1)$] $\operatorname{id}(C) \leq n$.
\item[$(C2)$] $C^{\lambda}\in {^{\perp_{\infty}}C}$ for every cardinal $\lambda$.
\item[$(C3)$] There is a long exact sequence $$0\ra U_{n}\ra\cdots \ra U_{1}\ra U_{0}\ra E\ra 0$$ with $U_{i}$ $\in$ $\textrm{Prod}C$ for all $i \leq n$, and $E$ an injective cogenerator for $\rmod R$. Here Prod$C$ denotes the class of all direct summands of direct products of copies of the module $C$.
\end{enumerate}

Dually, we introduce the notions of the \emph{$n$-cotilting class} $\mathcal C _C = {}^{\perp_\infty} C$ and the complete hereditary \emph{$n$-cotilting cotorsion pair} $\mathfrak C _C = (\mathcal C _C,\mathcal C _C ^{\perp_1})$ induced by an $n$-cotilting module $C \in \rmod R$. Then \cite[Corollary 15.10]{GT} gives

\begin{lemma}\label{1cotilt} Let $R$ be a ring and $\mathfrak C = (\mathcal A,\mathcal B)$ be a cotorsion pair in $\rmod R$.

Let $n \geq 0$. Then $\mathfrak C$ is $n$-cotilting, if and only if the class $\mathcal B$ is coresolving and consists of modules of injective dimension $\leq n$, and the class $\mathcal A$ is closed under direct products.

In particular, $\mathfrak C$ is $1$-cotilting, if and only if $\mathcal B$ consists of modules of injective dimension $\leq 1$ and $\mathcal A$ is closed under direct products.
\end{lemma}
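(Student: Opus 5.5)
This is \cite[Corollary 15.10]{GT}, the dual of Lemma~\ref{1tilt}, and I would prove it by dualizing the tilting argument. Throughout, the ``in particular'' clause for $n=1$ is the special case $n=1$. It holds because any class of modules of injective dimension $\leq 1$ that is the right half of a cotorsion pair is automatically coresolving: it contains $\mathcal I_0$, it is closed under extensions, and cokernels of monomorphisms between modules of injective dimension $\leq 1$ are again of injective dimension $\leq 1$ and lie in $\mathcal A^{\perp_1}$ by the long exact sequence.

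\emph{Necessity.} Let $C$ be $n$-cotilting, $\mathcal A=\mathcal C_C={}^{\perp_\infty}C$ and $\mathcal B=\mathcal A^{\perp_1}$. I would proceed as follows.
\begin{enumerate}
\item[(a)] Show $\mathcal A$ is resolving: it is closed under extensions and kernels of epimorphisms by the long exact Ext sequence, and it contains $\mathcal P_0$. Hence $(\mathcal A,\mathcal B)$ is hereditary and $\mathcal B$ is coresolving.
\item[(b)] Bound $\operatorname{id}$ on $\mathcal B$. Dually to the description of ${}^{\perp_1}\mathcal T_T$ for tilting modules, identify $\mathcal B$ with the class of modules admitting a $\operatorname{Prod}C$-resolution $0\to U_n\to\dots\to U_0\to B\to 0$. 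Prove this by using (C3) together with completeness of $\mathfrak C_C$, which comes from the cogeneration of the cotorsion pair by $C$. Since $\operatorname{id}(C)\le n$ and direct products preserve injective dimension, dimension shifting gives $\operatorname{id}(B)\le n$.
\item[(c)] Show $\mathcal A$ is closed under direct products. This is the main obstacle, since $\operatorname{Ext}^i(\prod A_j,C)$ is not directly controlled by (C2). I would first use that cotilting modules are pure-injective. By the Ext--Tor and pure-injectivity arguments of \cite{GT}, this makes ${}^{\perp_\infty}C$ a definable class, in particular closed under direct products. If that route is too heavy, the fallback is to dimension-shift $\operatorname{Ext}^i(-,C)$ to $\operatorname{Ext}^1(-,C')$ with $C'$ a cosyzygy in $\operatorname{Prod}C$-terms, and then use (C2) on powers.
\end{enumerate}

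\emph{Sufficiency.} Assume $\mathcal B$ is coresolving with $\operatorname{id}\le n$ and $\mathcal A$ is closed under direct products. Coresolving $\mathcal B$ means the pair is hereditary, so $\mathcal A$ is resolving.
\begin{enumerate}
\item[(a)] Prove the cotorsion pair is complete. My first attempt: $\mathcal A$ is closed under products and under pure submodules; the latter should follow from $\operatorname{id}\mathcal B\le n$ by an Ext-vanishing argument. I would also check closure under direct limits. Together these make $\mathcal A$ definable, and a standard result then makes $\mathcal A$ (special) precovering, hence the pair complete by Salce's Lemma.
\item[(b)] Fix an injective cogenerator $E$ and iterate special $\mathcal A$-precovers, giving exact sequences $0\to K_{i+1}\to A_i\to K_i\to 0$ with $K_0=E$ and $A_i\in\mathcal A$. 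Each kernel $K_{i}$ for $i\ge 1$ lies in $\mathcal B$. Since $E\in\mathcal B$ and $\mathcal B$ is closed under extensions, each $A_i\in\mathcal A\cap\mathcal B$.
\item[(c)] Because $\operatorname{id}\le n$ on $\mathcal B$, dimension shifting shows the $n$-th kernel $K_n$ splits off appropriately, and may be taken in $\mathcal A\cap\mathcal B$. This gives $0\to U_n\to\dots\to U_0\to E\to 0$ with all $U_i\in\mathcal A\cap\mathcal B$.
\item[(d)] Put $C=\prod_i U_i$ and verify the axioms:
\begin{itemize}
\item (C1) holds because $C\in\mathcal B$.
\item (C2) holds because $C^\lambda\in\mathcal A$ by product closure and $C\in\mathcal B$, so $\operatorname{Ext}^{\ge1}(C^\lambda,C)=0$ by heredity.
\item (C3) is the sequence from (c).
\end{itemize}
\item[(e)] Show ${}^{\perp_\infty}C=\mathcal A$. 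The inclusion $\mathcal A\subseteq{}^{\perp_\infty}C$ is clear since $C\in\mathcal B$. For the reverse, run dimension shifting along the resolution of $E$: any $M\in{}^{\perp_\infty}C$ satisfies $\operatorname{Ext}^1(M,B)=0$ for every $B\in\mathcal B$, because each such $B$ has a $\operatorname{Prod}C$-resolution of length $\le n$ obtained from the approximations above.
\end{enumerate}
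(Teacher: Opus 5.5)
Your sufficiency argument has a genuine gap at step (a), and steps (b)--(e) all depend on it. Without completeness of $(\mathcal A,\mathcal B)$ there are no special $\mathcal A$-precovers of $E$ or of the successive kernels $K_i$, so there is no candidate module $C$.

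The route you sketch for completeness does not work. Take a pure submodule $P$ of $A\in\mathcal A$ and $B\in\mathcal B$. Heredity only yields $\operatorname{Ext}^1_R(P,B)\cong\operatorname{Ext}^2_R(A/P,B)$, and $\operatorname{id}(B)\le n$ kills the right-hand side only for $n\le 1$ (in that case $\mathcal A$ is even closed under all submodules). For $n\ge 2$ nothing controls $A/P$, so an Ext-vanishing argument of this kind gives nothing.

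You postpone closure of $\mathcal A$ under direct limits, but this is the deep content of the lemma. It essentially amounts to the theorem of Bazzoni ($n=1$) and \v{S}\v{t}ov\'{\i}\v{c}ek (general $n$) that cotilting modules are pure-injective. The remark after the lemma refers to exactly this, and the paper itself simply cites \cite[Corollary 15.10]{GT}. So the pure-injectivity/definability machinery must be brought in on the sufficiency side. It has to be in a form that applies to an abstract hereditary cotorsion pair with $\mathcal A$ closed under products and $\mathcal B\subseteq\mathcal I_n$. As it stands, this direction is not proved.

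In the necessity direction you put the difficulty in the wrong place, and one justification is wrong. Being cogenerated by a single module does not by itself make a cotorsion pair complete; for cotilting pairs completeness is again a consequence of pure-injectivity. Neither (b) nor (c) needs it:
\begin{itemize}
\item For (b): since $\operatorname{id}(C)\le n$, $\mathcal A$ contains all $n$-th syzygies. Hence $\operatorname{Ext}^{n+1}_R(M,B)\cong\operatorname{Ext}^1_R(\Omega^n M,B)=0$ for $B\in\mathcal B$.
\item For (c), product closure is elementary. Applying $\operatorname{Hom}_R(M,-)$ to the sequence in (C3) shows that each $M\in{}^{\perp_\infty}C$ is cogenerated by $C$. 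By (C2), the cokernel of the canonical map $M\to C^{\operatorname{Hom}_R(M,C)}$ again lies in ${}^{\perp_\infty}C$. Thus ${}^{\perp_\infty}C$ is the class of modules with an exact $\operatorname{Prod}C$-coresolution; the reverse inclusion follows by dimension shifting against $\operatorname{id}(C)\le n$. This class is closed under products.
\end{itemize}

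The same coresolution repairs your step (e), which is circular as written. Approximations of an arbitrary $B\in\mathcal B$ produce terms in $\mathcal A\cap\mathcal B$, whereas $\mathcal A\cap\mathcal B=\operatorname{Prod}C$ is a consequence of the result, not an input. Instead, for $M\in{}^{\perp_\infty}C$ take its $\operatorname{Prod}C$-coresolution. Since $\operatorname{Prod}C\subseteq\mathcal A$ by product closure, dimension shifting against $\operatorname{id}(B)\le n$ gives $\operatorname{Ext}^1_R(M,B)=0$, so $M\in\mathcal A$.

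Your steps (b)--(d) of the sufficiency part, and your reduction of the $n=1$ clause, are fine once completeness is available.
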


\begin{remark}\label{proofs}
Notice that the characterizations of tilting and cotilting cotorsion pairs in Lemmas \ref{1tilt} and \ref{1cotilt} do not assume their completeness. In both cases, completeness is a consequence of the closure properties of the classes $\mathcal A$ and $\mathcal B$.

Another remarkable fact is that while the statements of these lemmas are formally dual to each other, their proofs are very different. The proof of Lemma \ref{1tilt} employs set-theoretic homological algebra, while the proof of Lemma \ref{1cotilt} essentially entails showing the pure-injectivity of all cotilting modules.
\end{remark}

\subsection{Torsion pairs}

A pair of classes $(\mathcal{T}, \mathcal{F})$ in $\rmod R$ is called a \emph{torsion pair} \cite{Dickson} if the following conditions are satisfied:

\begin{enumerate}
\item[(1)] $\textrm{Hom}_{R}(T,F)=0$ for all $T\in{\mathcal{T}}$ and $F\in{\mathcal{F}}$;
\item[(2)] For each $M\in{\rmod R}$, there is a short exact sequence in $\rmod R$
$$0\ra T\ra M\ra F\ra0$$
with $T\in{\mathcal{T}}$ and $F\in{\mathcal{F}}$.
\end{enumerate}
In this case, we call $\mathcal{T}$ a \emph{torsion class} and $\mathcal{F}$ a \emph{torsion-free class}. In any torsion pair $(\mathcal{T}, \mathcal{F})$, the two class determine each other; specifically,
\begin{center}
$\mathcal{T}^{\perp_{0}}=\mathcal{F}$ and $\mathcal{T}={^{\perp_0}\mathcal{F}}$.
\end{center}
Furthermore, a class $\mathcal{C}\subseteq{\rmod R}$ is a torsion class if and only if it is closed
under quotients, extensions and direct sums. Dually, $\mathcal{C}$ is a torsion-free class if and only if it is closed
under submodules, extensions and direct products.

\begin{example}\label{tiltcotilt-tpairs} If $T$ is any $1$-tilting module, then the $1$-tilting class $\mathcal T _T$ induced by $T$ is a torsion class in $\rmod R$. Dually, if $C \in \rmod R$ is $1$-cotilting, then the $1$-cotilting class induced by $C$ is a torsion-free class in $\rmod R$, see \cite[Lemmas 14.2 and 15.21]{GT}.
\end{example}

\subsection{Gorenstein modules}

Recall that a module $M$ is \emph{Gorenstein projective} \cite{EJ95} if there is an  exact sequence $$\mathbf{P}:\cdots\ra P_1\ra P_0\ra P^0\ra P^1\ra \cdots$$ of projective right $R$-modules such that $M$ $\cong$ $\ker(P^{0}\ra P^1)$ and $\textrm{Hom}_R(\mathbf{P},Q)$ is exact for every projective module $Q$. The class of all Gorenstein projective modules will be denoted by $\mathcal{GP}$.

Note that $\mathcal{GP} \supseteq \mathcal P_0$, and $\mathcal{GP}^{\perp_\infty} = \mathcal{GP}^{\perp_1} \supseteq \mathcal P_0$, whence $\mathcal{GP}^{\perp_1} \supseteq \mathcal P$ and $\mathcal{GP} \cap \mathcal P = \mathcal P_0$.

The notion of a \emph{Gorenstein injective module} is defined dually. The class of all Gorenstein injective modules will be denoted by $\mathcal{GI}$. Dually, we have $\mathcal{GI} \supseteq \mathcal I_0$, and $^{\perp_\infty}\mathcal{GI} = {}^{\perp_1}\mathcal{G} \supseteq \mathcal I_0$, whence $^{\perp_1}\mathcal{GI} \supseteq \mathcal I$ and $\mathcal{GI} \cap \mathcal I = \mathcal I_0$.

We will make use of the following properties of the classes $\mathcal{GP}$ and $\mathcal{GI}$. The first two are proved in ZFC, while the proof of the third one requires particular large cardinal principles:

\begin{lemma}\label{cort-sar-stov} Let $R$ be a ring.

\begin{enumerate}
\item[(1)] $(\mathcal{GP},\mathcal{GP}^{\perp_1})$ is a hereditary cotorsion pair such that the class $\mathcal{GP}^{\perp_1}$ is resolving.
\item[(2)] $(^{\perp_1}\mathcal{GI},\mathcal{GI})$ is a complete perfect hereditary cotorsion pair such that the class $^{\perp_1}\mathcal{GI}$ is coresolving and closed under direct limits.
\item[(3)] The cotorsion pair $(\mathcal{GP},\mathcal{GP}^{\perp_1})$ is complete provided that either there exists a supercompact cardinal $\geq \card{R}$, or the Vop\v{e}nka Principle holds true.
\end{enumerate}
\end{lemma}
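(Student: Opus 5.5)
Each part of the lemma is essentially a known result, so the plan is to derive the elementary closure properties directly and to take the hard set-theoretic statements from the literature: Šaroch--Šťovíček for the Gorenstein injective side, and Cortés-Izurdiaga--Šaroch for the Gorenstein projective side. I am quoting these attributions from memory, and their exact statements should be checked against the sources.

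For (1), the deep input is that $\mathcal{GP} = {}^{\perp_1}(\mathcal{GP}^{\perp_1})$. I would quote this from Cortés-Izurdiaga--Šaroch. It can also be rephrased via the class of projectively coresolved Gorenstein flat modules, but I would not reprove it.

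Heredity then follows from standard facts. $\mathcal{GP}$ is resolving, and it is closed under syzygies, since kernels in the complete resolution $\mathbf P$ are again Gorenstein projective. By \cite[Lemma 5.24]{GT}, this gives $\mathcal{GP}^{\perp_1}=\mathcal{GP}^{\perp_\infty}$.

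For the resolving property of $\mathcal{GP}^{\perp_1}$, first note that $\mathcal P_0\subseteq\mathcal{GP}^{\perp_1}$. Indeed, if $G\in\mathcal{GP}$ is a syzygy in $\mathbf P$, then $\operatorname{Ext}^1_R(G,Q)=0$ for every projective $Q$, because $\operatorname{Hom}_R(\mathbf P,Q)$ is exact. Closure under extensions is automatic.

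For closure under kernels of epimorphisms, take $0\to A\to B\to C\to 0$ with $B,C\in\mathcal{GP}^{\perp_1}$, and let $G\in\mathcal{GP}$. Choose a short exact sequence $0\to G\to P\to G'\to 0$ with $P$ projective and $G'\in\mathcal{GP}$, taken from $\mathbf P$. Then $\operatorname{Ext}^1_R(G,A)\cong\operatorname{Ext}^2_R(G',A)$. This group sits in the exact sequence $\operatorname{Ext}^1_R(G',C)\to\operatorname{Ext}^2_R(G',A)\to\operatorname{Ext}^2_R(G',B)$, whose two outer terms vanish because $\mathcal{GP}^{\perp_1}=\mathcal{GP}^{\perp_\infty}$. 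Hence $\operatorname{Ext}^1_R(G,A)=0$.

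For (2), I would cite Šaroch--Šťovíček, who prove in ZFC that $\mathcal{GI}$ is an enveloping class. They also show that $({}^{\perp_1}\mathcal{GI},\mathcal{GI})$ is a perfect hereditary cotorsion pair, with left class closed under direct limits; the latter comes from the definability of the relevant classes. Perfectness gives completeness by Salce's Lemma.

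The coresolving property of ${}^{\perp_1}\mathcal{GI}$ is the exact dual of the argument in (1). Injectives lie in it by the definition of $\mathcal{GI}$. For closure under cokernels of monomorphisms, I would shift dimension using cosyzygy sequences $0\to G''\to E\to G\to 0$ with $E$ injective and $G,G''\in\mathcal{GI}$.

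For (3), completeness of $(\mathcal{GP},\mathcal{GP}^{\perp_1})$ requires showing that $\mathcal{GP}$ is deconstructible, i.e.\ filtered by a set of modules. Eklof--Trlifaj style arguments then yield special precovers. Deconstructibility is exactly where large cardinals enter: either a supercompact cardinal $\ge\card R$ or Vopěnka's Principle gives a set of generators. I would again quote Cortés-Izurdiaga--Šaroch here rather than reprove it.

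The main obstacle is the identity $\mathcal{GP}={}^{\perp_1}(\mathcal{GP}^{\perp_1})$ in ZFC, together with the deconstructibility in (3). Neither is accessible by elementary means, so the proof should rely on citing those results.
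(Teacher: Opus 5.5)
Your proposal is correct and matches the paper, which proves all three parts purely by citation: \cite[Corollary 3.4(1)]{CS} for (1), \cite[Lemmas 5.1 and 5.4, and Theorem 5.6]{SaS} for (2), and \cite[Theorem 5.2 and Proposition 5.6]{CS} together with \cite[Theorem 1.4(B)]{C} for (3). Your dimension-shifting arguments for heredity and for the (co)resolving properties are valid additions, with one slip: perfectness yields completeness via Wakamatsu's Lemma (surjective covers of an extension-closed class are special), not via Salce's Lemma.
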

\begin{proof} (1) By \cite[Corollary 3.4(1)]{CS}.

(2) By \cite[Lemmas 5.1 and 5.4, and Theorem 5.6]{SaS}.

(3) By \cite[Theorem 5.2 and Proposition 5.6]{CS} and \cite[Theorem 1.4(B)]{C}.
\end{proof}

A right $R$-module $M$ is \emph{Gorenstein flat} \cite{EJT93} if there is an exact sequence  $$\mathbf{F}:\cdots\ra F_1\ra F_0\ra F^0\ra F^1\ra \cdots$$ of flat right $R$-modules with $M\cong \textrm{ker}(F^{0}\ra F^{1})$  such that $\mathbf{F}\otimes_{R}E$ is exact for every injective left $R$-module $E$. We will denote by $\mathcal{GF}$ the class of all Gorenstein flat modules.

Following \cite{DLW,WE25}, we will call a ring $R$ \emph{right virtually Gorenstein}, if $\mathcal{GP}^{\perp_{1}} ={^{\perp_{1}}\mathcal{GI}}$. By \cite[Proposition 3.16]{WE25}, if $R$ is right virtually Gorenstein, then the class $\mathcal{GP}$ agrees with the class of all projectively coresolved Gorenstein flat right $R$-modules introduced in \cite{SaS}.

\begin{example}\label{vG} (1) Let $R$ be an Iwanaga-Gorenstein ring. Then $R$ is right virtually Gorenstein. In particular, any commutative local noetherian Gorenstein ring is virtually Gorenstein.

Indeed, there are the cotorsion pairs $(\mathcal{GP},\mathcal P)$ and $(\mathcal P,\mathcal{GI})$, and hence the cotorsion triple $(\mathcal{GP},\mathcal P,\mathcal{GI})$, in $\rmod R$. Moreover, if $R$ is \emph{$n$-Iwanaga-Gorenstein} (that is, if the injective dimension of $R$ equals $n$), then the second cotorsion pair is $n$-tilting, see \cite[Example 8.13 and Theorem 17.12]{GT}.

(2) For each $0 \leq n < \infty$, there exist commutative noetherian virtually Gorenstein rings of Krull dimension $n$ that are not Iwanaga-Gorenstein: In \cite[Example 2.15]{DLW}, a commutative local artinian virtually Gorenstein ring $R_0$ has been constructed such that $\operatorname{id}(R_0) = \infty$ (so $R_0$ is not Iwanaga-Gorenstein) and $\rmod {R_0} = \mathcal{GP}^{\perp_{1}} ={^{\perp_{1}}\mathcal{GI}}$. For example, for any field $k$, the ring $R_0 = k[x,y]/(x^2,xy,y^2)$ has these properties.

Let $1 \leq n < \infty$. If $R_n$ is any commutative noetherian local $n$-Gorenstein ring, then the ring direct product $R_0 \times R_n$ is virtually Gorenstein of Krull dimension $n$, but it is not Gorenstein.

(3) By \cite[Proposition 4.3]{BK08}, for any field $k$, the commutative artinian ring $S_0 = k[x,y,z]/(x^2,yz,y^2-xz,z^2-xy)$ is not virtually Gorenstein. So if $1 \leq n \leq \infty$ and $S_n$ is any commutative noetherian ring of Krull dimenson $n$, then the ring direct product $S_0 \times S_n$ has Krull dimension $n$, but it is not virtually Gorenstein.
\end{example}

\medskip
We refer to \cite{EJ} and \cite{GT} for further unexplained terminology and properties of the notions defined above.

\section{Balanced pairs and virtually Gorenstein rings}\label{Section3}

Let $\mathcal{X}\subseteq \rmod R$ and $M\in{\rmod R}$. An \emph{$\mathcal{X}$-resolution} $X_{\bullet}\rightarrow M$ of $M$ is a (not necessarily exact) complex $$\cdots \rightarrow X_1\rightarrow X_0\rightarrow M\rightarrow 0$$ with each $X_i\in \mathcal{X}$, which is exact when applying ${\rm Hom}_R(X,-)$ for every $X\in \mathcal{X}$; see \cite[Definition 8.1.2]{EJ}. In this case, we will say that the complex $X_{\bullet}\rightarrow M$ is \emph{${\rm Hom}_R(\mathcal{X},-)$-acyclic}. Dually, one has the notion of \emph{$\mathcal{X}$-coresolution} $M\rightarrow X^{\bullet}$ of $M$.

Recall \cite[8.1.3]{EJ} that if the class $\mathcal X$ is precovering (preenveloping), then each module possesses an $\mathcal X$-resolution ($\mathcal X$-coresolution).

The notion of a balanced pair goes back to Chen \cite[Definition 1.1]{Chen10}:

\begin{definition}\label{balanced} Let $R$ be a ring and $\mathcal F \times \mathcal G$ be a pair of classes of modules. Then $\mathcal F \times \mathcal G$ is \emph{balanced} provided that
\begin{enumerate}
\item[(1)] $\mathcal F$ is precovering and $\mathcal G$ is preenveloping.
\item[(2)] For each module $M$, there is an $\mathcal F$-resolution $F_{\bullet}\rightarrow M$ which is $\textrm{Hom}_R(-,\mathcal G)$-acyclic.
\item[(3)] For each module $N$, there is a $\mathcal G$-coresolution $N\rightarrow G^{\bullet}$ which is $\textrm{Hom}_R(\mathcal{F},-)$-acyclic.
\end{enumerate}
The balanced pair $\mathcal F \times \mathcal G$ is \emph{special} in case the class $\mathcal F$ is special precovering and $\mathcal G$ special preenveloping.
\end{definition}

\begin{example}\label{basic} (1) Let $\mathcal E$ denote the class of all free modules. Then the pairs $\mathcal P_0 \times \mathcal I _0$ and $\mathcal E \times \mathcal I _0$ are special balanced for any ring $R$.

(2) If $R$ is an Iwanaga-Gorenstein ring, then the pair $\mathcal{GP} \times \mathcal{GI}$ is special balanced by \cite[Lemmas 12.1.2 and 12.1.3]{EJ}.

(3) Let $R$ be any ring, and $\mathcal{PP}$ and $\mathcal{PI}$ denote the classes of all pure-projective and pure-injective modules, respectively. Then $\mathcal{PP}\times\mathcal{PI}$ is a balanced pair.

If $R$ is von Neumann regular, then $\mathcal{PP} = \mathcal P _0$ and $\mathcal{PI} = \mathcal I _0$, so the pair $\mathcal{PP}\times\mathcal{PI}$ is special by part (1). However, if $R$ is right noetherian, then the balanced pair $\mathcal{PP}\times\mathcal{PI}$ is special, if and only if $R$ is a \emph{right pure-semisimple ring}, that is, all modules are pure injective.

Indeed, since $\mathcal F_0 = {}^{\perp_1} \mathcal{PI}$ by \cite[5.18(i)]{GT}, Lemma \ref{filt-cofilt} below implies that if $\mathcal{PP}\times\mathcal{PI}$ is special, then (a) $\mathcal{PP}={}^{\perp_{1}}\mathcal{FPI}$ and (b) $\mathcal{PI}=\mathcal F_0 ^{\perp_1}$. Since $R$ is right noetherian, (a) implies $\mathcal{PP}=\rmod R$, hence each pure-exact sequence splits, and all modules are pure-injective. Conversely, if $R$ is right pure-semisimple, then trivially $\mathcal{PI}=\rmod R$, and also $\mathcal{PP}=\rmod R$, because each module is a direct limit, hence a (split) pure-epimorphic image of a direct sum, of finitely presented modules.
\end{example}

\begin{remark}\label{R1} The key property of balanced pairs is that they make it possible to compute cohomology in two equivalent ways, using $\mathcal F$-resolutions or $\mathcal G$-coresolutions (see \cite[Theorem 8.2.14]{EJ}). Indeed, conditions (2) and (3) of Definition \ref{balanced} say that the functor Hom$(-,-)$ is \emph{right balanced} by $\mathcal F \times \mathcal G$ in the sense of \cite[Definition 8.2.13]{EJ}.
\end{remark}

We will need the following property of balanced pairs proved in \cite[Lemma 3.1]{EPZ}:

\begin{lemma}\label{enochs} Let $\mathcal F \times \mathcal G$ be a balanced pair. A short exact sequence in $\rmod R$ is $\Hom RF{-}$-exact for each $F \in \mathcal F$ if and only if it is $\Hom R{-}G$-exact for each $G \in \mathcal G$.
\end{lemma}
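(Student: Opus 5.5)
We must prove Lemma \ref{enochs}: for a balanced pair $\mathcal F\times\mathcal G$, a short exact sequence $0\to A\to B\to C\to 0$ is $\Hom RF{-}$-exact for all $F\in\mathcal F$ if and only if it is $\Hom R{-}G$-exact for all $G\in\mathcal G$. The plan is to use the balance of Remark \ref{R1}, i.e.\ the right derived functors $\operatorname{Ext}^i_{\mathcal F}(M,N)$ computed from $\mathcal F$-resolutions of $M$ agree with those computed from $\mathcal G$-coresolutions of $N$ (\cite[Theorem 8.2.14]{EJ}). I will refer to these groups as $\operatorname{Ext}^i_{\mathcal F\mathcal G}$.

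Assume first the sequence is $\Hom RF{-}$-exact for all $F\in\mathcal F$. The first step is a horseshoe lemma. Choose $\mathcal F$-resolutions of $A$ and $C$ that are $\Hom R{-}\mathcal G$-acyclic; these exist by Definition \ref{balanced}(2). Because $\Hom RF{-}$ preserves exactness of the sequence, the $\mathcal F$-precover $F_0^C\to C$ lifts to $B$. This lets us assemble a degreewise split sequence of resolutions $0\to F^A_\bullet\to F^B_\bullet\to F^C_\bullet\to0$. Here $F^B_\bullet=F^A_\bullet\oplus F^C_\bullet$ is an $\mathcal F$-resolution of $B$; it is $\Hom R{\mathcal F}{-}$-acyclic by the usual snake/induction argument, using that the sequence stays exact under $\Hom RF{-}$. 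Next apply $\Hom R{-}G$ for $G\in\mathcal G$. Degreewise splitting gives a short exact sequence of complexes, and hence a long exact sequence in $\operatorname{Ext}^i_{\mathcal F\mathcal G}(-,G)$. At degree $0$ each resolution is $\Hom R{-}G$-acyclic, so $\operatorname{Ext}^0(M,G)=\Hom RMG$. Consequently $0\to\Hom RCG\to\Hom RBG\to\Hom RAG\to\operatorname{Ext}^1_{\mathcal F\mathcal G}(C,G)$.

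The key remaining point is to show that $\Hom RBG\to\Hom RAG$ is onto. For this, compute $\operatorname{Ext}^1_{\mathcal F\mathcal G}(C,G)$ by a $\mathcal G$-coresolution of $G$ itself, namely $0\to G\xrightarrow{=}G\to0$. This is a $\mathcal G$-coresolution that is trivially $\Hom R{\mathcal F}{-}$-acyclic. By balance, $\operatorname{Ext}^i_{\mathcal F\mathcal G}(C,G)=0$ for $i\ge1$. Hence the sequence is $\Hom R{-}G$-exact.

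The converse is dual. Build $\mathcal G$-coresolutions of $A$ and $C$ via Definition \ref{balanced}(3), and a horseshoe for $B$ using $\Hom R{-}G$-exactness to extend the preenvelope $A\to G^0_A$ over $B$. Then apply $\Hom RF{-}$, and use $\operatorname{Ext}^1_{\mathcal F\mathcal G}(F,A)=0$, computed via the trivial $\mathcal F$-resolution of $F$.

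The main obstacle is verifying carefully that the horseshoe-constructed complex for $B$ is a genuine $\mathcal F$-resolution, i.e.\ $\Hom R{\mathcal F}{-}$-acyclic, and that the derived functors are independent of the chosen resolution. The latter is the standard comparison theorem \cite[8.2]{EJ}, which I would cite. The former follows from the $3\times3$ lemma applied to the $\Hom RF{-}$ images, which are exact columns by hypothesis.
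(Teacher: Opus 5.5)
The paper gives no argument of its own here; it simply quotes \cite[Lemma 3.1]{EPZ}. Your proposal is a correct, self-contained proof along the standard lines: a relative horseshoe, the long exact sequence of $H^\ast\operatorname{Hom}_R(F_\bullet,G)$, the identification of $H^0$ with $\operatorname{Hom}_R(-,G)$ for $G\in\mathcal G$, and the vanishing of $H^1$. What it buys over the paper is that the reader need not take the cited result on trust. It can be tightened and made precise in a few places.

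First, you do not need Theorem 8.2.14 of \cite{EJ} or the coresolution $0\to G\to G\to 0$. The vanishing $H^1\operatorname{Hom}_R(F^C_\bullet,G)=0$ holds because you \emph{chose} $F^C_\bullet$ to be $\operatorname{Hom}_R(-,\mathcal G)$-acyclic by Definition \ref{balanced}(2). Hence the forward implication uses only condition (2) of Definition \ref{balanced}, and the converse only condition (3), which is slightly more than the statement claims.

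The one place where comparison is genuinely needed is $B$. Since $F^B_\bullet$ is constructed rather than chosen, you get $\operatorname{Hom}_R(B,G)\cong H^0\operatorname{Hom}_R(F^B_\bullet,G)$ only after comparing $F^B_\bullet$ with a $\operatorname{Hom}_R(-,\mathcal G)$-acyclic $\mathcal F$-resolution of $B$. The relative comparison homotopies can be taken to vanish on the augmentation term, so acyclicity transfers, as you indicate.

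Second, $\mathcal F$-resolutions need not be exact, because augmentations need not be onto. So the kernel sequences $0\to K^A_n\to K^B_n\to K^C_n$ are in general only left exact, and the module-level snake argument does not literally apply. The horseshoe induction must be run after applying $\operatorname{Hom}_R(F,-)$. There the snake lemma does give short exact sequences, and it lets you lift $F^C_{n+1}\to K^C_n$ into $K^B_n$.

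Third, $F^A_n\oplus F^C_n$ belongs to $\mathcal F$ only if $\mathcal F$ is closed under finite direct sums, which Definition \ref{balanced} does not require. This is harmless. Replacing $\mathcal F$ by its closure under finite direct sums and direct summands changes neither the $\operatorname{Hom}_R(\mathcal F,-)$-exact complexes nor the comparison theorem. The same applies dually to $\mathcal G$ in the converse.
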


Next, we will examine how balanced pairs behave with respect to usual closure properties studied in homological algebra. We start with a lemma that extends Lemma \ref{easy} from direct products of modules in $\mathcal C$ to arbitrary $\mathcal C$-cofiltered modules, and from direct sums of modules in $\mathcal C$ to arbitrary $\mathcal C$-filtered modules.

Recall that if $\mathcal C$ is a class of modules and $M \in \rmod R$, then $M$ is \emph{$\mathcal C$-filtered} (or a \emph{transfinite extension} of modules from $\mathcal C$) provided that there exists an increasing chain $( M_\alpha \mid \alpha \leq \sigma )$ consisting of submodules of $M$ such that $M_0 = 0$, $M_{\alpha + 1}/M_\alpha$ is isomorphic to an element of $\mathcal C$ for each $\alpha < \sigma$, $M_\alpha = \bigcup_{\beta < \alpha} M_\beta$ for each limit ordinal $\alpha \leq \sigma$, and $M_\sigma = M$. A class $\mathcal C$ is called \emph{filtration closed} if it contains all $\mathcal C$-filtered modules.

Dually, a module $M$ is \emph{$\mathcal C$-cofiltered} provided that there exists a chain $( M_\alpha \mid \alpha \leq \sigma )$ of modules and epimorphisms $\pi_\alpha : M_{\alpha + 1} \to M_\alpha$ ($\alpha < \sigma$) such that $M_0 = 0$, Ker$(\pi_\alpha)$ is isomorphic to an element of $\mathcal C$ for each $\alpha < \sigma$, $M_\alpha = \varprojlim_{\beta < \alpha} M_\beta$ for each limit ordinal $\alpha \leq \sigma$, and $M_\sigma = M$. A class $\mathcal C$ is called \emph{cofiltration closed} if it contains all $\mathcal C$-cofiltered modules.

For example, if $R = \mathbb Z$ and $\mathcal C = \{ \mathbb Z _p \}$ for a prime $p$, then the Pr\"{u}fer group $\mathbb Z _{p^\infty}$ is $\mathcal C$-filtered, while the $p$-adic group $\mathbb J _p$ is $\mathcal C$-cofiltered.

\begin{lemma}\label{filt-cofilt}  Let $R$ be a ring and $\mathcal C$ be a class of modules closed under direct summands.
\begin{enumerate}
\item[(1)]  Assume $\mathcal C$ is special preenveloping. Then $\mathcal C$ is the right-hand class of a complete cotorsion pair, hence it is cofiltration closed.
\item[(2)] Assume $\mathcal C$ is special precovering. Then $\mathcal C$ is the left-hand class of a complete cotorsion pair, hence it is filtration closed.
\end{enumerate}
\end{lemma}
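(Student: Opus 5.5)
We prove (1); part (2) is formally dual. Assume $\mathcal C$ is closed under direct summands and special preenveloping, and set $\mathcal A = {}^{\perp_1}\mathcal C$. The plan is to show that $(\mathcal A,\mathcal C)$ is a cotorsion pair, i.e.\ $\mathcal C = \mathcal A^{\perp_1}$. The pair is then automatically complete, since $\mathcal C$ is special preenveloping and Salce's Lemma \cite[5.20]{GT} applies. For the closure statement we invoke the standard fact that the right-hand class of any cotorsion pair, $\mathcal A^{\perp_1}$, is closed under $\mathcal A^{\perp_1}$-cofiltrations. This is the dual Eklof Lemma, available in \cite[Lemma 6.37]{GT} (or \cite[Ch.~6]{GT}).

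The inclusion $\mathcal C \subseteq \mathcal A^{\perp_1}$ is immediate from the definition of $\mathcal A$. For the reverse inclusion, let $M \in \mathcal A^{\perp_1}$. We take a special $\mathcal C$-preenvelope
\[ 0 \to M \xrightarrow{f} C \to A \to 0 \]
with $C \in \mathcal C$. By the definition of a special preenvelope, $f$ is a monomorphism and $A = \operatorname{Coker} f \in {}^{\perp_1}\mathcal C = \mathcal A$. Hence $\Ext 1RAM = 0$, so the sequence splits and $M$ is a direct summand of $C$. Closure of $\mathcal C$ under direct summands then gives $M \in \mathcal C$. So $\mathcal C = \mathcal A^{\perp_1}$.

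For (2) we argue dually. Put $\mathcal B = \mathcal C^{\perp_1}$ and let $M \in {}^{\perp_1}\mathcal B$. Take a special $\mathcal C$-precover $0 \to B \to C \to M \to 0$; its kernel $B$ lies in $\mathcal C^{\perp_1} = \mathcal B$, so the sequence splits. Then $M$ is a summand of $C$, hence $M \in \mathcal C$. Filtration closure follows from Eklof's Lemma \cite[Lemma 6.2]{GT}: ${}^{\perp_1}\mathcal B$ is closed under ${}^{\perp_1}\mathcal B$-filtrations.

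The only nontrivial ingredient is the cofiltration closure in (1), i.e.\ the dual Eklof Lemma. This needs the inverse-limit argument with the Ext long exact sequence and the vanishing of $\varprojlim^1$-type obstructions along well-ordered chains with surjective transition maps. We will quote it rather than reprove it. Note that the argument never uses that $\mathcal C$ is closed under isomorphisms beyond what summand closure provides.
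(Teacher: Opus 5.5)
Your proof is correct and follows the paper's own argument. A module in $({}^{\perp_1}\mathcal C)^{\perp_1}$ (dually, in ${}^{\perp_1}(\mathcal C^{\perp_1})$) has a special $\mathcal C$-preenvelope (dually, a special $\mathcal C$-precover) that splits, so it lies in $\mathcal C$ by summand closure; cofiltration closure then follows from the Lukas Lemma \cite[Lemma 6.37]{GT}, and filtration closure from the Eklof Lemma \cite[Lemma 6.2]{GT}.
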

\begin{proof}(1) Let $X \in  ({^{\perp_{1}}} \mathcal{C}){^{\perp_{1}}}$. By assumption, there exists a short exact sequence  $0 \to X \to Y \to Z \to 0$  with $Y \in \mathcal{C}$ and $Z \in {^{\perp_1} \mathcal{C}}$.  Since $X \in {(^{\perp_1} \mathcal{C})^{\perp_1}}$, the sequence splits, whence $X \in {\mathcal{C}}$ because $\mathcal{C}$ is closed under direct summands. So ($^{\perp_1} \mathcal{C}$, $\mathcal{C}$) is a complete cotorsion pair. By the Lukas Lemma \cite[Lemma 6.37]{GT}, $\mathcal{C}$ is cofiltration closed.

(2) The proof is dual to (1), using Eklof Lemma \cite[Lemma 6.2]{GT} in place of the Lukas one.
\end{proof}

The next lemma relates special balanced pairs to complete hereditary cotorsion triples.

\begin{lemma}\label{hered} Let $\mathcal F$ and $\mathcal G$ be classes of modules closed under direct summands. Then the following conditions are equivalent:
\begin{enumerate}
\item[(1)] The classes $\mathcal F$ and $\mathcal G$ fit in a complete hereditary cotorsion triple $(\mathcal F, \mathcal H, \mathcal G)$.
\item[(2)] $\mathcal F \times \mathcal G$ is a special balanced pair of modules such that $\mathcal F$ is closed under kernels of epimorphisms, $\mathcal G$ is closed under cokernels of monomorphisms, $\mathcal F ^{\perp_1} \supseteq \mathcal P_0$, and $^{\perp_1} \mathcal G \supseteq \mathcal I_0$.
\end{enumerate}
\end{lemma}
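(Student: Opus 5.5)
(1)$\Rightarrow$(2). Let $(\mathcal F,\mathcal H,\mathcal G)$ be a complete hereditary cotorsion triple. The closure properties follow from the cotorsion-pair structure:
\begin{itemize}
\item $\mathcal F$ is special precovering and $\mathcal G$ is special preenveloping, by completeness of the two cotorsion pairs.
\item Heredity makes $\mathcal F$ resolving, hence closed under kernels of epimorphisms, and makes $\mathcal G$ coresolving, hence closed under cokernels of monomorphisms.
\item $\mathcal F^{\perp_1}=\mathcal H$ is coresolving, since $(\mathcal H,\mathcal G)$ is hereditary. Also $\mathcal H={}^{\perp_1}\mathcal G$ contains $\mathcal P_0$, since every projective lies in ${}^{\perp_1}$ of any class.
\item Dually, $^{\perp_1}\mathcal G=\mathcal H=\mathcal F^{\perp_1}\supseteq \mathcal I_0$.
\end{itemize}
For balance, take $M$ and iterate special $\mathcal F$-precovers: $0\to K_0\to F_0\to M\to 0$, $0\to K_1\to F_1\to K_0\to 0$, and so on, with each $K_i\in\mathcal H$. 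Each short sequence has kernel in $\mathcal H=\mathcal F^{\perp_1}$, so it is $\Hom R{\mathcal F}{-}$-exact. It is also $\Hom R{-}{\mathcal G}$-exact, because $\Ext 1R{K_i}{G}=0$ as $K_i\in\mathcal H={}^{\perp_1}\mathcal G$. Splicing gives an $\mathcal F$-resolution that is $\Hom R{-}{\mathcal G}$-acyclic. Dually, special $\mathcal G$-preenvelopes have cokernels in $\mathcal H$, which gives condition (3) of Definition \ref{balanced}.

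(2)$\Rightarrow$(1). Put $\mathcal H=\mathcal F^{\perp_1}$. By Lemma \ref{filt-cofilt}, $(\mathcal F,\mathcal H)$ and $({}^{\perp_1}\mathcal G,\mathcal G)$ are complete cotorsion pairs. It then suffices to prove $\mathcal H={}^{\perp_1}\mathcal G$ and that both pairs are hereditary.

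For heredity, $\mathcal F$ contains $\mathcal P_0$: it is the left class of a cotorsion pair, or, as a special precovering class, each projective $P$ has a precover $F\to P$ that splits. Hence $\mathcal F$ is resolving, and so $(\mathcal F,\mathcal H)$ is hereditary by \cite[Lemma 5.24]{GT}. Dually, $(^{\perp_1}\mathcal G,\mathcal G)$ is hereditary.

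To show $\mathcal H\subseteq{}^{\perp_1}\mathcal G$, take $H\in\mathcal H$ and a special $\mathcal G$-preenvelope $0\to H\to G\to C\to 0$ with $C\in{}^{\perp_1}\mathcal G$. This sequence is $\Hom R{-}{\mathcal G}$-exact. By Lemma \ref{enochs} it is then $\Hom R{\mathcal F}{-}$-exact, so $\Ext 1R{F}{H}\to\Ext 1R FG$ is injective for $F\in\mathcal F$. The goal is $G\in\mathcal H$, so that $H$ and $G$ both lie in $\mathcal H$ and we get $C\in\mathcal H$ as well.

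Here the hypothesis $\mathcal F^{\perp_1}\supseteq\mathcal P_0$ is used, together with a closure argument. Take a special $\mathcal F$-precover $0\to K\to F\to G\to 0$ with $K\in\mathcal H$. This sequence is $\Hom R{\mathcal F}{-}$-exact, hence $\Hom R{-}{\mathcal G}$-exact by Lemma \ref{enochs}. Since $G\in\mathcal G$, applying $\Hom R{-}{G}$ shows the sequence splits, so $G$ is a summand of $F$ and $G\in\mathcal F$. Thus $\mathcal G\subseteq\mathcal F$, and dually $\mathcal F\subseteq\mathcal G$.

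Then $\mathcal F=\mathcal G$ is closed under both kernels of epimorphisms and cokernels of monomorphisms. It contains $\mathcal P_0$ and $\mathcal I_0$, and it lies in $\mathcal F^{\perp_1}\cap{}^{\perp_1}\mathcal G$ by the orthogonality hypotheses. Hence $\Ext1RFF=0$ for all $F\in\mathcal F$.

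The hard step is turning this into $\mathcal H={}^{\perp_1}\mathcal G$. Given $H\in\mathcal H$, I would take a special $\mathcal F$-preenvelope of $H$, available since $\mathcal F=\mathcal G$. Its cokernel lies in ${}^{\perp_1}\mathcal G$, and the preenvelope sequence is $\Hom R{\mathcal F}{-}$-exact by Lemma \ref{enochs}. Then use dimension shifting with the heredity of both pairs, and compare $\mathcal H$ and ${}^{\perp_1}\mathcal G$ via their common resolving and coresolving properties, to conclude $\Ext1R H G=0$. The reverse inclusion is dual.

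I expect this identification of the middle classes to be the main obstacle. If it fails, I would first prove that $\mathcal F\cap\mathcal H$ is a class of projective–injective-like modules and use it to conclude.
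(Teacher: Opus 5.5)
There is a genuine gap in (2)$\Rightarrow$(1). You never establish $\mathcal F^{\perp_1}={}^{\perp_1}\mathcal G$, and the route you take relies on a false claim. From $\Hom{R}{-}{G}$-exactness of the special $\mathcal F$-precover $0\to K\to F\to G\to 0$ you conclude that it splits. But $\Hom{R}{-}{G}$-exactness only says that every map $K\to G$ extends to $F$. That would split the sequence if the \emph{kernel} lay in $\mathcal G$ (extend $\mathrm{id}_K$), not the cokernel. The resulting claim $\mathcal G\subseteq\mathcal F$, and hence $\mathcal F=\mathcal G$, is false: for $R=\mathbb Z$, the classes $\mathcal F=\mathcal P_0$ and $\mathcal G=\mathcal I_0$ satisfy every condition in (2), yet $\mathbb Q\in\mathcal I_0\setminus\mathcal P_0$. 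The special $\mathcal G$-preenvelope $0\to H\to G\to C\to 0$ you start from gives only the vacuous injectivity $\Ext{1}{R}{F}{H}\to\Ext{1}{R}{F}{G}$, whose source is already $0$. The final ``dimension shifting'' step is not an argument.

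The missing idea is to apply Lemma~\ref{enochs} to a sequence whose \emph{middle} term is controlled by the hypotheses $\mathcal P_0\subseteq\mathcal F^{\perp_1}$ and $\mathcal I_0\subseteq{}^{\perp_1}\mathcal G$. This is what the paper does.
\begin{enumerate}
\item[(a)] For $A\in{}^{\perp_1}\mathcal G$, take $0\to K\to P\to A\to 0$ with $P$ projective. It is $\Hom{R}{-}{\mathcal G}$-exact because $A\in{}^{\perp_1}\mathcal G$.
\item[(b)] By Lemma~\ref{enochs}, it is then $\Hom{R}{\mathcal F}{-}$-exact. So $\Ext{1}{R}{F}{K}$ embeds in $\Ext{1}{R}{F}{P}=0$, giving $K\in\mathcal F^{\perp_1}$.
\item[(c)] Since $\mathcal F^{\perp_1}$ is coresolving and $P,K\in\mathcal F^{\perp_1}$, you get $A\in\mathcal F^{\perp_1}$.
\item[(d)] The reverse inclusion is dual. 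Use an injective copresentation $0\to B\to I\to C\to 0$ of $B\in\mathcal F^{\perp_1}$ and the fact that ${}^{\perp_1}\mathcal G$ is resolving.
\end{enumerate}

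Your (1)$\Rightarrow$(2) also has a faulty step, though the paper simply cites Chen for that direction. The vanishing $\Ext{1}{R}{K_0}{G}=0$ only makes $\Ext{1}{R}{M}{G}\to\Ext{1}{R}{F_0}{G}$ surjective. What you need is surjectivity of $\Hom{R}{F_0}{G}\to\Hom{R}{K_0}{G}$. For instance, $0\to\mathbb Z\xrightarrow{2}\mathbb Z\to\mathbb Z/2\to 0$ with $G=\mathbb Z$ has $\Ext{1}{R}{\mathbb Z}{\mathbb Z}=0$ but is not $\Hom{R}{-}{\mathbb Z}$-exact. You can repair this with heredity:
\begin{enumerate}
\item[(a)] Embed $K_0$ in an injective module $I$. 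The cokernel lies in $\mathcal H={}^{\perp_1}\mathcal G$, since $\mathcal H$ is coresolving and contains $\mathcal I_0$.
\item[(b)] Hence every map $K_0\to G$ extends to $I$, and then to $F_0$ by injectivity of $I$.
\end{enumerate}
The ``dually'' step for $\mathcal G$-coresolutions needs the dual repair, using a projective presentation of the cokernel and the fact that $\mathcal H$ is resolving.
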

\begin{proof} That (1) implies (2) was proved by Enochs, Jenda, Torrecillas and Xu; a published proof appears in \cite[Proposition 2.6]{Chen10}.

The implication $(2)\Rightarrow (1)$ is a variation of \cite[Proposition 4.6]{EPZ} for module categories: Assume (2) and let $\mathcal F$ and $\mathcal G$ have the stated closure properties. By Lemma \ref{filt-cofilt}, there are complete cotorsion pairs  $(\mathcal F, \mathcal F ^{\perp_1})$ and $(^{\perp_1} \mathcal G, \mathcal G)$. By the closure properties, both these cotorsion pairs are hereditary. Let $A \in {}^{\perp_1} \mathcal G$ and consider a short exact sequence $0 \to K \to P \to A \to 0$ with $P \in \mathcal P _0$. Let $G \in \mathcal G$. Then the short exact sequence above is $\Hom R{-}G$-exact. So it is also $\Hom RF{-}$-exact for each $F \in \mathcal F$ by Lemma \ref{enochs}. Since $\mathcal F ^{\perp_1} \supseteq \mathcal P_0$, we infer that $K \in \mathcal F ^{\perp_1}$. As the cotorsion pair $(\mathcal F, \mathcal F ^{\perp_1})$ is hereditary, $\mathcal F ^{\perp_1}$ is coresolving, whence $A \in \mathcal F ^{\perp_1}$.

The proof of the inclusion $\mathcal F ^{\perp_1} \subseteq {}^{\perp_1} \mathcal G$ is dual. Thus, $(\mathcal F, \mathcal F ^{\perp_1}, \mathcal G)$ is a complete hereditary cotorsion triple in $\rmod R$.
\end{proof}

For the particular case of the classes $\mathcal{GP}$ and $\mathcal{GI}$, we obtain

\begin{corollary}\label{virtGor} Let $R$ be any ring. Then the following three conditions are equivalent:
\begin{enumerate}
\item[(1)] $R$ is right virtually Gorenstein.
\item[(2)] $(\mathcal{GP},\mathcal{GP}^{\perp_1},\mathcal{GI})$ is a complete hereditary cotorsion triple in $\rmod R$.
\item[(3)] $\mathcal{GP} \times \mathcal{GI}$ is a special balanced pair in $\rmod R$.
\end{enumerate}

Assume that either there exists a supercompact cardinal $\geq \card{R}$, or the Vop\v{e}nka Principle holds true. Then these conditions are also equivalent to
\begin{enumerate}
\item[(4)] $\mathcal{GP} \times \mathcal{GI}$ is a balanced pair in $\rmod R$.
\end{enumerate}
\end{corollary}
\begin{proof} Assume (1). In view of parts (1) and (2) of Lemma \ref{cort-sar-stov}, and of Lemma \ref{hered}, we only have to prove that the class $\mathcal{GP}$ is special precovering. But this follows from \cite[Theorem 3.5]{SaSt}.

The implication (2) $\Rightarrow$ (3) follows by Lemma \ref{hered}.

Assume (3). Then parts (1) and (2) of Lemma \ref{cort-sar-stov} imply that condition (2) of Lemma \ref{hered} is satisfied, which in turn yields that $R$ is right virtually Gorenstein.

The final claim follows by part (3) of Lemma \ref{cort-sar-stov}.
\end{proof}

Next, we employ the balance in order to obtain further closure properties:

\begin{lemma}\label{sums} Let $R$ be a ring and $\mathcal F \times \mathcal G$ be a balanced pair in $\rmod R$.
\begin{enumerate}
\item[(1)] Assume that the class $\mathcal F ^{\perp_1}$ contains all direct sums of injective modules (e.g., assume that $R$ is right noetherian). Then $\mathcal F ^{\perp_1}$ is closed under direct sums.
\item[(2)] Assume that the class $\mathcal F^{\perp_{\infty}}$ contains all direct sums of injective modules (e.g., assume that $R$ is right noetherian). Then $\mathcal F ^{\perp_{\infty}}$ is closed under direct sums.
\item[(3)] Assume that the class $\mathcal F ^{\perp_1}$ is coresolving. Moreover, assume that $\mathcal F ^{\perp_1}$ contains all $\mathcal I _0$-filtered modules (e.g., assume that $R$ is right noetherian). Then $\mathcal F ^{\perp_1}$ is filtration closed.
\end{enumerate}
\end{lemma}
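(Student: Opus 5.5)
The plan is to test membership in $\mathcal F^{\perp_1}$ through short exact sequences ending in (sums of) injectives, and to use Lemma~\ref{enochs} to pass from $\Hom RF{-}$-exactness to $\Hom R{-}G$-exactness and back. The point is that $\Hom R{-}G$ turns direct sums into products, which preserve exactness.

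For (1), let $(M_i)_{i\in I}$ be a family in $\mathcal F^{\perp_1}$. For each $i$ take an injective envelope $0\to M_i\to E_i\to C_i\to 0$. For each $F\in\mathcal F$, the long exact sequence and $\Ext 1R{F}{M_i}=0$ show that this sequence is $\Hom RF{-}$-exact. By Lemma~\ref{enochs} it is then $\Hom R{-}G$-exact for every $G\in\mathcal G$. Taking the direct sum gives $0\to\bigoplus M_i\to\bigoplus E_i\to\bigoplus C_i\to 0$. Applying $\Hom R{-}G$ yields the product of the exact sequences $\Hom R{E_i}{G}\to\Hom R{M_i}{G}\to 0$, so the sum sequence is again $\Hom R{-}G$-exact. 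Applying Lemma~\ref{enochs} once more, it is $\Hom RF{-}$-exact. The long exact sequence for $\Hom RF{-}$ now reads $\Hom RF{\bigoplus E_i}\to\Hom RF{\bigoplus C_i}\to\Ext 1RF{\bigoplus M_i}\to\Ext 1RF{\bigoplus E_i}$. The first map is onto, and the last term vanishes by the hypothesis on direct sums of injectives. Hence $\bigoplus M_i\in\mathcal F^{\perp_1}$.

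For (2), I would argue by induction on $n\ge 1$ that $\Ext nRF{\bigoplus M_i}=0$ for every family $(M_i)$ in $\mathcal F^{\perp_\infty}$. The case $n=1$ is (1), since $\mathcal F^{\perp_\infty}\subseteq\mathcal F^{\perp_1}$ and the hypothesis of (1) holds. For the inductive step, use the same sequences $0\to M_i\to E_i\to C_i\to 0$. Dimension shifting, with $E_i$ injective, gives $\Ext nRF{C_i}\cong\Ext{n+1}RF{M_i}=0$, so every $C_i\in\mathcal F^{\perp_\infty}$. Moreover $\bigoplus E_i\in\mathcal F^{\perp_\infty}$ by hypothesis, so for $n\ge 2$ we get $\Ext nRF{\bigoplus M_i}\cong\Ext{n-1}RF{\bigoplus C_i}$. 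The right-hand side vanishes by the induction hypothesis applied to the family $(C_i)$.

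For (3), let $M$ have a filtration $(M_\alpha\mid\alpha\le\sigma)$ with factors $N_\alpha=M_{\alpha+1}/M_\alpha\in\mathcal F^{\perp_1}$. I would build, by transfinite recursion, a continuous chain of short exact sequences $0\to M_\alpha\to E_\alpha\to C_\alpha\to 0$ with the following properties: $E_\alpha$ is $\mathcal I_0$-filtered; each $E_{\alpha+1}/E_\alpha$ is an injective envelope of $N_\alpha$; unions are taken at limit steps. At successor steps I would form the pushout of $M_{\alpha+1}\leftarrow M_\alpha\to E_\alpha$ and then glue in an injective envelope of $N_\alpha$ via a second pushout (a horseshoe-type construction).

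The key claim is that each sequence in the chain is $\Hom R{-}G$-exact, that is, every map $M_\alpha\to G$ extends to $E_\alpha$. By (1) applied to a single module together with Lemma~\ref{enochs}, the factor sequences $0\to N_\alpha\to E_{\alpha+1}/E_\alpha\to\cdot\to 0$ are $\Hom R{-}G$-exact. The successor step then extends a map $M_{\alpha+1}\to G$ by first extending its restriction to $M_\alpha$ to $E_\alpha$, and then correcting on the factor. At limit steps I would build compatible extensions along the chain, so that $\Hom R{E_\alpha}G\to\Hom R{M_\alpha}G$ is onto as an inverse limit of compatible surjections.

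Once this is established, Lemma~\ref{enochs} makes $0\to M\to E_\sigma\to C_\sigma\to 0$ $\Hom RF{-}$-exact. Since $E_\sigma$ is $\mathcal I_0$-filtered, $\Ext 1RF{E_\sigma}=0$ by hypothesis, and so $\Ext 1RFM=0$. Coresolving of $\mathcal F^{\perp_1}$ is used to keep the cokernels $C_\alpha$ inside $\mathcal F^{\perp_1}$, which is needed to run the correction argument at successor steps.

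I expect the main obstacle to be exactly this successor and limit bookkeeping in (3). The $E_\alpha$ need not be injective, so extensions along the filtration cannot be obtained by injectivity, and the compatibility of the chosen extensions has to be arranged carefully.
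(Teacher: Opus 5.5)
Your proposal follows the paper's route in all three parts. Part (1) is a self-contained version of what the paper quotes from \cite[Lemma 5.1(1)]{EPZ}, and (2) is the same dimension-shifting induction. Part (3) uses the same strategy: expand the filtration into a continuous chain $0\to M_\alpha\to E_\alpha\to C_\alpha\to 0$ with $\mathcal I_0$-filtered middle terms, and move exactness back and forth with Lemma~\ref{enochs}. In (2) you do slightly better than the paper's text. You shift dimension using the hypothesis $\bigoplus_i E_i\in\mathcal F^{\perp_\infty}$, whereas the paper appeals to injectivity of $\bigoplus_i E_i$, which is only available when $R$ is right noetherian.

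In (3) your limit argument differs from the paper's. The paper applies $\Hom{R}{-}{G}$ to the whole diagram and uses exactness of $\varprojlim$ on continuous well-ordered inverse systems \cite[Lemma 6.36]{GT}. For that it needs every module in the diagram, including $M_\alpha$ and the cokernels $K_\alpha$, to lie in $\mathcal F^{\perp_1}$; this is where coresolving, and an implicit induction on $\alpha$, enter. Your recursion building compatible extensions $g_\beta\colon E_\beta\to G$ is more elementary and needs less.

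However, your successor step skips something. Before ``correcting on the factor'' you must extend $g_\alpha$ from $E_\alpha$ to some $g'\colon E_{\alpha+1}\to G$. This is possible: $E_\alpha$ is $\mathcal I_0$-filtered, hence in $\mathcal F^{\perp_1}$, so $0\to E_\alpha\to E_{\alpha+1}\to E_{\alpha+1}/E_\alpha\to 0$ is $\Hom{R}{F}{-}$-exact, hence $\Hom{R}{-}{G}$-exact by Lemma~\ref{enochs}. Then $g'|_{M_{\alpha+1}}-f$ vanishes on $M_\alpha$, factors through $N_\alpha$, and extends over $E_{\alpha+1}/E_\alpha$. Subtracting its composite with $E_{\alpha+1}\to E_{\alpha+1}/E_\alpha$ gives a map extending both $f$ and $g_\alpha$, and unions handle limits. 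With this fix the coresolving hypothesis is never used, so your stated reason for needing it (keeping the $C_\alpha$ in $\mathcal F^{\perp_1}$) is unnecessary.

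The step that is not formal is building the chain itself. A pushout can only change the left-hand term of $0\to E_\alpha\to P\to N_\alpha\to 0$. To enlarge $P$ to $E_{\alpha+1}$ with $E_{\alpha+1}/E_\alpha\cong E(N_\alpha)$, the class of $P$ in $\Ext{1}{R}{N_\alpha}{E_\alpha}$ must come from $\Ext{1}{R}{E(N_\alpha)}{E_\alpha}$. The obstruction lies in $\Ext{2}{R}{E(N_\alpha)/N_\alpha}{E_\alpha}$. It is zero when $E_\alpha$ is injective, but nothing controls it when $E_\alpha$ is merely $\mathcal I_0$-filtered.

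Over a right noetherian ring, which is the case in which the paper later uses the lemma, every $\mathcal I_0$-filtered module is injective. There the plain horseshoe works: extend $M_\alpha\to E_\alpha$ over $M_{\alpha+1}$ and set $E_{\alpha+1}=E_\alpha\oplus E(N_\alpha)$. The paper does not build the chain by hand; it quotes it from (the dual of) \cite[Lemma 7.1]{GT}. Your ``second pushout'' recipe does not replace that reference, so for general $R$ you must either invoke it as the paper does or supply an argument for the lifting.
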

\begin{proof} (1) The result follows from \cite[Lemma 5.1(1)]{EPZ}.

(2) Let $\{Y_{i}\}_{i\in{I}}$ be any family of modules in $\mathcal F^{\perp_{\infty}}$. By (1), $\textrm{Ext}_{R}^{1}(F,\oplus_{i\in{I}}Y_{i})=0$. For each $i \in I$, consider an exact sequence $0\to Y_{i}\to E_{i}\to L_{i}\to 0$ in $\rmod R$ with $E_{i}$ injective. By dimension shifting, $\{L_{i}\}_{i\in{I}}$ is also a family of modules in $\mathcal F^{\perp_{\infty}}$. So again by (1), $\textrm{Ext}_{R}^{1}(F,\oplus_{i\in{I}}L_{i}) = 0$ for each $F\in{\mathcal{F}}$. Moreover, the exactness of the sequence $0\to \oplus_{i\in{I}} Y_{i}\to \oplus_{i\in{I}} E_{i} \to \oplus_{i\in{I}} L_{i}\to 0$ and the injectivity of the module $\oplus_{i \in I} E_i$ give $\textrm{Ext}_{R}^{2}(F,\oplus_{i\in{I}}Y_{i})\cong \textrm{Ext}_{R}^{1}(F,\oplus_{i\in{I}}L_{i}) = 0$ for each $F\in{\mathcal{F}}$.

Applying the reasoning above to the family $\{L_{i}\}_{i\in{I}}$, we get $0 = \textrm{Ext}_{R}^{2}(F,\oplus_{i\in{I}}L_{i})$, and hence $\textrm{Ext}_{R}^{3}(F,\oplus_{i\in{I}}Y_{i}) = 0$. Proceeding by induction, we conclude that for all $k\geq 1$ and $F\in{\mathcal{F}}$, $\textrm{Ext}_{R}^{k}(F,\oplus_{i\in{I}}Y_{i})=0$.

(3) Let $\mathcal M = (M_\alpha \mid \alpha \leq \sigma)$ be an $\mathcal F ^{\perp_1}$-filtration of a module $M$. We will prove that $M \in \mathcal F ^{\perp_1}$. By (the dual of) \cite[Lemma 7.1]{GT}, we can expand $\mathcal M$ into a well-ordered continuous direct system of short exact sequences
$$C(\mathcal M)  \, =  \, ( 0 \to M_\alpha \to I_\alpha \to K_\alpha \to 0 \mid \alpha \leq \sigma ),$$
i.e., for each $\alpha < \sigma$, there is a commutative diagram
$$\begin{CD}
@.     0@.        0@.           0                  @.\\
@.     @VVV       @VVV   @VVV     @.\\
0@>>>  {M_\alpha}@>>> {I_\alpha}@>>>  {K_\alpha}@>>>  0\\
@.     @VVV  @VVV   @VVV     @.\\
0@>>>  {M_{\alpha +1}}@>>> {I_{\alpha +1}}@>>>  {K_{\alpha +1}}@>>>  0\\
@.     @VVV       @VVV  @VVV  @.\\
0@>>>  {\bar{M}_\alpha} @>>>   {\bar{I}_{\alpha}}@>>>    {\bar{K}_\alpha}@>>> 0\\
@.     @VVV       @VVV   @VVV    @.\\
@.     0@.        0@.    \;0.                          @.
\end{CD}$$
with exact rows and columns, and with $\bar{I}_\alpha$ injective. By our assumption on $\mathcal F ^{\perp_1}$, we have $I_\alpha \in \mathcal F ^{\perp_1}$. Since $\mathcal F ^{\perp_1}$ is coresolving, all modules in the diagram above belong to $\mathcal F ^{\perp_1}$. Because $\mathcal F \times \mathcal G$ is balanced, we infer from Lemma \ref{enochs} that for each $G \in \mathcal G$, also the diagram
$$\begin{CD}
@.     0@.        0@.           0                  @.\\
@.     @VVV       @VVV   @VVV     @.\\
0@>>>  {\Hom R{\bar{K}_\alpha}G}@>>> {\Hom R{\bar{I}_\alpha}G}@>>>  {\Hom R{\bar{M}_\alpha}G}@>>>  0\\
@.     @VVV  @VVV   @VVV     @.\\
0@>>>  {\Hom R{K_{\alpha +1}}G}@>>> {\Hom R{I_{\alpha +1}}G}@>>>  {\Hom R{M_{\alpha +1}}G}@>>>  0\\
@.     @VVV       @VVV  @VVV  @.\\
0@>>>  {\Hom R{K_{\alpha}}G}@>>> {\Hom R{I_{\alpha}}G}@>>>  {\Hom R{M_{\alpha}}G}@>>>  0\\
@.     @VVV       @VVV   @VVV    @.\\
@.     0@.        0@.    \;0.                          @.
\end{CD}$$
is commutative and has exact rows and columns. Since the functor $\varprojlim$ is exact at well-ordered continuous inverse systems of short exact sequences \cite[Lemma 6.36]{GT}, also the sequence $$0 \to \varprojlim \Hom R{K_\alpha}G \to \varprojlim \Hom R{I_\alpha}G \to \varprojlim \Hom R{M_\alpha}G \to 0$$
is exact. Moreover, there is a commutative diagram
$$\begin{CD}
0@>>>  {\Hom R{\varinjlim K_\alpha}G}@>>> {\Hom R{\varinjlim I_\alpha}G}@>>>  {\Hom RMG}@>>>  0\\
@.     @VVV  @VVV   @VVV     @.\\
0@>>>  {\varprojlim \Hom R{K_\alpha}G}@>>> {\varprojlim \Hom R{I_\alpha}G}@>>>  {\varprojlim \Hom R{M_\alpha}G}@>>>  0\\
\end{CD}$$
where the vertical maps are canonical isomorphisms (see e.g.\ \cite[\S6.3]{GT}). It follows that the sequence
$0 \to M \to \varinjlim_{\alpha < \sigma} I_\alpha \to \varinjlim_{\alpha < \sigma} K_\alpha \to 0$ is $\Hom R{-}G$ exact for each $G \in \mathcal G$. By Lemma \ref{enochs}, it is also $\Hom RF{-}$ exact for each $F \in \mathcal F$. Since $\varinjlim _{\alpha < \sigma} I_\alpha$ is $\mathcal I_0$-filtered, we have $\varinjlim _{\alpha < \sigma} I_\alpha \in \mathcal F ^{\perp_1}$, whence $M \in \mathcal F ^{\perp_1}$.
\end{proof}

\begin{corollary}\label{cor:GP-direct-limits} Let $R$ be a right noetherian ring and $\mathcal F \times \mathcal G$ be a balanced pair in $\rmod R$.
 \begin{enumerate}
\item[(1)] If $\mathcal F ^{\perp_1}$ is thick, then $\mathcal F ^{\perp_1}$ is closed under direct limits.
\item[(2)] Assume that  $(^{\perp_1}(\mathcal{F}^{\perp_1}), \mathcal{F}^{\perp_1})$ is a perfect hereditary cotorsion pair and the class ${\mathcal{F}^{\perp_1}}$ is closed under kernels of epimorphisms. Then $^{\perp_1}(\mathcal{F}^{\perp_1})$ is closed under direct limits.
\end{enumerate}
\end{corollary}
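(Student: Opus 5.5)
For (1), I would first note that over a right noetherian ring every $\mathcal I_0$-filtered module is injective. Indeed, direct sums and well-ordered continuous unions of injectives are injective over a right noetherian ring by Baer's criterion with finitely generated ideals. Also, an extension of injectives splits, so it is injective. Hence $\mathcal I_0 \subseteq \mathcal F^{\perp_1}$ already gives that $\mathcal F^{\perp_1}$ contains all $\mathcal I_0$-filtered modules.

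Since $\mathcal F^{\perp_1}$ is thick, it is closed under extensions and cokernels of monomorphisms, and it contains $\mathcal I_0$. The latter holds trivially, because $\operatorname{Ext}^1(F,E)=0$ for $E$ injective. So $\mathcal F^{\perp_1}$ is coresolving, and Lemma \ref{sums}(3) shows that $\mathcal F^{\perp_1}$ is filtration closed. By Lemma \ref{sums}(1), it is also closed under direct sums.

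To pass to direct limits, I would use the standard presentation of a direct limit $\varinjlim M_i$ as the cokernel of a map between direct sums,
$$0 \to K \to \bigoplus_{i\le j} M_i \to \bigoplus_i M_i \to \varinjlim M_i \to 0,$$
where $K$ is the kernel. This sequence is not short, so closure under cokernels of monomorphisms does not apply directly. Instead, I would use the well-known reduction of direct limits to well-ordered continuous direct systems (Iwamura's lemma), by induction on cardinality. For a well-ordered continuous system $(M_\alpha\mid \alpha<\sigma)$ in $\mathcal F^{\perp_1}$ the argument is as follows. When the transition maps are monic, the union is filtered by $M_{\alpha+1}/M_\alpha$, and these quotients lie in $\mathcal F^{\perp_1}$ by thickness (cokernels of monomorphisms). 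Filtration closure then puts the union in $\mathcal F^{\perp_1}$. In general, I would use the mapping-telescope short exact sequence
$$0\to \bigoplus_{\alpha} M_\alpha \to \bigoplus_\alpha M_\alpha \to \varinjlim M_\alpha \to 0,$$
which is exact for well-ordered (indeed any directed countable or ordinal-indexed) systems in the form $\bigoplus_{\alpha<\sigma}$ with the map $x_\alpha\mapsto x_\alpha - f_{\alpha,\alpha+1}(x_\alpha)$, at least after reducing to countable cofinality. Rather than fight this, the cleanest route is the following: $\varinjlim M_i$ is a pure-epimorphic image of $\bigoplus M_i$, and the kernel of the canonical map admits a filtration by modules built from the $M_i$. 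I expect this step, controlling the kernel, to be the main obstacle. My fallback is to use thickness, specifically closure under kernels of epimorphisms: first show that the kernel lies in $\mathcal F^{\perp_1}$ by filtration, then apply closure under cokernels of monomorphisms.

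For (2), the cotorsion pair is perfect, so by Enochs' theorem (a covering class closed under direct limits) it suffices to show that the left class $\mathcal A={}^{\perp_1}(\mathcal F^{\perp_1})$ is closed under direct limits. Hereditary together with closure under kernels of epimorphisms makes $\mathcal F^{\perp_1}$ thick. Indeed, it is closed under extensions and cokernels of monomorphisms by heredity, and it is closed under direct summands. Hence by (1), $\mathcal F^{\perp_1}$ is closed under direct limits. Now use that perfect cotorsion pairs whose right class is closed under direct limits have left class closed under direct limits: for a direct limit $M$ of modules in $\mathcal A$, take a special $\mathcal A$-cover $0\to B\to A\to M\to 0$. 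Because the pair is perfect, $\mathcal A$ is covering. Then I would apply \cite[Theorem 5.?]{GT}-type results (a covering class closed under direct limits iff ...). More concretely, I would invoke the known fact that in a perfect cotorsion pair $(\mathcal A,\mathcal B)$ with $\mathcal B$ closed under direct limits and $\mathcal A$ closed under direct sums, $\mathcal A$ is closed under direct limits. I would show this by writing $\varinjlim$ via the short exact presentation above and using heredity: $\mathcal A$ is resolving.
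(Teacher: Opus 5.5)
Your architecture is the paper's. In (1) you use Lemma \ref{sums}(3) to get filtration closure of $\mathcal F^{\perp_1}$. In (2) thickness of $\mathcal F^{\perp_1}$ lets you apply (1) and then pass to the left class. However, in (1) the step from filtrations to direct limits is never established; the paper takes it from \cite[Lemma 5.1]{SaS}. Your telescope map $x_\alpha\mapsto x_\alpha-f_{\alpha+1,\alpha}(x_\alpha)$ does not compute $\varinjlim$ for ordinals beyond $\omega$, because it ignores the relations at limit ordinals. Your ``fallback'' presupposes what is to be shown. The missing argument is short:
\begin{itemize}
\item After the reduction (induction on $|I|$) to a continuous well-ordered system $(M_\alpha,f_{\beta\alpha})_{\alpha\le\beta<\sigma}$ with limit $M$, put $B_\gamma=\operatorname{Ker}\bigl(\bigoplus_{\alpha\le\gamma}M_\alpha\to M_\gamma\bigr)$.
\item The map $(x_\alpha)\mapsto\sum_{\alpha\le\gamma}f_{\gamma\alpha}(x_\alpha)$ induces $B_{\gamma+1}/B_\gamma\cong M_\gamma$.
\item For a limit $\lambda$, continuity gives $\bigcup_{\gamma<\lambda}B_\gamma=\operatorname{Ker}\bigl(\bigoplus_{\alpha<\lambda}M_\alpha\to M_\lambda\bigr)$. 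Projection onto the $\lambda$-th coordinate then gives $B_\lambda/\bigcup_{\gamma<\lambda}B_\gamma\cong M_\lambda$, surjective again by continuity.
\item So the kernel $K=\bigcup_{\gamma}B_\gamma$ of $\bigoplus_{\alpha<\sigma}M_\alpha\to M$ is $\{M_\alpha\}$-filtered. Hence $K\in\mathcal F^{\perp_1}$, and $M\cong(\bigoplus_\alpha M_\alpha)/K\in\mathcal F^{\perp_1}$ by closure under cokernels of monomorphisms.
\end{itemize}

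In (2), the fact you invoke is exactly the paper's final step, \cite[Theorem 5.2]{AST}: a perfect cotorsion pair $(\mathcal A,\mathcal B)$ with $\mathcal B$ closed under direct limits has $\mathcal A$ closed under direct limits. It is a deep result (Enochs' conjecture for such pairs), and your proposed proof of it fails. A resolving class is closed under kernels of epimorphisms, not under cokernels of monomorphisms, so the sequence $0\to K\to\bigoplus_i A_i\to\varinjlim A_i\to 0$ gives nothing. No argument of this shape can work, because the covering property must enter essentially. For $(\mathcal P_0,\rmod{\mathbb Z})$, the right class is closed under direct limits and the left class is resolving and closed under direct sums, yet $\mathbb Q=\varinjlim\mathbb Z$ is not projective. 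Your opening sentence also runs Enochs' theorem backwards: that theorem derives the covering property from closure under direct limits. Moreover, ``it suffices to show that $\mathcal A$ is closed under direct limits'' is just the claim itself. You should cite \cite[Theorem 5.2]{AST} at this point rather than sketch a proof.
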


\begin{proof} (1) This follows by Lemma \ref{sums}(3) and \cite[Lemma 5.1]{SaS}.

(2) By the assumptions, ${\mathcal{F}^{\perp_1}}$ is thick, so part (1) yields that $\mathcal F ^{\perp_1}$ is closed under direct limits. By \cite[Theorem 5.2]{AST}, also $^{\perp_1}(\mathcal{F}^{\perp_1})$ is closed under direct limits.
\end{proof}

We arrive at another characterization of virtually Gorenstein rings:

\begin{proposition}\label{prop:GP-GI-balanced-pair}
Let $R$ be a ring and $\mathcal{GP} \times \mathcal{GI}$ be a balanced pair in $\rmod R$. Then the
following conditions are equivalent:
\begin{enumerate}
\item[(1)] $\mathcal{GP}$ is special precovering.
\item[(2)] $R$ is right virtually Gorenstein.
\item[(3)] $\mathcal{GP}\subseteq{\mathcal{GF}}$.
\item[(4)] $\mathcal{GP}^{\perp_1}$ is closed under direct limits.
\item[(5)] $\mathcal{GP}^{\perp_1}$ is closed under unions of well-ordered chains.
\end{enumerate}
\end{proposition}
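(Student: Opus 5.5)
The plan is to show that condition (2) holds outright under the balance hypothesis, and then derive the other conditions from (2) using earlier results. The argument is the one from the proof of $(2)\Rightarrow(1)$ in Lemma~\ref{hered}, but run without assuming completeness. The step that concerns me is whether that argument really uses completeness anywhere.

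First I would prove $\mathcal{GP}^{\perp_1} = {}^{\perp_1}\mathcal{GI}$ directly. I will use that $\mathcal{GP}^{\perp_1}=\mathcal{GP}^{\perp_\infty}\supseteq\mathcal P_0$ and ${}^{\perp_1}\mathcal{GI}={}^{\perp_\infty}\mathcal{GI}\supseteq\mathcal I_0$, both from Lemma~\ref{cort-sar-stov}.

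For the inclusion ${}^{\perp_1}\mathcal{GI}\subseteq\mathcal{GP}^{\perp_1}$, let $A\in{}^{\perp_1}\mathcal{GI}$ and take $0\to K\to P\to A\to 0$ with $P$ projective.
\begin{enumerate}
\item[(a)] Since $\operatorname{Ext}^1_R(A,G)=0$ for every $G\in\mathcal{GI}$, this sequence is $\Hom R{-}G$-exact. By Lemma~\ref{enochs} it is then $\Hom RF{-}$-exact for every $F\in\mathcal{GP}$.
\item[(b)] Because $\operatorname{Ext}^1_R(F,P)=0$, the long exact sequence gives $K\in\mathcal{GP}^{\perp_1}$.
\item[(c)] The segment $\operatorname{Ext}^1_R(F,P)\to\operatorname{Ext}^1_R(F,A)\to\operatorname{Ext}^2_R(F,K)$ has both outer terms zero, by heredity. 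Hence $A\in\mathcal{GP}^{\perp_1}$.
\end{enumerate}

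The reverse inclusion is dual. Let $B\in\mathcal{GP}^{\perp_1}$ and take $0\to B\to E\to C\to 0$ with $E$ injective.
\begin{enumerate}
\item[(a)] This sequence is $\Hom RF{-}$-exact for all $F\in\mathcal{GP}$, hence $\Hom R{-}G$-exact for all $G\in\mathcal{GI}$ by Lemma~\ref{enochs}.
\item[(b)] Since $\operatorname{Ext}^1_R(E,G)=0$, this gives $C\in{}^{\perp_1}\mathcal{GI}$.
\item[(c)] The segment $\operatorname{Ext}^1_R(E,G)\to\operatorname{Ext}^1_R(B,G)\to\operatorname{Ext}^2_R(C,G)$ has both outer terms zero. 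So $B\in{}^{\perp_1}\mathcal{GI}$.
\end{enumerate}
Thus (2) holds. This is the step I expect to need the most care: I must check that neither Lemma~\ref{enochs} nor the dimension shifting secretly requires special precovers.

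With (2) in hand, the remaining conditions follow from earlier results.
\begin{enumerate}
\item[(1)] Corollary~\ref{virtGor} $(1)\Rightarrow(2)$, through \cite[Theorem 3.5]{SaSt}, shows that $\mathcal{GP}$ is special precovering.
\item[(3)] \cite{WE25}, as recalled in the Introduction, gives $\mathcal{GP}\subseteq\mathcal{GF}$ for right virtually Gorenstein rings.
\item[(4)] Since $\mathcal{GP}^{\perp_1}={}^{\perp_1}\mathcal{GI}$, Lemma~\ref{cort-sar-stov}(2) shows this class is closed under direct limits.
\item[(5)] Unions of well-ordered chains are direct limits, so this follows from (4).
\end{enumerate}
Since (2) is established unconditionally, every condition holds and the five conditions are trivially equivalent.

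If the direct argument turns out to have a gap, I would fall back on two cyclic implications. For $(1)\Rightarrow(2)$: combine (1) with Lemma~\ref{cort-sar-stov}(2) to make the balanced pair special, then apply Corollary~\ref{virtGor} $(3)\Rightarrow(1)$. For $(5)\Rightarrow(2)$: first show that $\mathcal{GP}^{\perp_1}$ is filtration closed, by transfinite induction using closure under extensions and under unions of chains. Then use the diagram argument of Lemma~\ref{sums}(3) to transport exactness across Lemma~\ref{enochs}.
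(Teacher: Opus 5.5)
Your proposal is correct, and it takes a genuinely different and stronger route than the paper. The paper proves the equivalence by a cycle of implications:
\begin{itemize}
\item $(1)\Rightarrow(2)$: combine (1) with Lemma~\ref{cort-sar-stov}(2) to make the pair special balanced, then apply Corollary~\ref{virtGor}, i.e.\ the argument of Lemma~\ref{hered}.
\item $(2)\Rightarrow(3)$ by \cite{WE25}, and $(3)\Rightarrow(1)$ via projectively coresolved Gorenstein flat modules \cite{SaS}.
\item $(2)\Rightarrow(4)\Rightarrow(5)$, and $(5)\Rightarrow(1)$ by \cite[Theorem 3.5]{SaSt}.
\end{itemize}
The balance hypothesis enters only in $(1)\Rightarrow(2)$, and only after completeness of $(\mathcal{GP},\mathcal{GP}^{\perp_1})$ is available.

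You observe that the argument of Lemma~\ref{hered} never actually uses completeness. It needs only two ingredients.
\begin{itemize}
\item Lemma~\ref{enochs}, which requires only balance. For this pair you can even check it by hand: $R\in\mathcal{GP}$ and an injective cogenerator lies in $\mathcal{GI}$, so the relevant (co)resolutions are exact, and a pushout/pullback argument gives the needed splittings.
\item The relations $\mathcal P_0\subseteq\mathcal{GP}^{\perp_1}=\mathcal{GP}^{\perp_\infty}$ and $\mathcal I_0\subseteq{}^{\perp_1}\mathcal{GI}={}^{\perp_\infty}\mathcal{GI}$. These hold for every ring by elementary dimension shifting and are recorded in the preliminaries.
\end{itemize}
So the step you were worried about is sound and your fallback is unnecessary. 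All of (1)--(5) hold under the hypothesis, and the equivalence is immediate.

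What your route buys is a stronger conclusion. Balance of $\mathcal{GP}\times\mathcal{GI}$ forces right virtual Gorensteinness for every ring, in ZFC. As a byproduct, the noetherian hypothesis in Theorem~\ref{cor-GP-virtually-Gor} and the set-theoretic hypothesis for condition (4) of Corollary~\ref{virtGor} become superfluous. What the paper's route buys is that every implication except $(1)\Rightarrow(2)$ holds without assuming balance. Those implications therefore record genuine relations among these conditions for arbitrary rings.
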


\begin{proof} (1) $\Rightarrow$ (2) follows by Lemma \ref{cort-sar-stov} and Corollary \ref{virtGor}.

$(2)\Rightarrow(3)$ follows from \cite[Remark 3.17]{WE25}.

$(3)\Rightarrow(1)$. Since $\mathcal{GP}\subseteq{\mathcal{GF}}$, every Gorenstein projective module is projectively coresolved Gorenstein flat. The converse always holds by \cite[Theorem 4.4]{SaS}, so $\mathcal{GP}$ is exactly the class of all projectively coresolved Gorenstein flat right $R$-modules, and therefore $\mathcal{GP}$ is special precovering by \cite[Theorem 4.9]{SaS}.

$(2)\Rightarrow(4)$. Assume (2). Then the class $\mathcal{GP}^{\perp_1} = {}^{\perp_1}\mathcal{GI}$ is closed under direct limits by
part (2) of Lemma \ref{cort-sar-stov}.

The implication $(4)\Rightarrow(5)$ is clear.

$(5)\Rightarrow(1)$. By Lemma \ref{cort-sar-stov}(1), $(\mathcal{GP},\mathcal{GP}^{\perp_1})$ is a hereditary cotorsion pair. By \cite[Theorem 3.5]{SaSt}, (5) implies that this cotorsion pair is complete, that is, (1) holds true.
\end{proof}

Now we can prove that among right noetherian rings, the virtually Gorenstein ones are exactly those for which the pair $\mathcal{GP} \times \mathcal{GI}$ is balanced. In contrast with Corollary \ref{virtGor}, no extra set-theoretic assumptions are needed in this setting:

\begin{theorem}\label{cor-GP-virtually-Gor} Let $R$ be a right noetherian ring. Then $\mathcal{GP} \times \mathcal{GI}$ is a balanced pair in $\rmod R$, if and only if $R$ is right virtually Gorenstein.
\end{theorem}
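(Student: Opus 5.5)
The implication from right virtually Gorenstein to balance holds for any ring. If $R$ is right virtually Gorenstein, Corollary \ref{virtGor} ((1)$\Rightarrow$(3)) shows that $\mathcal{GP}\times\mathcal{GI}$ is a special balanced pair, and in particular a balanced pair. So the work is in the converse. For that direction I assume $R$ is right noetherian and $\mathcal{GP}\times\mathcal{GI}$ is balanced. By Proposition \ref{prop:GP-GI-balanced-pair} it suffices to verify condition (4) or (5) there: that $\mathcal{GP}^{\perp_1}$ is closed under direct limits, or under unions of well-ordered chains.

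I will get this from Corollary \ref{cor:GP-direct-limits}(1) with $\mathcal F=\mathcal{GP}$. That result requires $\mathcal F^{\perp_1}$ to be thick. By Lemma \ref{cort-sar-stov}(1), $(\mathcal{GP},\mathcal{GP}^{\perp_1})$ is a hereditary cotorsion pair, so $\mathcal{GP}^{\perp_1}$ is coresolving. It is therefore closed under extensions, cokernels of monomorphisms and direct summands. The same lemma states that $\mathcal{GP}^{\perp_1}$ is resolving, so it is also closed under kernels of epimorphisms. Hence $\mathcal{GP}^{\perp_1}$ is thick.

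Corollary \ref{cor:GP-direct-limits}(1) then gives that $\mathcal{GP}^{\perp_1}$ is closed under direct limits. Proposition \ref{prop:GP-GI-balanced-pair} ((4)$\Rightarrow$(2)) now yields that $R$ is right virtually Gorenstein. No set-theoretic hypothesis enters, because the completeness of $(\mathcal{GP},\mathcal{GP}^{\perp_1})$ is obtained via \cite[Theorem 3.5]{SaSt} inside that proposition rather than from Lemma \ref{cort-sar-stov}(3).

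The one point to check carefully is the right noetherian input hidden in Corollary \ref{cor:GP-direct-limits}. That corollary relies on Lemma \ref{sums}(3), which needs $\mathcal{GP}^{\perp_1}$ to contain all $\mathcal I_0$-filtered modules. Over a right noetherian ring, $\mathcal I_0$ is closed under direct sums and extensions, hence $\mathcal I_0$-filtered modules are injective. Since $\mathcal{GP}^{\perp_1}\supseteq\mathcal I_0$, this hypothesis holds. Lemma \ref{sums}(3) also needs $\mathcal{GP}^{\perp_1}$ to be coresolving, which was established above. I expect this verification of the hypotheses to be the only real obstacle; the rest is assembling earlier results.
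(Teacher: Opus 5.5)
Your proposal is correct and follows the paper's proof essentially verbatim. Both arguments reduce, via Corollary~\ref{virtGor} and Proposition~\ref{prop:GP-GI-balanced-pair}, to showing that $\mathcal{GP}^{\perp_1}$ is closed under direct limits, obtain thickness of $\mathcal{GP}^{\perp_1}$ from Lemma~\ref{cort-sar-stov}(1), and conclude by Corollary~\ref{cor:GP-direct-limits}(1); your extra check that $\mathcal I_0$-filtered modules are injective over a right noetherian ring (each step splits, so the module is a direct sum of injectives) just makes explicit the hypothesis of Lemma~\ref{sums}(3) that the paper leaves implicit.
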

\begin{proof} In view of Corollary \ref{virtGor}, it suffices to prove the ``only if'' part, by showing that the class $\mathcal{GP}$ is special precovering when $R$ is right noetherian. By Proposition \ref{prop:GP-GI-balanced-pair}, this entails showing that $\mathcal{GP}^{\perp_1}$ is closed under direct limits. However, the class $\mathcal{GP}^{\perp_1}$ is resolving (and coresolving) by Lemma \ref{cort-sar-stov}(1), so it is thick, and the claim follows by Lemma \ref{cor:GP-direct-limits}(1).
\end{proof}

\begin{remark}\label{remzad}
Theorem \ref{cor-GP-virtually-Gor} was known to hold for all commutative noetherian rings $R$ of finite Krull dimension. In that case, it was proved (in a different manner) in \cite[Theorem 3.5]{ZAD}. Note that, by Example \ref{vG}(3), for each $0 \leq n \leq \infty$, there exist noetherian rings of Krull dimension $n$ that are not virtually Gorenstein; hence, the previous result implies that $\mathcal{GP} \times \mathcal{GI}$ is not a balanced pair for such rings.
\end{remark}

The following result provides a perfect Gorenstein counterpart to a theorem of Enochs \cite{Enochs15}, which establishes that the class of flat left $R$-modules cannot, in general, form the left part of a balanced pair.

\begin{proposition}\label{prop:GF-balanced-pair}
Let $R$ be a right noetherian and right virtually Gorenstein ring. Then $\mathcal{GF} \times \mathcal{L}$ is a balanced pair for some class $\mathcal{L}$ in $\rmod R$ if and only if $R$ is right perfect.
\end{proposition}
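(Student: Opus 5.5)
The plan is to prove the two implications separately. The ``if'' part should reduce to Theorem~\ref{cor-GP-virtually-Gor}. The ``only if'' part should use the closure properties of Lemma~\ref{sums} to force a cotorsion-type condition on flat modules that characterizes right perfect rings.

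\emph{Sufficiency.} Assume $R$ is right perfect. Then $\mathcal F_0=\mathcal P_0$, so every Gorenstein flat module has a totally acyclic complex of projectives around it. I would invoke the known fact that over a right perfect ring the projectively coresolved Gorenstein flat modules exhaust $\mathcal{GF}$. The fact I have in mind comes from the Šaroch--Šťovíček theory of $\mathcal{PGF}$ modules: a $\mathcal{GF}$-complex of projectives is $\operatorname{Hom}(-,\mathcal P_0)$-exact because $\mathcal P_0=\mathcal F_0$ consists of cotorsion modules. Since $R$ is right virtually Gorenstein, $\mathcal{PGF}=\mathcal{GP}$ by \cite[Proposition 3.16]{WE25}. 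Hence $\mathcal{GF}=\mathcal{GP}$. As $R$ is right noetherian and right virtually Gorenstein, Theorem~\ref{cor-GP-virtually-Gor} gives that $\mathcal{GF}\times\mathcal{GI}=\mathcal{GP}\times\mathcal{GI}$ is balanced, so we may take $\mathcal L=\mathcal{GI}$.

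\emph{Necessity.} Assume $\mathcal{GF}\times\mathcal L$ is balanced. The argument proceeds in three steps.
\begin{enumerate}
\item Since $R$ is right noetherian, direct sums of injectives are injective and hence lie in $\mathcal{GF}^{\perp_1}$. Lemma~\ref{sums}(1) then shows that $\mathcal{GF}^{\perp_1}$ is closed under direct sums.
\item Recall that $(\mathcal{GF},\mathcal{GF}^{\perp_1})$ is a complete hereditary cotorsion pair and that $\mathcal{GF}^{\perp_1}$ contains all flat cotorsion modules (Šaroch--Šťovíček). Also $\mathcal F_0\subseteq\mathcal{GF}$, so $\mathcal{GF}^{\perp_1}\subseteq\mathcal F_0^{\perp_1}$, the class of cotorsion modules.
\item Combining the first two steps, every direct sum of flat cotorsion modules is cotorsion. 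In particular, if $C$ is a flat cotorsion module, then the flat module $C^{(\mathbb N)}$ is cotorsion, i.e.\ $C$ is $\Sigma$-cotorsion.
\end{enumerate}

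The remaining, and main, step is to pass from ``all direct sums of flat cotorsion modules are cotorsion'' to right perfectness. I would first try the following route. Every flat module $F$ embeds purely into a flat cotorsion module, for instance its flat cotorsion envelope or a product of copies of a suitable flat pure-injective module. One would then use the Guil Asensio--Herzog theory of $\Sigma$-cotorsion flat modules: a flat $\Sigma$-cotorsion module has the descending chain condition on cyclically presented (Mittag-Leffler type) submodules. Applying this to a flat cotorsion module containing $R_R$ purely should yield the descending chain condition on principal left ideals. By Bass's Theorem~P, this means $R$ is right perfect.

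If that route proves awkward, the fallback is to show directly that $R^{(\mathbb N)}$ is cotorsion. Every flat module would then be cotorsion, since it is a pure quotient of a free module and the pure sequence is $\operatorname{Hom}(\mathcal F_0,-)$-split after a Bass-module argument. The result that every flat module is cotorsion if and only if $R$ is right perfect then finishes the proof. This final passage, from the closure of $\mathcal{GF}^{\perp_1}$ under sums to cotorsionness of free modules, is where I expect the real difficulty to lie.
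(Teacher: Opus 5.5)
Your sufficiency argument and the first three steps of your necessity argument are the paper's proof. Lemma~\ref{sums}(1) makes $\mathcal{GF}^{\perp_1}$ closed under direct sums, and $\mathcal{GF}\cap\mathcal{GF}^{\perp_1}$ is the class of flat cotorsion modules, so flat cotorsion modules are closed under direct sums. (A small correction in the sufficiency part: over a right perfect ring the defining flat complex of a Gorenstein flat module already consists of projectives, so $\mathcal{GF}=\mathcal{PGF}$ holds by definition. The $\Hom{R}{-}{\mathcal P_0}$-exactness you invoke proves $\mathcal{GF}\subseteq\mathcal{GP}$ instead; the conclusion is unaffected.)

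\textbf{The gap.} The genuine gap is the last step, which you leave at ``should yield''. The paper does not prove it either: it cites Guil Asensio--Herzog \cite[Theorem 19]{GH}, applied to the closure of flat cotorsion modules under direct sums. Drop your fallback route. If $R^{(\mathbb N)}$ were cotorsion, every Bass module would split off its free presentation and $R$ would be right perfect at once. So proving $R^{(\mathbb N)}$ cotorsion is equivalent to the goal, and nothing in the hypotheses puts $R$ into $\mathcal{GF}^{\perp_1}$ a priori. The detour through ``every flat module is cotorsion'' adds nothing.

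\textbf{How your first route closes it.} Your first route works once the chain condition is stated correctly. It concerns the subgroups $Ca$ with $a\in R$, not ``cyclically presented submodules''.
\begin{enumerate}
\item Take a special cotorsion preenvelope $0\to R\to C\to F'\to 0$. Since $F'$ is flat, $C$ is flat cotorsion and $R\subseteq C$ is pure. By your step 3, $C^{(\mathbb N)}$ is cotorsion.
\item For $a_1,a_2,\dots\in R$, let $F=\varinjlim(R\xrightarrow{a_1\cdot}R\xrightarrow{a_2\cdot}\cdots)$ be the Bass module. Its presentation $0\to R^{(\mathbb N)}\to R^{(\mathbb N)}\to F\to 0$ gives $0=\Ext{1}{R}{F}{C^{(\mathbb N)}}\cong\varprojlim^1$ of the direct sum of countably many copies of the tower $C\xleftarrow{\cdot a_1}C\xleftarrow{\cdot a_2}\cdots$. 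The image of the $n$th term in $C$ is $Ca_n\cdots a_1$.
\item Suppose these images did not stabilize. Pass to a cofinal subtower with strictly decreasing images. Pick $x_n$ in the $n$th term whose image in $C$ lies outside the $(n+1)$st image, and place $x_n$ in the $n$th coordinate.
\item This element of $\prod_n C^{(\mathbb N)}$ is not in the image of the map defining $\varprojlim^1$. Indeed, for a putative preimage, look at a level-$0$ coordinate $m$ outside its finite support. It would force the image of $x_m$ into the $(m+1)$st image, a contradiction.
\item Hence $Ca_1\supseteq Ca_2a_1\supseteq\cdots$ stabilizes. Purity gives $R\cap Ca=Ra$, so $Ra_1\supseteq Ra_2a_1\supseteq\cdots$ stabilizes. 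This is the descending chain condition on principal left ideals, and Bass's Theorem P yields right perfectness.
\end{enumerate}
Written out this way, your argument is a self-contained substitute for the citation of \cite{GH}.
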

\begin{proof} For the ``if'' part, we assume that $R$ is right perfect. Then $\mathcal{GF}$ coincides with the class of projectively coresolved Gorenstein flat right $R$-modules. Since $R$ is a right virtually Gorenstein ring, it follows from \cite[Proposition 3.16]{WE25} that $\mathcal{GP}$ is exactly the class of projectively coresolved Gorenstein flat right $R$-modules, and therefore we have $\mathcal{GP}=\mathcal{GF}$. Now, the statement follows by Theorem \ref{cor-GP-virtually-Gor}.

For the ``only if'' part, we assume that $\mathcal{GF} \times \mathcal{L}$ is a balanced pair for some class $\mathcal{L}$ in $\rmod R$. Since $R$ is right noetherian, it follows from Lemma \ref{sums}(1) that $\mathcal{GF}^{\perp_1}$ is closed under direct sums. The class $\mathcal{GF}$ is also closed under direct sums. Now, note that $\mathcal{GF}\cap\mathcal{GF}^{\perp_1}$ is exactly the class of all flat and cotorsion modules by \cite[Corollary 4.12]{SaS}. Hence, by a result of Guil Asensio and Herzog \cite[Theorem 19]{GH}, $R$ is right perfect.
\end{proof}

Dually to Lemma \ref{sums}, we obtain

\begin{lemma}\label{prods} Let $R$ be a ring and $\mathcal F \times \mathcal G$ be a balanced pair in $\rmod R$.
\begin{enumerate}
\item[(1)] Assume that the class $^{\perp_1}\mathcal G$ contains all direct products of projective modules. Then $^{\perp_1}\mathcal G$ is closed under direct products.
\item[(2)] Assume that the class $^{\perp_{\infty}}\mathcal G$ contains all direct products of projective modules. Then $^{\perp_{\infty}}\mathcal G$ is closed under direct products.
\item[(3)] Assume that the class $^{\perp_1}\mathcal G$ is resolving. Moreover, assume that $^{\perp_1}\mathcal G$ contains all $\mathcal P _0$-cofiltered modules. Then $^{\perp_1}\mathcal G$ is cofiltration closed.
\end{enumerate}
\end{lemma}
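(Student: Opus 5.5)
The plan is to dualize the proof of Lemma \ref{sums} step by step, using Lemma \ref{enochs} as the engine throughout.

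For (1), take a family $\{X_i\}_{i\in I}$ in $^{\perp_1}\mathcal G$. For each $i$ choose a short exact sequence $0\to K_i\to P_i\to X_i\to 0$ with $P_i$ projective; this sequence is $\Hom R{-}G$-exact for every $G\in\mathcal G$. By Lemma \ref{enochs} it is also $\Hom RF{-}$-exact for every $F\in\mathcal F$. Since $\Hom RF{-}$ commutes with products, the product sequence $0\to\prod K_i\to\prod P_i\to\prod X_i\to 0$ is again $\Hom RF{-}$-exact. Applying Lemma \ref{enochs} once more, it is $\Hom R{-}G$-exact. By hypothesis $\prod P_i\in{}^{\perp_1}\mathcal G$, so the long exact sequence gives $\Ext 1R{\prod X_i}G=0$. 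This is the dual of \cite[Lemma 5.1(1)]{EPZ}.

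For (2), I would use dimension shifting exactly as in Lemma \ref{sums}(2). Take syzygies $0\to K_i\to P_i\to X_i\to 0$; the $K_i$ lie in $^{\perp_\infty}\mathcal G$. Part (1), applied with the weaker hypothesis, gives $\Ext 1R{\prod X_i}G=0$ and $\Ext 1R{\prod K_i}G=0$. Then
\[
\Ext 2R{\prod X_i}G\cong\Ext 1R{\prod K_i}G,
\]
because $\prod P_i\in{}^{\perp_\infty}\mathcal G$. Induction on the degree finishes the argument.

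For (3), start from a $^{\perp_1}\mathcal G$-cofiltration $(M_\alpha\mid\alpha\le\sigma)$ of $M$. I would expand it, by the dual construction to \cite[Lemma 7.1]{GT}, into a well-ordered continuous inverse system of short exact sequences $0\to K_\alpha\to P_\alpha\to M_\alpha\to 0$. The connecting kernels $\bar P_\alpha$ should be projective, so that each $P_\alpha$ is $\mathcal P_0$-cofiltered and hence in $^{\perp_1}\mathcal G$. Because the class is resolving, all terms of the $3\times 3$ diagrams lie in $^{\perp_1}\mathcal G$, so every row and column is $\Hom R{-}G$-exact. By Lemma \ref{enochs}, every row and column is then $\Hom RF{-}$-exact.

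Applying $\Hom RF{-}$ produces a continuous inverse system of short exact sequences of abelian groups. Taking the inverse limit is exact here by \cite[Lemma 6.36]{GT}, and $\Hom RF{\varprojlim -}\cong\varprojlim\Hom RF{-}$. Hence $0\to\varprojlim K_\alpha\to\varprojlim P_\alpha\to M\to 0$ is $\Hom RF{-}$-exact. By Lemma \ref{enochs} it is $\Hom R{-}G$-exact, and since $\varprojlim P_\alpha$ is $\mathcal P_0$-cofiltered, $M\in{}^{\perp_1}\mathcal G$.

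The main obstacle is the dual of \cite[Lemma 7.1]{GT}: building the continuous inverse system of projective presentations compatibly, with projective kernels $\bar P_\alpha$ and with continuity at limit steps. I would construct it by transfinite recursion. At successor steps, apply the horseshoe lemma to $0\to\bar M_\alpha\to M_{\alpha+1}\to M_\alpha\to 0$, choosing the presentation of $M_{\alpha+1}$ as $\bar P_\alpha\oplus P_\alpha$. At limit steps, take inverse limits. Surjectivity at limits must be checked; it holds because all the maps $P_{\alpha+1}\to P_\alpha$ are split epimorphisms.
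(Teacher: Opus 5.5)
Your proposal follows the paper's route. The paper gives no separate argument for this lemma and only says it is dual to Lemma~\ref{sums}. Your parts (1)--(3) are exactly that dualization: the fact that $\Hom RF{-}$ preserves products and inverse limits replaces the fact, used there, that $\Hom R{-}G$ turns direct sums and direct limits into products and inverse limits. Parts (1) and (2) are correct as written.

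In (3), the reason you give for surjectivity at limit steps is not the right one. Split surjectivity of the maps $P_{\beta+1}\to P_\beta$ does not by itself make $\varprojlim_{\beta<\alpha}P_\beta\to\varprojlim_{\beta<\alpha}M_\beta$ onto. For instance, take $P_n=\mathbb Z$ with identity transition maps, mapped canonically onto $M_n=\mathbb Z/p^n\mathbb Z$; the limit map $\mathbb Z\to\mathbb J_p$ is not surjective.

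What your horseshoe construction actually supplies is that $\bar P_\beta\to\bar M_\beta$ is onto, equivalently (by the snake lemma) that $K_{\beta+1}\to K_\beta$ is onto. Given a thread $(m_\beta)$, you then lift it by transfinite recursion:
\begin{enumerate}
\item[(a)] At a successor step, pick $q_\beta\in\bar P_\beta$ mapping to $m_{\beta+1}-\lambda_\beta(p_\beta)\in\bar M_\beta$, where $\lambda_\beta\colon P_\beta\to M_{\beta+1}$ is the lift used in the horseshoe, and put $p_{\beta+1}=(q_\beta,p_\beta)$.
\item[(b)] At limit steps, use continuity of $(P_\beta)$ and $(M_\beta)$.
\end{enumerate}
This is the argument behind \cite[Lemma 6.36]{GT}.

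There is a second point in (3). Resolvingness alone does not put $M_\alpha$ into ${}^{\perp_1}\mathcal G$ for limit $\alpha<\sigma$. You need this for the rows $0\to K_\alpha\to P_\alpha\to M_\alpha\to 0$ to be $\Hom R{-}G$-exact. The fix is to run the whole argument by induction on the length $\sigma$ of the cofiltration. The paper leaves this point equally implicit in Lemma~\ref{sums}(3).
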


By the Eklof Lemma \cite[Lemma 6.2]{GT}, the class $^{\perp_1} \mathcal C$ is filtration closed for each class of modules $\mathcal C$. This is not necessarily the case of the class $\mathcal C ^{\perp _1}$ (which need not even be closed under direct sums). In this sense, Lemma \ref{sums} gives a considerable restriction on the class $\mathcal F$.

Note however that for each tilting module $T$, the tilting class $T^{\perp_\infty}$ is filtration closed (see e.g.\ \cite[Lemma 13.27]{GT}). For each $n \geq 0$, let $\mathcal P_n$ and $\mathcal I_n$ denote the classes of all modules of projective and injective dimension $\leq n$, respectively. Then $(\mathcal P_n,\mathcal P_n ^\perp)$ and $(^\perp \mathcal I_n,\mathcal I_n)$ are complete hereditary cotorsion pairs (see e.g.\ \cite[Theorems 8.7 and 8.10]{GT}), whence Lemmas \ref{sums} and \ref{prods} make it possible to rephrase the characterizations of tilting and cotilting cotorsion pairs provided by Lemmas \ref{1tilt} and \ref{1cotilt} in the setting of balanced pairs as follows:

\begin{corollary}\label{tilt} Let $R$ be a ring, $n \geq 0$, and $\mathcal F \times \mathcal G$ be a balanced pair in $\rmod R$.

Then $(^{\perp_1}(\mathcal{F}^{\perp_{\infty}}), \mathcal{F}^{\perp_{\infty}})$ is an $n$-tilting cotorsion pair, if and only if $\mathcal{F} \subseteq \mathcal P _n$ and $\mathcal F ^{\perp_{\infty}}$ contains all direct sums of injective modules.
\end{corollary}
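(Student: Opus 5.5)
We apply Lemma \ref{1tilt} to the cotorsion pair $\mathfrak C = (\mathcal A,\mathcal B)$ with $\mathcal B = \mathcal F^{\perp_\infty}$ and $\mathcal A = {}^{\perp_1}(\mathcal F^{\perp_\infty})$, and use Lemma \ref{sums}(2) to supply closure of $\mathcal B$ under direct sums. One preliminary point must be settled first: $\mathfrak C$ is actually a cotorsion pair, i.e.\ $\mathcal A^{\perp_1} = \mathcal B$. The class $\mathcal F^{\perp_\infty}$ is closed under cokernels of monomorphisms and contains $\mathcal I_0$, so it is closed under cosyzygies. We expect that if $X \in \mathcal A^{\perp_1}$, then the cosyzygy argument, combined with $\mathcal F \subseteq \mathcal A$ and hereditarity of $\mathcal A$, gives $X \in \mathcal B$. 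We will treat this as implicit in the statement, as the corollary's phrasing presupposes.

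For the ``if'' direction, assume $\mathcal F \subseteq \mathcal P_n$ and that $\mathcal F^{\perp_\infty}$ contains all direct sums of injective modules. Lemma \ref{sums}(2) then shows that $\mathcal B = \mathcal F^{\perp_\infty}$ is closed under direct sums. Next, $\mathcal F \subseteq \mathcal P_n$ gives $\mathcal P_n^{\perp} \subseteq \mathcal F^{\perp_\infty} = \mathcal B$, since $\mathcal P_n^{\perp_1} = \mathcal P_n^{\perp_\infty}$ because $\mathcal P_n$ is resolving. Taking left orthogonals, and using that $(\mathcal P_n,\mathcal P_n^\perp)$ is a cotorsion pair, we get $\mathcal A = {}^{\perp_1}\mathcal B \subseteq {}^{\perp_1}(\mathcal P_n^\perp) = \mathcal P_n$. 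It remains to see that $\mathcal A$ is resolving. Since $\mathcal B$ is coresolving (closed under extensions, cokernels of monomorphisms, and containing $\mathcal I_0$), the cotorsion pair $\mathfrak C$ is hereditary by \cite[Lemma 5.24]{GT}, so $\mathcal A$ is resolving. Lemma \ref{1tilt} now yields that $\mathfrak C$ is $n$-tilting.

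For the ``only if'' direction, assume $\mathfrak C$ is $n$-tilting. Lemma \ref{1tilt} gives $\mathcal A \subseteq \mathcal P_n$ and that $\mathcal B$ is closed under direct sums. Since $\mathcal I_0 \subseteq \mathcal F^{\perp_\infty} = \mathcal B$, every direct sum of injectives lies in $\mathcal B$. Finally, $\mathcal F \subseteq {}^{\perp_1}(\mathcal F^{\perp_\infty}) = \mathcal A \subseteq \mathcal P_n$.

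The main obstacle is the preliminary identification of $\mathcal A^{\perp_1}$ with $\mathcal F^{\perp_\infty}$. If this fails in general, we would read the statement as asserting that $\mathcal B$ is the right-hand class of a tilting cotorsion pair. In the ``if'' direction we would then argue directly that $\mathcal B$ is a tilting class: it is closed under direct sums by Lemma \ref{sums}(2), it equals $\mathcal S^{\perp_\infty}$ for $\mathcal S = \mathcal F \subseteq \mathcal P_n$, and being of this form and closed under sums it is $n$-tilting. Everything else is routine bookkeeping with Lemmas \ref{1tilt} and \ref{sums}.
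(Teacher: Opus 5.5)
Your proposal is correct and follows the paper's route: the paper obtains the corollary by combining Lemma~\ref{1tilt} with Lemma~\ref{sums}(2) and the cotorsion pair $(\mathcal P_n,\mathcal P_n^{\perp})$, exactly as you do in both directions. The preliminary point you hedge on is not an obstacle, because $\mathcal F^{\perp_\infty}=\mathcal S^{\perp_1}$ where $\mathcal S$ is the class of all syzygies of modules in $\mathcal F$; so $\mathcal F^{\perp_\infty}$ is always the right-hand class of the cotorsion pair generated by $\mathcal S$ (your sketched argument, that syzygies of modules in $\mathcal F$ lie in the resolving class $\mathcal A$, also works), and your fallback paragraph is unnecessary.
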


\begin{example} Let $R$ be a right coherent ring and $n \geq 0$. Consider the balanced pair $\mathcal{PP}\times \mathcal{PI}$ in $\rmod R$. We have $\mathcal{PP}^{\perp_{\infty}}=\mathcal{FPI}$ because $R$ is right coherent, so Corollary \ref{tilt} yields that the cotorsion pair $\mathfrak C = (^{\perp_1}\mathcal{FPI}, \mathcal{FPI})$ is $n$-tilting, if and only (a) the class $\mathcal{FPI}$ contains all direct sums of injective modules, and (b) $\mathcal{PP} \subseteq \mathcal P _n$. However, condition (a) is satisfied (even for any ring $R$), since the class $\mathcal{FPI}$ is closed under direct sums. Condition (b) is equivalent to all finitely presented modules being of weak dimension $\leq n$ by \cite[Lemma 2.16]{GT}, and hence to $R$ being of weak global dimension $\leq n$.
\end{example}

\begin{corollary}\label{cotilt} Let $R$ be a ring, $n \geq 0$, and $\mathcal F \times \mathcal G$ be a balanced pair in $\rmod R$.

Then $(^{\perp_{\infty}}\mathcal G, (^{\perp_{\infty}}\mathcal G)^{\perp_1})$ is an $n$-cotilting cotorsion pair, if and only if $\mathcal{G} \subseteq \mathcal I _n$  and $^{\perp_{\infty}}\mathcal G$ contains all direct products of projective modules.
\end{corollary}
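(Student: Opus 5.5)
This is the dual of Corollary~\ref{tilt}, and the plan is to combine Lemma~\ref{1cotilt} with Lemma~\ref{prods}(2). Put $\mathcal A = {}^{\perp_\infty}\mathcal G$. First I check that $\mathfrak C = (\mathcal A, \mathcal A^{\perp_1})$ is a hereditary cotorsion pair. Consider $\mathcal G' = \bigcup_{i\ge 0}\Omega^{-i}\mathcal G$, where $\Omega^{-i}\mathcal G$ denotes the $i$-th cosyzygies computed with injective coresolutions. Dimension shifting gives $\mathcal A = {}^{\perp_1}\mathcal G'$, so $\mathfrak C$ is a cotorsion pair. The class $\mathcal A$ contains $\mathcal P_0$, is closed under extensions, and is closed under kernels of epimorphisms (long exact sequence of $\operatorname{Ext}$). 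Hence $\mathcal A$ is resolving, and $\mathfrak C$ is hereditary by \cite[Lemma 5.24]{GT}.

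For the ``only if'' part, assume $\mathfrak C$ is $n$-cotilting. By Lemma~\ref{1cotilt}, $\mathcal A$ is closed under direct products. Since $\mathcal P_0 \subseteq \mathcal A$, all products of projectives lie in $\mathcal A$. Also $\mathcal A^{\perp_1} \subseteq \mathcal I_n$. Every $G \in \mathcal G$ lies in $\mathcal A^{\perp_1}$ by the definition of $\mathcal A$, so $\mathcal G \subseteq \mathcal I_n$.

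For the ``if'' part, assume $\mathcal G \subseteq \mathcal I_n$ and that $\mathcal A$ contains all products of projectives. Lemma~\ref{prods}(2) gives that $\mathcal A$ is closed under direct products. By Lemma~\ref{1cotilt} it then remains to show that $\mathcal A^{\perp_1}$ is coresolving and contained in $\mathcal I_n$. Coresolving follows from heredity. The inclusion in $\mathcal I_n$ is the main point, and I would argue it as follows.

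Let $M$ be arbitrary and take a projective resolution of $M$ with $n$-th syzygy $K=\Omega^n M$. Since $\operatorname{id}(G)\le n$, we have $\operatorname{Ext}^i_R(K,G)\cong\operatorname{Ext}^{n+i}_R(M,G)=0$ for all $i\ge1$ and $G\in\mathcal G$. So $\Omega^n M\in\mathcal A$ for every module $M$. Now let $B\in\mathcal A^{\perp_1}$. Because $\mathfrak C$ is hereditary, $\mathcal A^{\perp_1} = \mathcal A^{\perp_\infty}$. Hence $\operatorname{Ext}^{n+1}_R(M,B)\cong\operatorname{Ext}^1_R(\Omega^nM,B)=0$ for every $M$, that is, $\operatorname{id}(B)\le n$. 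This gives $\mathcal A^{\perp_1}\subseteq\mathcal I_n$, and Lemma~\ref{1cotilt} shows that $\mathfrak C$ is $n$-cotilting.

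The only step that uses real machinery is the closure of $\mathcal A$ under products. That is exactly Lemma~\ref{prods}(2), whose proof dualizes Lemma~\ref{sums}(2): one uses products of projective presentations and the balance via Lemma~\ref{enochs}. I expect no further obstacle.
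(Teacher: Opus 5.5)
Your proof is correct and follows the paper's route: feed Lemma~\ref{prods}(2) into the characterization in Lemma~\ref{1cotilt}, and your syzygy argument for $({}^{\perp_\infty}\mathcal G)^{\perp_1}\subseteq\mathcal I_n$ is an explicit version of the paper's appeal to the hereditary cotorsion pair $({}^{\perp}\mathcal I_n,\mathcal I_n)$. One small remark: $\operatorname{Ext}^{n+1}_R(M,B)\cong\operatorname{Ext}^1_R(\Omega^n M,B)$ holds for every module $B$, so you do not need heredity of $\mathfrak C$ at that step.
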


\begin{example} Let $R$ be any ring and $n \geq 0$. Consider the balanced pair $\mathcal{PP}\times \mathcal{PI}$ in $\rmod R$. Since $^{\perp_{\infty}}\mathcal{PI}=\mathcal{F}_0$, Corollary \ref{cotilt} yields that the cotorsion pair $\mathfrak C = (\mathcal{F}_0, \mathcal{F}_0^{\perp_1})$ is $n$-cotilting, if and only (a) the class $\mathcal F_0$ contains all direct products of projective modules, and (b) $\mathcal{PI} \subseteq \mathcal I _n$. By a classic theorem of Chase, (a) is equivalent to $R$ being left coherent, while (b) is equivalent to $R$ having weak global dimension $\leq n$. Indeed, since $\mathfrak C$ is cogenerated by the class of all dual modules, (b) is equivalent to all dual modules having injective dimension $\leq n$, and hence to all (left $R$-) modules having weak dimension $\leq n$.
\end{example}

From Lemma \ref{sums}, we also get

\begin{corollary}\label{tilt'''} Let $R$ be a right noetherian ring with finite global dimension and $\mathcal F \times \mathcal G$ be a balanced pair in $\rmod R$. Then $(^{\perp_1}(\mathcal{F}^{\perp_{\infty}}), \mathcal{F}^{\perp_{\infty}})$ is a tilting cotorsion pair.
\end{corollary}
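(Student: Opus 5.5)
Since $R$ has finite global dimension, say $d$, every module has projective dimension $\leq d$, so $\mathcal F \subseteq \mathcal P_d = \rmod R$. Hence the condition $\mathcal F \subseteq \mathcal P_n$ in Corollary \ref{tilt} holds automatically with $n = d$. It therefore suffices to check the second hypothesis of Corollary \ref{tilt}, namely that $\mathcal F^{\perp_\infty}$ contains all direct sums of injective modules. Corollary \ref{tilt} then yields that $(^{\perp_1}(\mathcal{F}^{\perp_{\infty}}), \mathcal{F}^{\perp_{\infty}})$ is a $d$-tilting, hence tilting, cotorsion pair.

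For the second hypothesis I use right noetherianity. By the Bass--Papp theorem, over a right noetherian ring every direct sum of injective modules is injective. Injective modules clearly lie in $\mathcal F^{\perp_\infty}$, so $\mathcal F^{\perp_\infty}$ contains all direct sums of injectives. This is the noetherian case mentioned parenthetically in Lemma \ref{sums}(2), which shows in addition that $\mathcal F^{\perp_\infty}$ is closed under arbitrary direct sums.

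There is no real obstacle; the only care needed concerns ``global dimension''. If the hypothesis means the right global dimension, the argument above applies verbatim. If it means the left global dimension, note that a noetherian ring with finite global dimension on one side has equal left and right global dimensions by Auslander's theorem, and the right one is the relevant quantity here. Either way $\mathcal F \subseteq \mathcal P_d$ for a finite $d$, and the corollary follows.
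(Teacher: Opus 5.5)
Your proof is correct and is essentially the paper's argument. Finite right global dimension $d$ gives $\mathcal F\subseteq\mathcal P_d$, and right noetherianity (Bass--Papp) puts all direct sums of injectives into $\mathcal F^{\perp_\infty}$, so Lemma \ref{sums}(2) and Corollary \ref{tilt} (equivalently Lemma \ref{1tilt}) yield a $d$-tilting cotorsion pair.

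One small quibble concerns your aside on left global dimension. $R$ is only assumed right noetherian, so Auslander's two-sided equality is not available. What does hold is that for right noetherian $R$ the right global dimension equals the weak global dimension, which is at most the left one. That still gives the finiteness you need.
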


\begin{remark} We can construct tilting cotorsion pairs from balanced pairs using Corollaries \ref{tilt} and \ref{tilt'''}. But it is difficult to find a balanced pair $\mathcal F \times \mathcal G$ such that $(\mathcal{F}, \mathcal{F}^{\perp_1})$ is a cotorsion pair. For instance, if we assume that $\mathcal F$ is the class of pure-projective $R$-modules and $\mathcal G$ is the class of
pure-injective $R$-modules, then $(\mathcal{F}, \mathcal{F}^{\perp_1})$ is not a cotorsion pair, but $(^{\perp_1}(\mathcal{F}^{\perp_1}), \mathcal{F}^{\perp_1})= (^{\perp_1}\mathcal{FPI}, \mathcal{FPI})$ is a cotorsion pair.
\end{remark}

Next, we turn again to cotorsion triples. First, we observe that a cotorsion triple $(\mathcal A,\mathcal B,\mathcal C)$ whose first component $(\mathcal A,\mathcal B)$ is tilting must be trivial:

\begin{proposition}\label{trivial} Let $R$ be a ring and $t = (\mathcal A,\mathcal B,\mathcal C)$ be a cotorsion triple such that the cotorsion pair $\mathfrak C = (\mathcal A,\mathcal B)$ is tilting. Then $t$ is trivial (i.e., $\mathcal A = \mathcal P_0$).
\end{proposition}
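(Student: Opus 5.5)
Since $\mathfrak C = (\mathcal A,\mathcal B)$ is tilting, it is induced by some tilting module $T$, so $\mathcal B = \mathcal T_T = T^{\perp_\infty}$ is closed under direct sums. My plan is to show that $\mathcal B$ contains every module, using the second cotorsion pair $(\mathcal B,\mathcal C)$. Once $\mathcal B = \rmod R$, we get $\mathcal A = {}^{\perp_1}\mathcal B = \mathcal P_0$ (and $\mathcal C = \mathcal I_0$), so $t = t_0$ is trivial.

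First I will use that $\mathcal B$ is also a left-hand class of a cotorsion pair, namely $\mathcal B = {}^{\perp_1}\mathcal C$. Any class of the form ${}^{\perp_1}\mathcal C$ contains $\mathcal P_0$ and is closed under direct sums and extensions (Eklof Lemma). Since $\mathcal B = \mathcal A^{\perp_1}$, it is closed under direct products. It also contains all injective modules, because it is a right-hand class.

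The key step is to show that $\mathcal B$ is closed under submodules (or quotients) enough to contain everything. Since $\mathfrak C$ is tilting, it is hereditary, so $\mathcal B$ is coresolving: it is closed under cokernels of monomorphisms. Now take any module $M$ and a free presentation $0 \to K \to F \to M \to 0$. Here $F \in \mathcal P_0 \subseteq \mathcal B$, but to conclude $M \in \mathcal B$ I would need $K \in \mathcal B$, which is circular.

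So I will instead use the tilting structure directly. By (T3) there is an exact sequence $0 \to R \to T_0 \to \cdots \to T_n \to 0$ with $T_i \in \Add T$. Moreover $T \in \mathcal A \cap \mathcal B$, since $T \in {}^{\perp_1}\mathcal T_T$ and $T \in T^{\perp_\infty}$ by (T2).

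Because $T \in \mathcal B = {}^{\perp_1}\mathcal C$ and $T \in \mathcal A$, I will aim to show that $T$ is projective. The cotorsion pair $(\mathcal A,\mathcal B)$ is complete, so $T \in \mathcal A$ is a direct summand of an $\mathcal A$-module built from projectives. Concretely, take an epimorphism $P \to T$ with $P$ projective and kernel $K$. Since $P, T \in \mathcal B$ and $\mathcal B = {}^{\perp_1}\mathcal C$ is resolving, hence closed under kernels of epimorphisms, we get $K \in \mathcal B$. As $T \in \mathcal A = {}^{\perp_1}\mathcal B$, we have $\Ext{1}{R}{T}{K}=0$, so the sequence splits and $T \in \mathcal P_0$.

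There is a subtlety here. Resolvingness of ${}^{\perp_1}\mathcal C$ needs $(\mathcal B,\mathcal C)$ to be hereditary, which is not assumed. I can bypass this because $\mathcal B$ is itself closed under kernels of epimorphisms between its members. Indeed $\mathcal B = \mathcal A^{\perp_1}$, and since $\mathcal A \subseteq \mathcal P_n$ and the pair is hereditary, $\mathcal B = \mathcal A^{\perp_\infty}$. Then for $0 \to K \to P \to T \to 0$ with $P,T \in \mathcal B$, the long exact sequence gives $\Ext{i}{R}{A}{K}$ sandwiched between $\Ext{i-1}{R}{A}{T}$ and $\Ext{i}{R}{A}{P}$. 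This only yields $\Ext{i}{R}{A}{K}=0$ for $i\ge 2$, while for $i=1$ the surjectivity of $\Hom RAP \to \Hom RAT$ is not automatic. This is the main obstacle. To handle it, I will choose $P \to T$ to be a special $\mathcal A$-precover-type map. More simply, I will use $\mathcal B = {}^{\perp_1}\mathcal C$ with $P \in \mathcal P_0$: for $C \in \mathcal C$, $\Ext{1}{R}{K}{C} \cong \Ext{2}{R}{T}{C}$. So it suffices that $\Ext{2}{R}{T}{C}=0$, which I would get by showing $\mathcal C \subseteq \mathcal B$ is coresolving-friendly via injective coresolutions of $C$ lying in $\mathcal B$.

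Once $T$ is projective, $\mathcal B = T^{\perp_\infty} = \rmod R$, and hence $\mathcal A = \mathcal P_0$.
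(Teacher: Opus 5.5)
Your proof never establishes its crucial step: that the kernel $K$ of $P\to T$ lies in $\mathcal B$, or equivalently that $T$ is projective.

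\begin{itemize}
\item Your first justification is that $\mathcal B={}^{\perp_1}\mathcal C$ is resolving. That requires $(\mathcal B,\mathcal C)$ to be hereditary, which is not given.
\item Your second, via $\mathcal B=\mathcal A^{\perp_\infty}$, breaks down at $i=1$, as you yourself observe.
\item Your final reduction to $\Ext{2}{R}{T}{C}=0$ for all $C\in\mathcal C$ is, by your own dimension shift, just a restatement of $K\in\mathcal B$. The way you suggest proving it (that $\mathcal C\subseteq\mathcal B$, so cosyzygies of $C$ stay in $\mathcal B$) is only asserted.
\end{itemize}

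Worse, the suggested route is circular. Since $\mathcal B=\mathcal A^{\perp_1}$ and $\mathcal B={}^{\perp_1}\mathcal C$, the inclusion $\mathcal C\subseteq\mathcal B$ is equivalent to $\Ext{1}{R}{A}{C}=0$ for all $A\in\mathcal A$ and $C\in\mathcal C$. That is exactly $\mathcal A\subseteq\mathcal B$, the key fact the whole proof hinges on.

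The missing idea uses two facts you already wrote down in your third paragraph: $\mathcal P_0\subseteq\mathcal B$, and $\mathcal B$ is coresolving. You applied closure under cokernels of monomorphisms to a free presentation of an arbitrary module, which is indeed circular. Apply it instead along a finite projective resolution $0\to P_m\to\cdots\to P_0\to X\to 0$, working up through the syzygies starting from $P_m$. This gives $\mathcal P\subseteq\mathcal B$. Since $\mathfrak C$ is tilting, $\mathcal A\subseteq\mathcal P$, so $\mathcal A\subseteq\mathcal B$.

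Now your splitting argument goes through:
\begin{itemize}
\item $K$ has finite projective dimension (alternatively, $K\in\mathcal A$ because $\mathcal A$ is resolving), so $K\in\mathcal B$.
\item Hence $\Ext{1}{R}{T}{K}=0$, the sequence splits, and $T\in\mathcal P_0$.
\item Therefore $\mathcal B=T^{\perp_\infty}=\rmod R$ and $\mathcal A=\mathcal P_0$.
\end{itemize}

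The paper's proof rests on the same inclusion $\mathcal A\subseteq\mathcal P\subseteq\mathcal B$ but finishes differently. It notes that $\operatorname{Add}(T)=\mathcal A\cap\mathcal B=\mathcal A\supseteq\mathcal P_0$. Since $\mathcal B$ is the class of modules admitting an $\operatorname{Add}(T)$-resolution, every projective resolution shows that every module lies in $\mathcal B$.
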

\begin{proof} Assume $\mathfrak C$ is tilting. Then $\mathcal A \subseteq \mathcal P$. Since $\mathcal B \supseteq \mathcal P _0$ and $\mathcal B$ is coresolving, $\mathcal B \supseteq \mathcal P$. Thus $\mathcal A \subseteq \mathcal B$.

Let $T$ be a tilting module such that $\mathfrak C$ is induced by $T$. By \cite[Lemma 13.10(c)]{GT}, Add$(T) = \mathcal A \cap \mathcal B = \mathcal A$. Moreover, by \cite[Proposition 13.13(b)]{GT}, the class $\mathcal B$ coincides with the class of all modules possessing a (possibly infinite) Add$(T)$-resolution. Since $\mathcal P _0 \subseteq \mathcal A$, we infer that $\mathcal B = \rmod R$, and $\mathcal A = \mathcal P _0$.
\end{proof}

By Example \ref{vG}(1), for any Iwanaga-Gorenstein ring $R$, the complete hereditary cotorsion pair $(\mathcal P,\mathcal{GI})$ is tilting, and it forms the second component in the cotorsion triple $(\mathcal{GP},\mathcal P, \mathcal{GI})$. In fact, this is the only non-trivial possibility:

\begin{proposition}\label{almosttrivial} Let $R$ be a ring and $t = (\mathcal A,\mathcal B,\mathcal C)$ be a cotorsion triple such that the cotorsion pair $\mathfrak C = (\mathcal B,\mathcal C)$ is tilting. Then $\mathcal B \supseteq \mathcal I$ and $\mathcal C \supseteq \mathcal{GI}$. Moreover
\begin{enumerate}
\item[(1)] If $R$ has finite right global dimension, then $t = (\mathcal P _0,\rmod R,\mathcal I _0)$, and $R$ is right noetherian.
\item[(2)] If $R$ is Iwanaga-Gorenstein, then $t = (\mathcal{GP},\mathcal P,\mathcal{GI})$.
\item[(3)] Let $R$ be the virtually Gorenstein ring from Example \ref{vG}(2). Then there exists no cotorsion triple $t = (\mathcal A,\mathcal B,\mathcal C)$ such that the cotorsion pair $\mathfrak C = (\mathcal B,\mathcal C)$ is tilting.
\end{enumerate}
\end{proposition}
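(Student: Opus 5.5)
The plan is to extract two closure properties of $\mathcal B$ from the hypotheses and then specialise them to each case. Since $(\mathcal B,\mathcal C)$ is tilting, Lemma \ref{1tilt} gives that $\mathcal B$ is resolving and $\mathcal B\subseteq\mathcal P$, and that $\mathcal C$ is closed under direct sums. Since $(\mathcal A,\mathcal B)$ is a cotorsion pair, $\mathcal B=\mathcal A^{\perp_1}\supseteq\mathcal I_0$. I will not try to prove that $(\mathcal A,\mathcal B)$ is hereditary; closure of $\mathcal B$ under kernels of epimorphisms is enough.

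\textbf{Step 1: $\mathcal B\supseteq\mathcal I$.} Let $M$ have an injective coresolution
\[
0\to M\to E_0\to\cdots\to E_n\to 0 .
\]
Work from the right: $\operatorname{Ker}(E_{n-1}\to E_n)$ is the kernel of an epimorphism between modules of $\mathcal I_0\subseteq\mathcal B$, so it lies in $\mathcal B$. Each further cosyzygy is the kernel of an epimorphism from some $E_i\in\mathcal B$ onto the previous cosyzygy, which is already in $\mathcal B$. By induction $M\in\mathcal B$.

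\textbf{Step 2: $\mathcal C\supseteq\mathcal{GI}$.} Recall that ${}^{\perp_1}\mathcal{GI}\supseteq\mathcal I$ and, dually to $\mathcal{GP}^{\perp_1}\supseteq\mathcal P$, also ${}^{\perp_\infty}\mathcal{GI}\supseteq\mathcal P$. Given $G\in\mathcal{GI}$, we have $\operatorname{Ext}^1_R(B,G)=0$ for every $B\in\mathcal B\subseteq\mathcal P$. Hence $G\in\mathcal B^{\perp_1}=\mathcal C$.

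\textbf{Case (1).} If $R$ has finite right global dimension, then $\mathcal I=\rmod R$, so Step 1 gives $\mathcal B=\rmod R$. Consequently $\mathcal A=\mathcal P_0$ and $\mathcal C=\mathcal I_0$. Tiltingness of $(\rmod R,\mathcal I_0)$ forces $\mathcal I_0$ to be closed under direct sums, and by the Bass--Papp theorem $R$ is right noetherian.

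\textbf{Case (2).} If $R$ is Iwanaga-Gorenstein, then $\mathcal P=\mathcal I$. So $\mathcal P=\mathcal I\subseteq\mathcal B\subseteq\mathcal P$, i.e.\ $\mathcal B=\mathcal P$. Then $\mathcal A={}^{\perp_1}\mathcal P=\mathcal{GP}$ and $\mathcal C=\mathcal P^{\perp_1}=\mathcal{GI}$, using the cotorsion pairs $(\mathcal{GP},\mathcal P)$ and $(\mathcal P,\mathcal{GI})$ of Example \ref{vG}(1).

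\textbf{Case (3).} I take $R=R_0$, a commutative local artinian ring with $\operatorname{id}(R_0)=\infty$. Suppose such a triple exists. By the first part of the argument, $\mathcal I_0\subseteq\mathcal B\subseteq\mathcal P$, so every injective module has finite projective dimension. Over a commutative local artinian ring the depth is $0$, so by the Auslander--Buchsbaum formula every module of finite projective dimension is projective; this extends to infinitely generated modules, e.g.\ via Bass's theorem that flat modules over a perfect ring are projective together with the vanishing of the finitistic dimension. In particular the injective hull $E(k)$ of the residue field is projective, hence free as $R$ is local, so $R$ is a summand of a sum of copies of $E(k)$ and is self-injective, i.e.\ quasi-Frobenius, contradicting $\operatorname{id}(R_0)=\infty$.

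For the products $R_0\times R_n$ the same reasoning applies after restricting to the $R_0$-component, since module categories, $\mathcal I_0$, $\mathcal P$ and Ext all decompose along the product. The main obstacle I expect is this step: justifying ``finite projective dimension implies projective'' for arbitrary, not necessarily finitely generated, modules over $R_0$. That is the one place where a result from outside the paper has to be cited carefully.
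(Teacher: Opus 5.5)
Your proof is correct. Step 1 and case (1) follow the paper's argument, but you reach $\mathcal C\supseteq\mathcal{GI}$ and case (3) by a different route.

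For $\mathcal C\supseteq\mathcal{GI}$, the paper fixes a tilting module $T$ inducing $(\mathcal B,\mathcal C)$ and observes $\operatorname{Add}(T)=\mathcal B\cap\mathcal C\supseteq\mathcal I_0$. It then uses the description of a tilting class via $\operatorname{Add}(T)$-resolutions to conclude that $\mathcal C$ contains every module with a complete injective resolution. You need only $\mathcal B\subseteq\mathcal P$ and Ext-orthogonality, which is shorter and avoids the tilting module altogether.

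In (3), the paper invokes the classification of tilting classes over commutative noetherian rings to see that every tilting module over $R_0$ is projective. Then $\mathcal P_0$ would be a right-hand class of a cotorsion pair and would contain $\mathcal I_0$. You reach the same contradiction from $\mathcal I_0\subseteq\mathcal B\subseteq\mathcal P=\mathcal P_0$ without any classification, and your argument also covers the products $R_0\times R_n$. In (2), getting $\mathcal B=\mathcal P$ from $\mathcal I\subseteq\mathcal B\subseteq\mathcal P=\mathcal I$ rather than from the cotorsion pair $(\mathcal I,\mathcal{GI})$ is only a cosmetic difference.

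Two justifications need repair.

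\emph{First}, the inclusion ${}^{\perp_1}\mathcal{GI}\supseteq\mathcal P$ is not the dual of $\mathcal{GP}^{\perp_1}\supseteq\mathcal P$; that dual is ${}^{\perp_1}\mathcal{GI}\supseteq\mathcal I$. Your inclusion is nevertheless true, for either of two reasons:
\begin{enumerate}
\item ${}^{\perp_1}\mathcal{GI}$ contains $\mathcal P_0$ and is coresolving by Lemma \ref{cort-sar-stov}(2).
\item Every $G\in\mathcal{GI}$ admits an exact sequence $\cdots\to E_1\to E_0\to G\to 0$ of injectives. Dimension shifting along it gives $\Ext{1}{R}{M}{G}\cong\Ext{n+1}{R}{M}{G_n}=0$ when $\operatorname{pd}(M)\le n$, where $G_n$ is the $n$-th kernel. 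This version even recovers the paper's statement about modules with a complete injective resolution.
\end{enumerate}

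\emph{Second}, the equality $\mathcal P=\mathcal P_0$ over $R_0$ for arbitrary modules has a clean direct proof. Since $R_0$ is perfect, minimal projective resolutions exist. If $\operatorname{pd}(M)=n\ge 1$, then $\Omega^nM$ is nonzero, free, and contained in $\mathfrak m P_{n-1}$. A nonzero element of the socle of $R_0$ annihilates $\mathfrak m P_{n-1}$ but not the nonzero free module $\Omega^nM$, a contradiction. Alternatively, cite the theorem of Bass and Gruson--Raynaud that the big finitistic projective dimension of a commutative noetherian ring equals its Krull dimension.
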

\begin{proof} Assume $\mathfrak C$ is tilting. Then $\mathcal B \subseteq \mathcal P$. Since $\mathcal B \supseteq \mathcal I _0$ and $\mathcal B$ is resolving, $\mathcal B \supseteq \mathcal I$.

Let $T$ be a tilting module such that $\mathfrak C$ is induced by $T$. By \cite[Lemma 13.10(c)]{GT}, Add$(T) = \mathcal B \cap \mathcal C \supseteq \mathcal I _0$. So by \cite[Proposition 13.13(a)]{GT}, the class $\mathcal C$ contains the class of all modules possessing a complete injective resolution. In particular, $\mathcal C \supseteq \mathcal{GI}$.

(1) If $R$ has finite right global dimension, then $\mathcal B = \mathcal I = \rmod R$ and $\mathcal C = \mathcal I _0$. By Lemma
\ref{1tilt}, the class $\mathcal C$ is closed under direct sums, that is, $R$ is right noetherian.

(2) If $R$ is Iwanaga-Gorenstein, then $(\mathcal I,\mathcal{GI})$ is a cotorsion pair, so the two inclusions above are actually equalities: $\mathcal B = \mathcal I (= \mathcal P)$, and $\mathcal C = \mathcal{GI}$.

(3) By \cite[Example 2.15]{DLW}, $R$ is a commutative local artinian ring with the maximal ideal $m$ such that $m^2 = 0$. In particular, $\Spec R = \Ass {R}{R} = \{ m \}$, so \cite[Theorem 16.4]{GT} yields that all tilting modules are projective. However, $(^\perp \mathcal P_0,\mathcal P_0)$ is not a cotorsion pair in $\rmod R$ (otherwise $\mathcal I_0 \subseteq \mathcal P _0$, so $R$ is quasi-Frobenius, whence $R \in \mathcal I _0$, a contradiction).
\end{proof}

\section{Cotorsion torsion triples and their duals}\label{Section4}

The Hom- and Ext- orthogonal pairs, i.e., the torsion and the cotorsion pairs, in an abelian category can be viewed as sort of orthogonal decompositions of the category. In \cite{BBOS}, this fact was employed in developing the representation theory of certain rectangular grids occurring in persistent homology theory. The central notion in \cite{BBOS} combined the two decompositions into one, namely the notion of a \emph{cotorsion torsion triple} in an abelian category $\mathcal A$ with enough projectives.

One of the main results of \cite{BBOS} (see also \cite{B}) was the classification of cotorsion torsion triples in $\mathcal A$ in terms of tilting subcategories of $\mathcal A$. Our goal here is to generalize and extend, in the particular setting of module categories, the results from \cite{BBOS} and \cite{B} and reveal thus possible new approaches to cotorsion torsion triples in categories of modules. Our main tool is infinite dimensional tilting theory of modules, cf.\ \cite[Part III]{GT}.

In Theorem \ref{ctt}, we show that $1$-resolving subcategories $\mathcal S$ of the category $\rfmod R$ of all finitely presented modules classify cotorsion torsion triples $c = (\mathcal C,\mathcal T,\mathcal F)$ in $\rmod R$ via the assignment $\mathcal S \mapsto c_{\mathcal S}$ where $c_{\mathcal S}$ is the triple with the middle term $\mathcal T = \mathcal S ^{\perp_1}$. The classic Ext-Tor duality then yields a 1-1 map from the cotorsion torsion triples in $\rmod R$ to the dual setting of torsion cotorsion triples in $\lmod R$ as follows: the cotorsion torsion triple $c_{\mathcal S}$ in $\rmod R$ is mapped to the torsion cotorsion triple $d_S = (\mathcal T,\mathcal F,\mathcal D)$ in $\lmod R$ with the middle term $\mathcal F = \mathcal S ^{\intercal}$. This map is bijective when $R$ is left noetherian, providing an explicit expression for the formal duality between the two settings (Corollary~\ref{explicit}).

\medskip
We start by recalling the relevant definitions.

\begin{definition}\label{general} Let $R$ be a ring.
A triple of classes of modules $(\mathcal C, \mathcal T, \mathcal F)$ is a \emph{cotorsion torsion triple} in case $(\mathcal C, \mathcal T)$ is a cotorsion pair, and $(\mathcal T, \mathcal F)$ a torsion pair in $\rmod R$.

Dually, a triple of classes of left $R$-modules $(\mathcal T, \mathcal F, \mathcal D)$ is a \emph{torsion cotorsion triple} in case $(\mathcal T, \mathcal F)$ is a torsion pair, and $(\mathcal F, \mathcal D)$ a cotorsion pair in $\lmod R$.
\end{definition}

Definition \ref{general} is more general than \cite[Definitions 2.6 and 2.9]{BBOS} (or the corresponding definitions in \cite{B}), where all the cotorsion pairs involved are assumed to be complete, cf.\ \cite[Remark 2.7]{BBOS}. We will see that in the setting of module categories, the structure results for cotorsion torsion triples and torsion cotorsion triples obtained in \cite[\S 2]{BBOS} do not require the a priori assumption of completeness: completeness follows in this case from the other properties assumed in our Definition \ref{general}.

In contrast, our next definition is exactly the same as \cite[Definition 2.16 and \S2.3]{BBOS}:

\begin{definition}\label{wtilt} Let $R$ be a ring.

\begin{enumerate}
\item[(1)] A full subcategory $\mathbb T$ of $\rmod R$ which is closed under finite direct sums and direct summands is a \emph{weak tilting} subcategory in case $\mathbb T$ satisfies the following three conditions:

(i) $\Ext {1}{R}{T_1}{T_2} = 0$ for all $T_1, T_2 \in \mathbb T$,

(ii) $\mathbb T$ consists of modules of projective dimension $\leq 1$, and

(iii) for each projective module $P$ there exists a short exact sequence $0 \to P \to T_0 \to T_1 \to 0$ where $T_0, T_1 \in \mathbb T$.

A weak tilting subcategory $\mathbb T$ is \emph{tilting} in case $\mathbb T$ is a precovering class in $\rmod R$.

\item[(2)] A full subcategory $\mathbb C$ of $\lmod R$ closed under finite direct products and direct summands is a \emph{weak cotilting} subcategory in case it satisfies the following three conditions

(i) $\Ext {1}{R}{C_1}{C_2} = 0$ for all $C_1, C_2 \in \mathbb T$,

(ii) $\mathbb C$ consists of modules of injective dimension $\leq 1$, and

(iii) for each injective module $I$ there exists a short exact sequence $0 \to C_0 \to C_1 \to I \to 0$ where $C_0, C_1 \in \mathbb C$.

A weak cotilting subcategory $\mathbb C$ is \emph{cotilting} in case $\mathbb C$ is a preenveloping class in $\lmod R$.
\end{enumerate}
\end{definition}

Notice that since all precovering/preenveloping classes are closed under (arbitrary) direct sums/products by Lemma \ref{easy}, this in particular holds for the tilting/cotilting subcategories from Definition \ref{wtilt}. In Corollary \ref{weakclosed}, we will see that for module categories, the converse holds in the weak setting, i.e., closure under (arbitrary) direct sums/products already implies the precovering/preenveloping property.

\cite[Example 2.21]{BBOS} provides an example of a weak tilting subcategory which is not tilting in the abelian category of all finitely presented representations of the totally ordered set of all positive real numbers. Here, we provide an example of this phenomenon in $\rmod R$:

\begin{example}\label{exreg} (1) Let $R$ be any right hereditary ring which is not right noetherian (e.g., let $K$ be a field and $R$ be the ring of all eventually constant sequences of elements of $K$).

Let $\mathbb T$ denote the category of all injective modules. Since $R$ is right hereditary, $\mathbb T$ is a weak tilting subcategory of $\rmod R$. However, since $R$ is not right noetherian, $\mathbb T$ is not closed under direct sums, so $\mathbb T$ is not a tilting subcategory of $\rmod R$.

(2) There is a particular instance of (1) where the class $\mathbb T$ of all injective modules even does not contain \emph{any} infinite direct sums of non-zero modules:

Let $R$ be a simple countable von Neumann regular ring such that $R$ is not completely reducible. Such rings $R$ can be constructed as subrings of the simple rings of the form $E/M$ where $E$ is the endomorphism ring of an infinite dimensional linear space $L$, and $M$ is the maximal two-sided ideal of $R$ (consisting of the endomorphisms of rank less than the dimension of $L$).

Since $R$ is countable, $R$ is hereditary. Since $R$ is simple, $\Ext {1}{R}{R/I}{N} \neq 0$ whenever $N$ is an infinite direct sum of non-zero modules, and $I$ is an infinitely (countably) generated right ideal in $R$. In particular, no infinite direct sum of non-zero modules is injective.
\end{example}

\medskip

As we do not assume completeness of cotorsion pairs a priori, our proofs are quite different from those given in \cite{BBOS} and \cite{B}. However, by employing tools of infinite dimensional tilting theory of modules \cite[Part III]{GT}, we can proceed directly to the main characterizations:

\begin{theorem}\label{ctt} Let $R$ be a ring. Then there is a bijective correspondence between
\begin{enumerate}
\item[(1)] cotorsion torsion triples $(\mathcal C, \mathcal T, \mathcal F)$ in $\rmod R$,

\item[(2)] $1$-tilting torsion classes $\mathcal B$ in $\rmod R$,

\item[(3)] tilting subcategories $\mathbb T$ of $\rmod R$,

\item[(4)] weak tilting subcategories $\mathbb T$ of $\rmod R$ closed under direct sums, and

\item[(5)] $1$-resolving subcategories $\mathcal S$ of $\rfmod R$.
\end{enumerate}
\end{theorem}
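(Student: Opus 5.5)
The plan is to pass through the middle item (2): I will show that each of (1), (3), (4), (5) is in bijection with the $1$-tilting torsion classes $\mathcal B$. The maps are as follows. For (1)$\to$(2), send $(\mathcal C,\mathcal T,\mathcal F)$ to $\mathcal T$. For (2)$\to$(1), send $\mathcal B$ to $({}^{\perp_1}\mathcal B,\mathcal B,\mathcal B^{\perp_0})$. For (2)$\to$(3) and (2)$\to$(4), send $\mathcal B$ to $\mathbb T={}^{\perp_1}\mathcal B\cap\mathcal B=\Add T$, where $T$ is a $1$-tilting module inducing $\mathcal B$. For (3),(4)$\to$(2), send $\mathbb T$ to $\mathbb T^{\perp_1}$. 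Between (2) and (5), use $\mathcal S\mapsto\mathcal S^{\perp_1}$ and $\mathcal B\mapsto{}^{\perp_1}\mathcal B\cap\rfmod R$.

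\textbf{(1)$\leftrightarrow$(2).} Take a cotorsion torsion triple $(\mathcal C,\mathcal T,\mathcal F)$. Then $\mathcal T$ is a torsion class, so it is closed under quotients and direct sums. Since $\mathcal T$ contains the injective modules and is closed under quotients, every $C\in\mathcal C$ satisfies $\Ext{2}{R}{C}{M}\cong\Ext{1}{R}{C}{E/M}=0$, where $E$ is an injective envelope of $M$. So $\mathcal C\subseteq\mathcal P_1$. Lemma \ref{1tilt} now shows that $(\mathcal C,\mathcal T)$ is $1$-tilting, hence $\mathcal T$ is a $1$-tilting torsion class. Conversely, if $\mathcal B$ is $1$-tilting, then it is a torsion class by Example \ref{tiltcotilt-tpairs}, so $({}^{\perp_1}\mathcal B,\mathcal B,\mathcal B^{\perp_0})$ is a cotorsion torsion triple. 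Both maps are mutually inverse, because each component of a cotorsion pair or of a torsion pair determines the other.

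\textbf{(2)$\leftrightarrow$(3),(4).} For $\mathcal B=T^{\perp_1}$ with $T$ $1$-tilting, put $\mathbb T=\Add T$. This class is closed under direct sums and summands, satisfies $\operatorname{Ext}^1$-vanishing, lies in $\mathcal P_1$, and condition (iii) of Definition \ref{wtilt} follows from (T3) applied to free modules, taking direct summands for projectives. It is precovering, since $\Add T$ is precovering for any module $T$. So $\mathbb T$ lies in (3), and hence in (4) by Lemma \ref{easy}. Conversely, let $\mathbb T$ be weak tilting and closed under direct sums. I will pick a set of representatives and form $T=\bigoplus$ of enough copies; more simply, I will argue that $\mathcal B:=\mathbb T^{\perp_1}$ satisfies Lemma \ref{1tilt}, applied to the cotorsion pair $({}^{\perp_1}\mathcal B,\mathcal B)$. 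Two points need checking. First, $\mathcal B$ must be closed under direct sums. Since $\mathbb T\subseteq\mathcal P_1$, the class $\mathcal B$ is closed under quotients, and it is the Ext-orthogonal of a class closed under sums; I will handle this by showing $\mathbb T=\Add T_0$ for a single module $T_0$ that satisfies (T1)--(T3), namely a sum of the $T_0$'s from (iii) for $P=R$ enlarged by $T_1$. Second, ${}^{\perp_1}\mathcal B\subseteq\mathcal P_1$. I will get this from the filtration description: ${}^{\perp_1}\mathcal B$ consists of summands of $\mathbb T\cup\{R\}$-filtered modules, and these lie in $\mathcal P_1$. The key point is the recovery $\mathbb T={}^{\perp_1}(\mathbb T^{\perp_1})\cap\mathbb T^{\perp_1}$. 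For $X$ in the right-hand side, take a $\mathbb T$-precover, which can be arranged to be special using (iii) and closure under sums, and show that it splits.

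\textbf{(2)$\leftrightarrow$(5).} This is the finite type theorem for tilting classes \cite[Theorem 13.46]{GT}. Every $1$-tilting class is of the form $\mathcal S^{\perp_1}$ for $\mathcal S={}^{\perp_1}\mathcal B\cap\rfmod R$, and this $\mathcal S$ is $1$-resolving. Conversely, for a $1$-resolving $\mathcal S$, the class $\mathcal S^{\perp_1}$ is $1$-tilting, with ${}^{\perp_1}(\mathcal S^{\perp_1})$ equal to the summands of $\mathcal S$-filtered modules \cite[Theorem 13.49 and Lemma 13.48]{GT}. Injectivity of this correspondence uses ${}^{\perp_1}(\mathcal S^{\perp_1})\cap\rfmod R=\mathcal S$, which follows from the Hill lemma together with closure of $\mathcal S$ under extensions and summands.

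The main obstacle is the step (4)$\Rightarrow$(2): producing a genuine $1$-tilting module from a weak tilting subcategory that is only assumed closed under arbitrary direct sums, and showing that the subcategory coincides with $\Add T$. My first attempt will be to let $T$ be the direct sum of the modules $T_0\oplus T_1$ obtained from (iii) applied to $P=R$, verify (T1)--(T3) directly, and then show $\mathbb T=\Add T$ using the equality $\Add T=T^{\perp_1}\cap{}^{\perp_1}(T^{\perp_1})$.
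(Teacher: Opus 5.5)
Your overall architecture matches the paper's. You get (1)$\leftrightarrow$(2) by the cosyzygy argument and Lemma~\ref{1tilt}. You get (2)$\to$(3) via $\Add T=\mathcal B\cap{}^{\perp_1}\mathcal B$. The inverse $\mathbb T\mapsto\mathbb T^{\perp_1}$ uses the $1$-tilting module $T'=T_0\oplus T_1$ obtained from (iii) for $P=R$. Finally, (2)$\leftrightarrow$(5) comes from the finite type results of \cite{GT}. Your remark that $\Add T$ is precovering for any $T$ is correct: the canonical map $T^{(\Hom{R}{T}{M})}\to M$ is an $\Add T$-precover. It is a simpler replacement for the paper's precover built through the torsion part $t(M)$.

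\textbf{The gap is exactly at the step you flag.} Once $T'$ is known to be $1$-tilting, the inclusion $\Add T'\subseteq\mathbb T$ is immediate. The reverse inclusion $\mathbb T\subseteq\Add T'=(T')^{\perp_1}\cap{}^{\perp_1}((T')^{\perp_1})$ is not. It requires $\Ext{1}{R}{X}{Y}=0$ for all $X\in\mathbb T$ and all $Y\in(T')^{\perp_1}$, but condition (i) only gives this for $Y\in\mathbb T$. Quoting the identity for $\Add T'$ does not supply it.

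The missing idea is a pushout. Given $Y\in(T')^{\perp_1}$, push out an epimorphism $R^{(J)}\to Y$ along $R^{(J)}\to T_0^{(J)}$. This gives an exact sequence $0\to Y\to Z\to T_1^{(J)}\to 0$, which splits because $\Ext{1}{R}{T_1}{Y}=0$. Hence $Y$ is a homomorphic image of $T_0^{(J)}\in\mathbb T$. Since $\operatorname{pd}X\le 1$, the functor $\Ext{1}{R}{X}{-}$ is right exact, so $\Ext{1}{R}{X}{T_0^{(J)}}=0$ forces $\Ext{1}{R}{X}{Y}=0$.

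This pushout is precisely the paper's key step. There it shows that $\mathbb T^{\perp_1}$ is the class of homomorphic images of modules in $\mathbb T$, hence closed under direct sums. The paper then applies Lemma~\ref{1tilt} and \cite[Lemma 13.16]{GT} to get $\mathbb T=\Add T'$. With the pushout inserted, your ordering is correct and slightly more economical. You get $\mathbb T^{\perp_1}=(\Add T')^{\perp_1}=(T')^{\perp_1}$ directly, with no appeal to Lemma~\ref{1tilt} or Lemma~13.16 in this direction.

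Your fallback routes do not close the gap.
\begin{itemize}
\item The description of ${}^{\perp_1}(\mathbb T^{\perp_1})$ as summands of $\mathbb T\cup\{R\}$-filtered modules is available for a set of modules. You only have that after knowing $\mathbb T^{\perp_1}=(T')^{\perp_1}$. It is also unnecessary for ${}^{\perp_1}\mathcal B\subseteq\mathcal P_1$, which follows exactly as in (1)$\to$(2).
\item The natural way to make a $\mathbb T$-precover special is the canonical $\Add T'$-precover. Its kernel lies in $(T')^{\perp_1}$, and placing it in $\mathbb T^{\perp_1}$ again needs the missing inclusion.
\end{itemize}

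A smaller point concerns (iii) for a projective $P$ with $P\oplus Q=R^{(I)}$. This does not follow by taking direct summands of sequences. Instead use $0\to P\to T_0^{(I)}\to T_0^{(I)}/P\to 0$. Here $T_0^{(I)}/P$ contains a copy of $Q$ with quotient $T_1^{(I)}$, and it is a quotient of $T_0^{(I)}$. Hence it lies in ${}^{\perp_1}\mathcal B\cap\mathcal B=\Add T$.
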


\begin{proof}
(1) $\leftrightarrow$ (2). Let $(\mathcal C, \mathcal T, \mathcal F)$ be a cotorsion torsion triple in $\rmod R$. Put $\mathcal B = \mathcal T$. Since the torsion class $\mathcal B$ is closed under homomorphic images and contains all injective modules, it also contains all first cosyzygy modules. Hence the class $\mathcal C = {^{\perp_{1}} \mathcal B}$ consists of modules of projective dimension $\leq 1$. As $\mathcal B$ is closed under direct sums, $(\mathcal C,\mathcal B)$ is a $1$-tilting cotorsion pair by Lemma \ref{1tilt}. That is, $\mathcal B$ is a $1$-tilting torsion class in $\rmod R$.

Conversely, each $1$-tilting torsion class $\mathcal B$ in $\rmod R$ induces the $1$-tilting cotorsion pair $(\mathcal A, \mathcal B)$ where $\mathcal A = {^{\perp_{1}} \mathcal B}$, and a torsion pair $(\mathcal B, \mathcal F)$ where $\mathcal F = \mbox{Ker} \Hom{R}{\mathcal T}{-}$, see e.g.\ \cite[p.337]{GT}.

So a bijective correspondence between (1) and (2) is given by $(\mathcal C, \mathcal T, \mathcal F) \mapsto \mathcal T$.

(2) $\rightarrow$ (3). Let $T$ be a $1$-tilting module such that $\mathcal B = T^{\perp_{1}}$. We put $\mathbb T = \Add {T}$, so $\mathbb T = \mathcal B \cap {}^{\perp_{1}} \mathcal B$ by \cite[Lemma 13.10(c)]{GT}. Clearly, $\mathbb T$ satisfies conditions (i)-(iii) \ from Definition \ref{wtilt}(1). In order to prove that $\mathbb T$ is a tilting subcategory, it remains to show that $\mathbb T$ is precovering in $\rmod R$.

For each $M \in\rmod R$ let $t(M)$ denote the $\mathcal B$-torsion part of $M$, and let $\nu : t(M) \hookrightarrow M$ be the inclusion. Since $\mathcal B$ is a $1$-tilting class, there is a complete cotorsion pair $(\mathcal A,\mathcal B)$, so we have a short exact sequence $0 \to B \to A \overset{\pi}\to t(M) \to 0$ where $B \in \mathcal B$, and $A \in \mathcal A \cap \mathcal B = \mathbb T$ by \cite[Lemma 13.10(c)]{GT}.

Let $N \in \mathbb T$ and $f \in \Hom {R}{N}{M}$. Then im$(f) \subseteq t(M)$. Since $\Ext {1}{R}{N}{B} = 0$, there exists $g \in \Hom {R}{N}{A}$ such that $\nu \pi g = f$. This proves that $\nu \pi$ is a $\mathbb T$-precover of $M$.

The correspondence from (2) to (3) is given by $\alpha : \mathcal B \mapsto \mathcal B \cap {^{\perp_{1}} \mathcal B}$, and we just take the identity map for the correspondence from (3) to (4).

(4) $\rightarrow$ (2). Let $\mathcal B = \mathbb T ^{\perp_{1}}$ and $\mathcal A = {^{\perp_{1}} \mathcal B}$. Then $(\mathcal A,\mathcal B)$ is a cotorsion pair in $\rmod R$. Moreover, by parts (i) and (ii)\ of Definition \ref{wtilt}(1), $\mathbb T \subseteq \mathcal A \cap \mathcal B$ and the class $\mathcal A$ consists of modules of projective dimension $\leq 1$. So the class $\mathcal B$ is closed under homomorphic images. In particular, any homomorphic image of a module from $\mathbb T$ belongs to $\mathcal B$.

Conversely, let $B \in \mathcal B$ and let $f \in \Hom {R}{P}{B}$ be an epimorphism with $P$ a projective module. By part (iii)\ of Definition \ref{wtilt}(1), we have a short exact sequence $0 \to P \overset{\nu}\to T_0 \to T_1 \to 0$ with $T_0, T_1 \in \mathbb T$. Taking the pushout of $f$ and $\nu$, we obtain a short exact sequence $0 \to B \to X \to T_1 \to 0$ where $X$ is a homomorphic image of $T_0$. Since $T_1 \in \mathcal A$ and $B \in \mathcal B$, the latter sequence splits, proving that $B$ is a homomorphic image of $T_0$.

Thus $\mathcal B$ is the class of all homomorphic images of modules from $\mathbb T$, hence $\mathcal B$ is closed under direct sums. By Lemma \ref{1tilt}, $(\mathcal A,\mathcal B)$ is a $1$-tilting cotorsion pair, and $\mathcal B$ is a $1$-tilting class.

So the correspondence from (4) to (2) is given by $\beta : \mathbb T \mapsto \mathbb T ^{\perp_{1}}$.

\vspace{2mm}
Notice that for a 1-tilting class $\mathcal B$, we have $\beta \alpha (\mathcal B) = (\mathcal B \cap {^{\perp_{1}} \mathcal B)}^{\perp_{1}}$. If $T$ is a 1-tilting module such that $T^{\perp_{1}} = \mathcal B$, then $\alpha (\mathcal B) = \Add {T}$, and $\beta (\Add {T}) = T^{\perp_{1}} = \mathcal B$. Hence $\beta \alpha = \hbox{id}$.

Conversely, let $\mathbb T$ be a weak tilting subcategory closed under direct sums. Then $\beta (\mathbb T) = \mathbb T ^{\perp_{1}}$ is a $1$-tilting class by the above. Let $T$ be a 1-tilting module such that $T^{\perp_{1}} = \mathbb T ^{\perp_{1}}$. Then $\Add {T} = \mathbb T ^{\perp_{1}} \cap {}^{\perp_{1}} (\mathbb T ^{\perp_{1}}) \supseteq \mathbb T$ by part (i)\ of Definition \ref{wtilt}(1).

By part (iii)\ of Definition \ref{wtilt}(1), there is a short exact sequence $0 \to R \to T_0 \to T_1 \to 0$ with $T_0, T_1 \in \mathbb T$. Let $T^\prime = T_0 \oplus T_1 \in \mathbb T$. By part (ii)\ of Definition \ref{wtilt}(1), $T^\prime$ has projective dimension $\leq 1$. Moreover, the closure of $\mathbb T$ under direct sums and part (i)\ of Definition \ref{wtilt}(1) give $\Ext {1}{R}{T^\prime}{(T^\prime)^{(I)}} = 0$ for any set $I$. Thus $T^\prime$ is also a $1$-tilting module.

Since $T^\prime \in \mathbb T$, we infer that $T^\prime \in \Add {T}$, whence $\mathbb T \supseteq \Add {T^\prime} = \Add {T} \supseteq \mathbb T$ by the above and by \cite[Lemma 13.16]{GT}. If follows that $\alpha \beta (\mathbb T) = \mathbb T ^{\perp_{1}} \cap {}^{\perp_{1}} (\mathbb T ^{\perp_{1}}) = \Add {T} = \mathbb T$, and $\alpha \beta = \mbox{id}$.

(2) $\leftrightarrow$ (5). By \cite[Corollary 14.7]{GT}, mutually inverse correspondences are given by the assignments $\mathcal B \mapsto \mathcal S$ where $\mathcal S$ is the class of all finitely presented modules in $^{\perp_{1}} \mathcal B$, and $\mathcal S \mapsto \mathcal S ^{\perp_{1}}$.
\end{proof}

In the dual setting, we have

\begin{theorem}\label{tct} Let $R$ be a ring. Then there is a bijective correspondence between
\begin{enumerate}
\item[(1)] torsion cotorsion triples $(\mathcal T, \mathcal F, \mathcal D)$ in $\lmod R$,

\item[(2)] $1$-cotilting torsion-free classes $\mathcal C$ in $\lmod R$,

\item[(3)] cotilting subcategories $\mathbb C$ of $\lmod R$, and

\item[(4)] weak cotilting subcategories $\mathbb C$ of $\lmod R$ closed under direct products.

\noindent In the case when $R$ is left noetherian, the bijective correspondence extends to

\item[(5)] $1$-resolving subcategories $\mathcal S$ of $\rfmod R$.
\end{enumerate}
\end{theorem}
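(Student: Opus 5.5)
The plan is to dualize the proof of Theorem \ref{ctt} step by step, using Lemma \ref{1cotilt} in place of Lemma \ref{1tilt}. For (4) $\leftrightarrow$ (5) in the left noetherian case, we then bring in the Ext--Tor duality.

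\emph{(1) $\leftrightarrow$ (2).} Let $(\mathcal T,\mathcal F,\mathcal D)$ be a torsion cotorsion triple in $\lmod R$. The torsion-free class $\mathcal F$ is closed under submodules and contains all projective modules, hence it contains all first syzygies. Therefore $\mathcal D=\mathcal F^{\perp_1}$ consists of modules of injective dimension $\leq 1$. Since $\mathcal F$ is also closed under direct products, Lemma \ref{1cotilt} shows that $(\mathcal F,\mathcal D)$ is $1$-cotilting. Conversely, a $1$-cotilting class is torsion-free by Example \ref{tiltcotilt-tpairs}, and it determines its torsion class ${}^{\perp_0}\mathcal F$. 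So the bijection is $(\mathcal T,\mathcal F,\mathcal D)\mapsto\mathcal F$.

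\emph{(2) $\to$ (3).} Take a $1$-cotilting module $C$ with $\mathcal C={}^{\perp_1}C$, and put $\mathbb C=\operatorname{Prod}C=\mathcal C\cap\mathcal C^{\perp_1}$ (the dual of \cite[Lemma 13.10(c)]{GT}). Conditions (i)--(iii) of Definition \ref{wtilt}(2) are clear; for (iii), use the completeness of $(\mathcal C,\mathcal C^{\perp_1})$ applied to an injective $I$. For the preenveloping property, let $M\in\lmod R$ and let $M\to M/t'(M)=:F\in\mathcal C$ be the torsion-free quotient. Choose a special $\mathcal C^{\perp_1}$-preenvelope $0\to F\to D\to C'\to 0$ with $C'\in\mathcal C$; then $D\in\mathcal C\cap\mathcal C^{\perp_1}=\mathbb C$. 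Any map $M\to N$ with $N\in\mathbb C$ factors through $F$, since $\mathbb C\subseteq\mathcal C$ and $\mathcal C$ is torsion-free. It then extends to $D$ because $\Ext{1}{R}{C'}{N}=0$. For (3) $\to$ (4) we use the identity map, which is legitimate by Lemma \ref{easy}.

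\emph{(4) $\to$ (2).} Put $\mathcal C={}^{\perp_1}\mathbb C$. By (ii) of Definition \ref{wtilt}(2), $\mathcal C$ is closed under submodules. We then show that $\mathcal C$ is exactly the class of submodules of modules in $\mathbb C$, dualizing the pushout argument. Given $X\in\mathcal C$, embed $X\hookrightarrow I$ with $I$ injective, and take $0\to C_0\to C_1\to I\to 0$ from (iii). Pulling back gives $0\to C_0\to Y\to X\to 0$ with $Y\subseteq C_1$, and this sequence splits because $X\in{}^{\perp_1}\mathbb C$. Hence $\mathcal C$ is closed under products, and Lemma \ref{1cotilt} applies. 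The mutual inverseness $\alpha\beta=\mathrm{id}$ and $\beta\alpha=\mathrm{id}$ is proved dually to Theorem \ref{ctt}. Here we use the dual of \cite[Lemma 13.16]{GT}: a $1$-cotilting $C'\in\mathbb C$ built from (iii) for an injective cogenerator satisfies $\operatorname{Prod}C'=\operatorname{Prod}C$.

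\emph{(2) $\leftrightarrow$ (5) for $R$ left noetherian.} This is the step I expect to be the main obstacle, since it is the only one that is not a formal dual. The plan is to send $\mathcal S\mapsto\mathcal S^{\intercal}$. For a $1$-resolving $\mathcal S\subseteq\rfmod R$, Theorem \ref{ctt} says $\mathcal S^{\perp_1}$ is $1$-tilting. Ext--Tor duality ($\Ext{1}{R}{S}{N^+}\cong\operatorname{Tor}_1^R(S,N)^+$ for finitely presented $S$) identifies $\mathcal S^{\intercal}$ with the $1$-cotilting class dual to this tilting class, via the dual tilting module \cite[Theorem 15.18]{GT} or equivalently via \cite[Chapter 15]{GT}. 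This correspondence is injective for every ring. For surjectivity we need that every $1$-cotilting class in $\lmod R$ is of cofinite type, i.e.\ of the form $\mathcal S^{\intercal}$. This is where left noetherianity is used: over such rings every $1$-cotilting class is dual to a tilting class, by the classification of cotilting classes of cofinite type \cite[Chapter 15]{GT}. I would cite that result and combine it with Theorem \ref{ctt}(2) $\leftrightarrow$ (5).
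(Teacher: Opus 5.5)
Your proposal is correct and follows the paper's proof. Parts (1)--(4) come from dualizing the proof of Theorem~\ref{ctt} with Lemma~\ref{1cotilt} in place of Lemma~\ref{1tilt}, and (5) is attached via $\mathcal S\mapsto\mathcal S^\intercal$, which is injective for every ring and surjective for left noetherian $R$ because $1$-cotilting classes are then of cofinite type (\cite[Theorem 15.31]{GT}). The one difference is minor: the paper gets injectivity directly from ${}^\intercal(\mathcal S^\intercal)=\varinjlim\mathcal S$ and $\mathcal S=\varinjlim\mathcal S\cap\rfmod R$, whereas you route it through Theorem~\ref{ctt} and the tilting/cotilting duality of \cite[Chapter 15]{GT}, and you only assert this step, so you should cite the specific result or prove it rather than leave it as a bare claim.
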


\begin{proof} The proof of the correspondence between parts (1) - (4) is dual to the one presented in Theorem \ref{ctt}, employing Lemma \ref{1cotilt} in place of Lemma \ref{1tilt}.

(5) $\rightarrow$ (2). Let $\mathcal S$ be a $1$-resolving subcategory of $\rfmod R$. Then $\mathcal C = \mathcal S ^\intercal$ is a $1$-cotilting class in $\lmod R$ by \cite[Theorem 15.19]{GT}. The assignment $\gamma : \mathcal S \mapsto \mathcal S ^\intercal$ is monic, since $^\intercal (\mathcal S ^\intercal) = \varinjlim \mathcal S$ by \cite[Theorem 8.40]{GT}, and $\mathcal S$ coincides with the class of all finitely presented modules in $\varinjlim \mathcal S$ by \cite[Lemma 2.13]{GT}.

(2) $\rightarrow$ (5). Assume $R$ is left noetherian. Then each $1$-cotilting class in $\lmod R$ is \emph{of cofinite type} by \cite[Theorem 15.31]{GT}. That is, $\mathcal C$ of the form $\mathcal C = \mathcal S ^\intercal$ for a $1$-resolving subcategory $\mathcal S$ of $\rfmod R$, see \cite[Theorem 15.19 and Lemma 13.48]{GT}. Thus, $\gamma$ is surjective.
\end{proof}

We note some immediate corollaries of Theorems \ref{ctt} and \ref{tct}.

\begin{corollary}\label{weakclosed} Let $R$ be a ring.
\begin{enumerate}
\item[(1)] A weak tilting subcategory $\mathbb T$ of $\rmod R$ is tilting, if and only if $\mathbb T$ is closed under direct sums.
\item[(2)] A weak cotilting subcategory $\mathbb T$ of $\lmod R$ is cotilting, if and only if $\mathbb C$ is closed under direct products.
\end{enumerate}
\end{corollary}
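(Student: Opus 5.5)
Both parts come straight out of the proofs of Theorems \ref{ctt} and \ref{tct}; no new homological input is needed. For (1), the direction ``tilting $\Rightarrow$ closed under direct sums'' is immediate. A tilting subcategory $\mathbb T$ is by definition precovering, and it is closed under direct summands (and isomorphisms) as part of being a weak tilting subcategory. So Lemma \ref{easy}(2) shows that $\mathbb T$ is closed under arbitrary direct sums.

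For the converse, let $\mathbb T$ be a weak tilting subcategory closed under direct sums. Then $\mathbb T$ is an object of type (4) in Theorem \ref{ctt}, and I trace it through the correspondences (4) $\rightarrow$ (2) $\rightarrow$ (3).
\begin{enumerate}
\item[(a)] Step (4) $\rightarrow$ (2) of that proof shows that $\mathcal B = \beta(\mathbb T) = \mathbb T^{\perp_1}$ is a $1$-tilting class. It also shows that $\alpha\beta(\mathbb T) = \mathcal B \cap {}^{\perp_1}\mathcal B = \mathbb T$.
\item[(b)] Step (2) $\rightarrow$ (3) shows that for every $1$-tilting class $\mathcal B$, the class $\alpha(\mathcal B) = \mathcal B \cap {}^{\perp_1}\mathcal B = \Add T$ is precovering. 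The precover of $M$ is the composition $A \to t(M) \hookrightarrow M$, where $A \to t(M)$ is a special $\mathcal A$-precover of the torsion part $t(M)$. Its kernel lies in $\mathcal B$, so maps from $\Add T$ lift through it.
\end{enumerate}
Combining (a) and (b), $\mathbb T = \alpha\beta(\mathbb T)$ is precovering, hence tilting.

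The only point to check is that $\alpha\beta = \mathrm{id}$ was established for every weak tilting subcategory closed under direct sums, without assuming precovering. It was: the argument used only conditions (i)--(iii) of Definition \ref{wtilt}(1) together with closure under direct sums, to see that $T' = T_0 \oplus T_1$ is a $1$-tilting module lying in $\mathbb T$, and then \cite[Lemma 13.16]{GT}.

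Part (2) is the formal dual, via the dual correspondences in Theorem \ref{tct}(1)--(4), which rest on Lemma \ref{1cotilt} in place of Lemma \ref{1tilt}. A cotilting subcategory is preenveloping, so it is closed under products by Lemma \ref{easy}(1). Conversely, a weak cotilting subcategory $\mathbb C$ closed under products gives the $1$-cotilting class $\mathcal C = {}^{\perp_1}\mathbb C$, and $\mathbb C = \mathcal C \cap \mathcal C^{\perp_1} = \operatorname{Prod} C$ for a $1$-cotilting module $C$.

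This class is preenveloping, and the preenvelope is built dually to (b). Let $M \to M/\bar t(M)$ be the torsion-free quotient with respect to the torsion pair whose torsion-free class is $\mathcal C$; the quotient lies in $\mathcal C$. Compose it with a special $\mathcal C^{\perp_1}$-preenvelope $0 \to M/\bar t(M) \to D \to A \to 0$, which exists by completeness of the $1$-cotilting cotorsion pair $(\mathcal C, \mathcal C^{\perp_1})$. Then $D \in \mathcal C \cap \mathcal C^{\perp_1}$ because $\mathcal C$ is closed under extensions and $A \in \mathcal C$.

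The main (mild) obstacle is confirming that the dual arguments hold verbatim: completeness of the $1$-cotilting pair, the identification $\mathcal C \cap \mathcal C^{\perp_1} = \operatorname{Prod} C$ (the dual of \cite[Lemma 13.10(c)]{GT}), and the dual of \cite[Lemma 13.16]{GT} for $\operatorname{Prod}$.
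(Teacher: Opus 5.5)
Your proposal is correct and is essentially the paper's argument. The paper states this corollary as immediate from Theorems~\ref{ctt} and~\ref{tct}. The forward direction comes from Lemma~\ref{easy}, and the converse from $\alpha\beta=\mathrm{id}$ on weak tilting subcategories closed under direct sums, together with the precovering construction in step (2)~$\rightarrow$~(3), and dually for cotilting. Your explicit dual preenvelope $M \to M/\bar t(M) \to D$ with $D \in \mathcal C \cap \mathcal C^{\perp_1}$ simply spells out what the paper leaves as ``dual''.
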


\begin{corollary}\label{explicit} Let $R$ be a ring. Then there is a 1-1 correspondence assigning to each cotorsion torsion triple $(\mathcal C, \mathcal T, \mathcal F)$ in $\rmod R$ the torsion cotorsion triple $(\mathcal A, \mathcal B, \mathcal D)$ in $\lmod R$, such that $\mathcal B = \mathcal S ^\intercal $ where $\mathcal S$ is the class of all finitely presented modules in $\mathcal C$.

This correspondence is bijective in case $R$ is left noetherian.
\end{corollary}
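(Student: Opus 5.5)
The plan is to compose the correspondences already established in Theorems \ref{ctt} and \ref{tct}, so no new homological input is needed. Start with a cotorsion torsion triple $(\mathcal C,\mathcal T,\mathcal F)$ in $\rmod R$. By the proof of Theorem \ref{ctt}, the step (1) $\to$ (2) gives the $1$-tilting class $\mathcal T$ with $\mathcal C = {}^{\perp_1}\mathcal T$. The step (2) $\to$ (5), via \cite[Corollary 14.7]{GT}, then assigns to it the $1$-resolving subcategory $\mathcal S = \mathcal C\cap \rfmod R$. Both steps are bijections, so the map from cotorsion torsion triples to $1$-resolving subcategories of $\rfmod R$ given by $(\mathcal C,\mathcal T,\mathcal F)\mapsto \mathcal S$ is a bijection. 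Its inverse is $\mathcal S \mapsto \mathcal S^{\perp_1}$, followed by completing $\mathcal S^{\perp_1}$ to a triple.

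Next I apply the map $\gamma:\mathcal S\mapsto \mathcal S^\intercal$ from the proof of Theorem \ref{tct}. By \cite[Theorem 15.19]{GT}, $\mathcal S^\intercal$ is a $1$-cotilting class in $\lmod R$. Moreover $\gamma$ is injective for every ring $R$, because ${}^\intercal(\mathcal S^\intercal)=\varinjlim\mathcal S$ and $\mathcal S = \varinjlim\mathcal S\cap\rfmod R$. Then I use the bijection (2) $\leftrightarrow$ (1) of Theorem \ref{tct}, which is dual to (1) $\leftrightarrow$ (2) of Theorem \ref{ctt}. It sends the $1$-cotilting torsion-free class $\mathcal B=\mathcal S^\intercal$ to the unique torsion cotorsion triple with middle term $\mathcal B$. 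Concretely, $\mathcal A={}^{\perp_0}\mathcal B$ and $\mathcal D=\mathcal B^{\perp_1}$. The torsion pair exists because $1$-cotilting classes are torsion-free classes (Example \ref{tiltcotilt-tpairs}), and $(\mathcal B,\mathcal D)$ is a cotorsion pair, being the $1$-cotilting cotorsion pair.

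The composite of a bijection, the injection $\gamma$, and another bijection is injective. This gives the required 1-1 correspondence, with middle term $\mathcal B=\mathcal S^\intercal$ as stated.

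For the final claim, assume $R$ is left noetherian. Then, by \cite[Theorem 15.31]{GT} as used in the proof of Theorem \ref{tct}, every $1$-cotilting class in $\lmod R$ is of cofinite type, i.e. equals $\mathcal S^\intercal$ for some $1$-resolving $\mathcal S\subseteq\rfmod R$. Hence $\gamma$ is surjective and the composite is bijective.

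The only point needing care is that the $\mathcal S$ in the statement, the finitely presented modules in $\mathcal C$, is exactly the one produced by \cite[Corollary 14.7]{GT}. This holds because $\mathcal C={}^{\perp_1}\mathcal T$ in the triple. Everything else is bookkeeping.
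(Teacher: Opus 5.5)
Your proposal is correct and matches the paper's proof. You parametrize cotorsion torsion triples by the $1$-resolving subcategories $\mathcal S = \mathcal C \cap \rfmod R$ via Theorem \ref{ctt}, then apply $\gamma : \mathcal S \mapsto \mathcal S^\intercal$ (injective for every ring, surjective when $R$ is left noetherian, as in Theorem \ref{tct}), and complete to the triple $({}^{\perp_0}\mathcal B, \mathcal B, \mathcal B^{\perp_1})$ using \cite[Theorem 15.19]{GT}; the paper makes the same composition, only stated more tersely.
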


\begin{proof} By Theorem \ref{ctt}, cotorsion torsion triples in $\rmod R$ are parametrized by $1$-resolving subcategories $\mathcal S$ of $\rfmod R$. Given such cotorsion torsion triple $(\mathcal C, \mathcal T, \mathcal F)$, the class $\mathcal S$ is obtained as the class of all finitely presented modules in $\mathcal C = {}^\perp \mathcal T$.

The torsion cotorsion triple $(\mathcal A, \mathcal B, \mathcal D)$ in $\lmod R$ corresponding to $\mathcal S$ is defined by $\mathcal B = \mathcal S ^\intercal$, $\mathcal A = \mbox{Ker} \Hom{R}{-}{\mathcal B}$, and $\mathcal D = \mathcal B ^\perp$ (see \cite[Theorem 15.19]{GT}).

By Theorem \ref{tct}, the correspondence above is bijective when $R$ is left noetherian.
\end{proof}

\begin{example}\label{relation} If $R$ is not left noetherian, then the 1-1 correspondence from (5) to (2) in Theorem \ref{tct} need not be bijective. That is, there may exist $1$-cotilting classes of left $R$-modules that are not of cofinite type. For example, if $R$ is a valuation domain, then the latter phenomenon occurs, if and only if $R$ \emph{not strongly discrete} (i.e., $R$ contains a non-zero idempotent prime ideal), see \cite{Ba}.
\end{example}

The known classification results from infinite dimensional tilting theory make it possible to be give further characterizations in various particular settings:

\begin{corollary}\label{artin+noe}
\begin{enumerate}
\item[(1)] Let $R$ be an artin algebra. Then there is a bijective correspondence between the cotorsion torsion triples in $\rmod R$ and the torsion classes in $\rfmod R$ containing an injective cogenerator. It is given by the assignment $(\mathcal C, \mathcal T, \mathcal F) \mapsto  \mathcal T \cap \rfmod R$.

\item[(2)] Let $R$ be a left noetherian ring. Then there is a bijective correspondence between the torsion cotorsion triples in $\lmod R$ and the torsionfree classes in $\lfmod R$ containing $R$, given by the assignment $(\mathcal T, \mathcal F, \mathcal D) \mapsto \mathcal F \cap \lfmod R$.

\item[(3)] Let $R$ be a left noetherian ring. Denote by $\mathcal G$ the torsion class in $\lfmod R$ consisting of all $G \in \lfmod R$ such that  $\Hom{R}{G}{R} = 0$ for each $G \in \mathcal G$. Then there is a bijective correspondence between the cotorsion torsion triples in $\rmod R$ and the torsion classes $\mathcal H$ in $\lfmod R$ contained in $\mathcal G$. It is given by the assignement $(\mathcal T, \mathcal F) \mapsto  \mathcal H = \{ N \in \rfmod R \mid M \otimes_R N = 0 \hbox{ for all } M \in \mathcal T \} \cap \mathcal G$.
\end{enumerate}
\end{corollary}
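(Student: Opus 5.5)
The plan is to prove the three parts separately, each time starting from Theorem~\ref{ctt} or Theorem~\ref{tct} and then applying a known classification of $1$-tilting or $1$-cotilting classes for the relevant ring.

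\emph{Part (1).} By Theorem~\ref{ctt}, cotorsion torsion triples $(\mathcal C,\mathcal T,\mathcal F)$ correspond bijectively to $1$-tilting classes $\mathcal T$ in $\rmod R$, and also to $1$-resolving subcategories $\mathcal S$ of $\rfmod R$. For an artin algebra I will use the known classification of $1$-tilting modules, and more precisely of their classes, from \cite{GT}: every $1$-tilting class $\mathcal T$ satisfies $\mathcal T = \varinjlim(\mathcal T\cap\rfmod R)$. The assignment $\mathcal T\mapsto \mathcal T\cap\rfmod R$ is then a bijection onto the torsion classes of $\rfmod R$ that contain all finitely generated injectives, equivalently an injective cogenerator $D(R)$.

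The steps are as follows.
\begin{enumerate}
\item[(a)] Show that $\mathcal T\cap\rfmod R$ is a torsion class in $\rfmod R$. It is closed under quotients, extensions and finite sums, and it contains $D(R)$ because $\mathcal T\supseteq\mathcal I_0$.
\item[(b)] Show the map is injective. Since $\mathcal S^{\perp_1}=\mathcal T$, by the Auslander--Reiten formula $\Ext{1}{R}{S}{-}\cong D\overline{\mathrm{Hom}}(-,\tau S)$, membership of a module in $\mathcal T$ is detected on finitely generated modules. Hence $\mathcal T$ is the class of modules all of whose finitely generated submodules' images lie in $\mathcal T\cap \rfmod R$, that is, $\mathcal T=\varinjlim(\mathcal T\cap\rfmod R)$.
\item[(c)] Show the map is surjective. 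Given a torsion class $\mathcal U$ of $\rfmod R$ containing $D(R)$, take $\mathcal S={}^{\perp_1}\mathcal U\cap\rfmod R$. Check that $\mathcal S$ is $1$-resolving, using that $D(R)\in\mathcal U$ and $\mathcal U$ is closed under quotients, so the projective dimension is at most $1$. Then check that $\mathcal S^{\perp_1}\cap\rfmod R=\mathcal U$.
\end{enumerate}
I expect step (c) to be the main obstacle. I will try to handle it through $\tau$-tilting or Smal{\o}-type arguments, or simply by citing the artin algebra result in \cite{GT}, Chapter~14, which states that $1$-tilting classes are of finite type and correspond to such torsion classes.

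\emph{Part (2).} Here I use Theorem~\ref{tct}, parts (1)$\leftrightarrow$(2). For left noetherian $R$, every $1$-cotilting class $\mathcal C$ is of cofinite type and equals $\varinjlim(\mathcal C\cap\lfmod R)$. This holds because cotilting torsion-free classes are closed under direct limits, and over a left noetherian ring a torsion-free class closed under limits is determined by its finitely generated members. So I will show that $\mathcal F\mapsto\mathcal F\cap\lfmod R$ is a bijection onto the torsion-free classes of $\lfmod R$ containing $R$. The inverse is $\mathcal U\mapsto\varinjlim\mathcal U$, and one must check that $\varinjlim\mathcal U$ is a $1$-cotilting torsion-free class. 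To do this, verify closure under submodules, extensions and products; products use that $R$ is left noetherian so that $\mathcal U$ is closed under submodules of products. Then verify that it contains $R$ and is closed under direct limits, so that Lemma~\ref{1cotilt} applies via \cite[Theorem 15.31]{GT}.

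\emph{Part (3).} I will combine Corollary~\ref{explicit}, which for left noetherian $R$ identifies cotorsion torsion triples in $\rmod R$ with torsion cotorsion triples in $\lmod R$, with part (2). Part (2) identifies these with torsion-free classes $\mathcal U\ni R$ of $\lfmod R$. Over a noetherian ring, torsion pairs in $\lfmod R$ are in bijection via $\mathcal U\leftrightarrow\mathcal H={}^{\perp_0}\mathcal U$, and the condition $R\in\mathcal U$ translates to $\Hom RHR=0$ for $H\in\mathcal H$, that is, $\mathcal H\subseteq\mathcal G$. It then remains to express $\mathcal H$ through $\mathcal T$. Using the Ext--Tor duality $\mathcal F=\mathcal S^\intercal$ and the identity $\mathrm{Tor}_1(S,N)\cong \mathrm{Tor}_1$ with the transposes, I will check that $\mathcal H$ consists of the finitely generated $N\in\mathcal G$ with $M\otimes_R N=0$ for all $M\in\mathcal T$. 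This identification is the delicate step, and I would test it first on a cyclic $N$.
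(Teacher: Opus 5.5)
Your Part (2) is essentially the paper's argument: Theorem~\ref{tct} combined with the classification in \cite[Theorem 15.26]{GT}. One caveat: the closure properties you list for $\varinjlim\mathcal U$ do not by themselves make it the left class of a cotorsion pair, which Lemma~\ref{1cotilt} presupposes. That step rests entirely on the citation.

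\textbf{Part (1).} Your claim that every $1$-tilting class over an artin algebra satisfies $\mathcal T=\varinjlim(\mathcal T\cap\rfmod R)$ is false. Here is a counterexample.
Let $\Lambda$ be the Kronecker algebra, and let $\mathbf p,\mathbf t,\mathbf q$ denote the finite dimensional preprojective, regular and preinjective modules. Put $\mathcal T=\mathbf p^{\perp_1}$, the Lukas tilting class; $\operatorname{add}\mathbf p$ is $1$-resolving.
Then $\mathcal T\cap\rfmod\Lambda=\operatorname{add}(\mathbf t\cup\mathbf q)$. The generic module $G$ satisfies $\Hom{\Lambda}{G}{\mathbf p}=0$, so $G\in\mathcal T$ by the Auslander--Reiten formula.
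On the other hand, every finite dimensional submodule of $G$ is preprojective, and $\Hom{\Lambda}{\mathbf t\cup\mathbf q}{\mathbf p}=0$. So every map from a module in $\mathcal T\cap\rfmod\Lambda$ to $G$ is zero, and $G\notin\varinjlim(\mathcal T\cap\rfmod\Lambda)$.

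You have transferred to the tilting side a fact that holds in Part (2) only because torsion-free classes are closed under submodules. Torsion classes are not, and your phrase about ``finitely generated submodules' images'' in step (b) is where the argument breaks.

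What the Auslander--Reiten formula actually gives is this: for $S\in\rfmod R$ of projective dimension $\le 1$ and arbitrary $M$, $\Ext{1}{R}{S}{M}=0$ iff $\Hom{R}{M}{\tau S}=0$. Hence $M\in\mathcal T$ iff every finitely generated quotient of $M$ lies in $\mathcal T$. Equivalently, $\mathcal T=\operatorname{Ker}\Hom{R}{-}{\mathcal F'}$, where $\mathcal F'$ is the torsion-free class of $\mathcal T\cap\rfmod R$ in $\rfmod R$. This is the inverse the paper uses, via \cite[Theorem 14.9]{GT}.

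With this correction, step (b) gives injectivity, and step (c) goes through. Since $DR\in\mathcal U$, $\tau$ maps the non-projective indecomposables of $\mathcal S$ onto the indecomposables of $\mathcal U^{\perp_0}$. Therefore $\mathcal S^{\perp_1}=\operatorname{Ker}\Hom{R}{-}{\mathcal U^{\perp_0}}$, and its finitely presented part is $\mathcal U$.

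\textbf{Part (3).} Your route (Corollary~\ref{explicit}, then Part (2), then torsion pairs in $\lfmod R$) differs from the paper's direct appeal to \cite[Theorem 15.31]{GT}, and it is viable. However, the step you postpone is the whole content, and the formula you quote is not the right one.

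Take $S\in\mathcal S$ with a monic presentation $0\to P_1\to P_0\to S\to 0$ by finitely generated projectives, and set $\operatorname{Tr}S=\operatorname{Coker}(P_0^*\to P_1^*)$. Then
$\operatorname{Tor}^R_1(S,N)\cong\Hom{R}{\operatorname{Tr}S}{N}$ and $\Ext{1}{R}{S}{M}\cong M\otimes_R\operatorname{Tr}S$.

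The first isomorphism gives $\mathcal U=\mathcal S^\intercal\cap\lfmod R=(\operatorname{Tr}\mathcal S)^{\perp_0}\cap\lfmod R$. So $\mathcal H={}^{\perp_0}\mathcal U\cap\lfmod R$ is the torsion class generated by $\operatorname{Tr}\mathcal S$.

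The class $\{H\mid M\otimes_R H=0\}$ is closed under quotients and extensions. Together with the second isomorphism this gives $\mathcal T=\{M\mid M\otimes_R H=0 \text{ for all } H\in\mathcal H\}$. Hence $\mathcal H$ lies in $\{N\mid M\otimes_R N=0 \text{ for all } M\in\mathcal T\}$.

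For the reverse inclusion, use character modules $U^+=\operatorname{Hom}_{\mathbb Z}(U,\mathbb Q/\mathbb Z)$. For $U\in\mathcal U$ we have $\Ext{1}{R}{S}{U^+}\cong\operatorname{Tor}^R_1(S,U)^+=0$, so $U^+\in\mathcal T$. Thus $\Hom{R}{N}{U^{++}}\cong(U^+\otimes_R N)^+=0$, whence $\Hom{R}{N}{U}=0$ because $U$ embeds in $U^{++}$.

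Finally, the intersection with $\mathcal G$ is then automatic, since $R\in\mathcal U$. Testing on cyclic $N$ would not replace this argument.
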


\begin{proof}
(1) This follows by \cite[Theorem 14.9]{GT}. The latter result shows that for artin algebras, $1$-tilting classes $\mathcal T$ in $\rmod R$ correspond bijectively to torsion classes $\mathcal T ^\prime$ in $\rfmod R$ containing all finitely generated injective modules. The correspondence is given by the mutually inverse maps $\mathcal T \mapsto \mathcal T \cap \rfmod R$ and $\mathcal T ^\prime \mapsto \mbox{Ker} \Hom{R}{-}{\mathcal F ^\prime}$ where $(\mathcal T^\prime, \mathcal F^\prime)$ is a torsion pair in $\rfmod R$.

(2) By \cite[Theorem 15.26]{GT}, for each left noetherian ring $R$, $1$-cotilting classes $\mathcal F$ in $\lmod R$ correspond bijectively to torsion-free classes $\mathcal F ^\prime$ in $\lfmod R$ containing $R$. The correspondence is given by the mutually inverse maps $\mathcal F \mapsto \mathcal F \cap \lfmod R$ and $\mathcal F ^\prime \mapsto \varinjlim \mathcal F ^\prime$.

(3) Let $(\mathcal C, \mathcal T, \mathcal F)$ be a cotorsion torsion triple in $\rmod R$. Then $\mathcal H = \{ N \in \rfmod R \mid M \otimes_R N = 0 \hbox{ for all } M \in \mathcal T \} \cap \mathcal G$ is a torsion class in $\rfmod R$ contained in $\mathcal G$. Conversely, given $\mathcal H$ as in the statement, $\mathcal T = \{ M \in \rmod R \mid M \otimes_R H = 0 \hbox{ for all } H \in \mathcal H \}$ is a $1$-tilting torsion class in $\rmod R$ by \cite[Theorem 15.31]{GT}.
\end{proof}

In the particular setting of commutative noetherian rings, Corollary \ref{explicit} gives an explicit bijective correspondence between the cotorsion torsion triples and the torsion cotorsion triples in $\rmod R$. Corollary \ref{comnoe} below makes this bijection explicit in a different way, using subsets of $P$ of $\Spec R$ containing $\Ass{R}{R}$ and \emph{closed under generalization} (i.e., such that $q \in P$ whenever $p \in P$ and $q \subseteq p$).

\begin{corollary}\label{comnoe} Let $R$ be a commutative noetherian ring. Then there is a bijective correspondence between
\begin{enumerate}
\item[(1)] subsets $P$ of $\Spec R$ that are closed under generalization and contain $\Ass{R}{R}$,
\item[(2)] cotorsion torsion triples $(\mathcal C, \mathcal T, \mathcal F)$ in $\rmod R$, and
\item[(3)] torsion cotorsion triples $(\mathcal A, \mathcal B, \mathcal D)$ in $\rmod R$.
\end{enumerate}
The correspondence assigns to each subset $P$ as in (1) the cotorsion torsion triple $(\mathcal C, \mathcal T, \mathcal F)$ with $\mathcal T = \{ M \in \rmod R \mid Mq = M \hbox{ for all } q \in \Spec R \setminus P \}$, and the torsion cotorsion triple $(\mathcal A, \mathcal B, \mathcal D)$ with $\mathcal B = \{ M \in \rmod R \mid \Ass{R}{M} \subseteq P \}$.
\end{corollary}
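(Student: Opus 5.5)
The plan is to run both halves of the correspondence through known classification results for commutative noetherian rings from infinite dimensional tilting theory, rather than through $1$-resolving subcategories directly.

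\emph{From (1) to (2).} By Theorem \ref{ctt}, cotorsion torsion triples $(\mathcal C,\mathcal T,\mathcal F)$ in $\rmod R$ correspond bijectively, via $(\mathcal C,\mathcal T,\mathcal F)\mapsto\mathcal T$, to $1$-tilting classes in $\rmod R$. For a commutative noetherian ring, \cite[Theorem 16.4]{GT} (the Angeleri--Pospíšil--Šťovíček--Trlifaj classification) describes these. The $n$-tilting classes correspond to characteristic sequences of subsets of $\Spec R$ closed under specialization. In the case $n=1$, this is a single specialization closed subset $X\subseteq\Spec R$ with $X\cap\Ass{R}{R}=\emptyset$. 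The corresponding tilting class is
$$\mathcal T_X=\{M\mid Mq=M \hbox{ for all } q\in X\},$$
using the description of $1$-tilting classes through $\operatorname{Tor}$-vanishing against $R/q$, i.e.\ $M\otimes_R R/q=0$.

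Put $P=\Spec R\setminus X$. Then:
\begin{enumerate}
\item $X$ is closed under specialization if and only if $P$ is closed under generalization;
\item $X\cap\Ass{R}{R}=\emptyset$ if and only if $\Ass{R}{R}\subseteq P$.
\end{enumerate}
This yields the bijection between (1) and (2) with the stated formula for $\mathcal T$. The work here is only to check that the formula in \cite[Theorem 16.4]{GT} for $n=1$ matches the displayed class verbatim.

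\emph{From (1) to (3).} Since $R$ is commutative noetherian, it is in particular left noetherian. Hence Theorem \ref{tct} (or Corollary \ref{artin+noe}(2)) gives a bijection between torsion cotorsion triples and $1$-cotilting classes $\mathcal B$. It also identifies these with $1$-resolving subcategories $\mathcal S$, via $\mathcal B=\mathcal S^\intercal$. Moreover, Corollary \ref{explicit} shows that the composite bijection (2)$\to$(3) is $\mathcal T\mapsto(\mathcal S^\intercal)$, where $\mathcal S=\rfmod R\cap{}^{\perp_1}\mathcal T$.

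It remains to identify $\mathcal S^\intercal$ for the $\mathcal S$ attached to $P$. The plan is to invoke the dual classification of cotilting classes over commutative noetherian rings, \cite[Theorem 16.x]{GT}, for the case $n=1$. That result describes the $1$-cotilting class corresponding to the same $X$ as $\{M\mid \Ass{R}{M}\cap X=\emptyset\}$, which is $\{M\mid\Ass{R}{M}\subseteq P\}$.

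Alternatively, one can verify this directly: the class $\{M\mid\Ass{R}{M}\subseteq P\}$ is torsion-free, closed under products and injective envelopes, and contains $R$. One then checks that it coincides with $\mathcal S^\intercal$ by testing against the cyclic modules $R/q$, $q\in X$, which generate $\mathcal S$ up to extensions.

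\emph{Main obstacle.} The delicate point is making sure that the tilting and the cotilting classification are indexed by the same subset $X$ under the duality $\mathcal S\mapsto\mathcal S^\intercal$, and not merely that both are indexed by some such subset. The approach is to check that the cotilting class attached to $X$ equals $\mathcal S^\intercal$, using that $\operatorname{Tor}_1(R/q,M)=0$ for all $q\in X$ and all modules in the class, together with the reverse inclusion from the classification.
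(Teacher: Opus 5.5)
Your treatment of (1)$\leftrightarrow$(2) is the same as the paper's: Theorem~\ref{ctt} reduces to $1$-tilting classes, and \cite[Theorem 16.4]{GT} with $n=1$ and $P=\Spec{R}\setminus X$ gives the formula for $\mathcal T$. For (1)$\leftrightarrow$(3) you take a different route. The paper does not use the Ext--Tor duality here. It combines Corollary~\ref{artin+noe}(2), which matches torsion cotorsion triples with torsion-free classes $\mathcal F'\subseteq\lfmod R$ containing $R$ via $\mathcal B=\varinjlim\mathcal F'$, with \cite[Lemma 16.5]{GT}, which sends $\mathcal F'\mapsto\Ass{R}{\mathcal F'}$. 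The statement only requires that $P\mapsto\mathcal T$ and $P\mapsto\mathcal B$ be bijections, so the ``delicate point'' you isolate is not needed for it. Composing (1)$\to$(2) with Corollary~\ref{explicit} would prove more: that the two parametrizations are matched by $\mathcal S\mapsto\mathcal S^\intercal$, which the paper only asserts in the sentence before the corollary. But within your argument, the identification of $\mathcal S^\intercal$ is then genuinely required.

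The way you propose to carry out that identification fails. It is not true that $\operatorname{Tor}^R_1(R/q,M)=0$ whenever $q\in X$ and $\Ass{R}{M}\subseteq P$. Take $R=k[[x,y]]$, $X=\{\mathfrak m\}$ and $M=R/(x)$. Then $\Ass{R}{M}=\{(x)\}\subseteq P$, yet $\operatorname{Tor}^R_1(R/\mathfrak m,R/(x))\cong k\neq 0$. Nor do the modules $R/q$ ($q\in X$) generate $\mathcal S$: $\mathcal S$ consists of modules of projective dimension $\leq 1$, whereas $\operatorname{pd}(R/\mathfrak m)=2$. In this example $\mathcal T=\mathfrak m^{\perp_1}$, since $0\to R\to R^2\to\mathfrak m\to 0$ gives $\Ext{1}{R}{\mathfrak m}{N}\cong N/\mathfrak mN$, and one computes $\operatorname{Tor}^R_1(\mathfrak m,M)\cong\Hom{R}{R/\mathfrak m}{M}$.

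That last isomorphism points to the correct test, which is Hom rather than $\operatorname{Tor}_1$. Write $M^+=\operatorname{Hom}_{\mathbb Z}(M,\mathbb Q/\mathbb Z)$.
\begin{itemize}
\item The isomorphism $\operatorname{Tor}^R_1(S,M)^+\cong\Ext{1}{R}{S}{M^+}$ gives $M\in\mathcal S^\intercal$ if and only if $M^+\in\mathcal S^{\perp_1}=\mathcal T$.
\item Since $R/q$ is finitely presented, $R/q\otimes_R M^+\cong\Hom{R}{R/q}{M}^+$.
\item Hence $M\in\mathcal S^\intercal$ if and only if $\Hom{R}{R/q}{M}=0$ for all $q\in X$.
\item Because $X$ is closed under specialization, this is equivalent to $\Ass{R}{M}\cap X=\emptyset$, i.e.\ $\Ass{R}{M}\subseteq P$. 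Indeed, the image of a nonzero map $R/q\to M$ is some $R/J$ with $J\supseteq q$, and its associated primes lie in $X\cap\Ass{R}{M}$.
\end{itemize}
With this replacement your route is complete, and it no longer depends on the unspecified ``Theorem 16.x''.
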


\begin{proof} The first assignment works by \cite[Theorem 16.4]{GT}. The second follows by part (2) and the fact (\cite[Lemma 16.5]{GT}) that for a commutative noetherian ring, the torsion-free classes $\mathcal F ^\prime$ in $\rfmod R$ containing $R$ correspond bijectively to generalization closed subsets $P$ of $\Spec R$ via the assignment $\mathcal F ^\prime \mapsto \Ass{R}{\mathcal F ^\prime}$.
\end{proof}

As an application of Theorems \ref{ctt} and \ref{tct}, we will show that it is not possible to construct non-trivial cotorsion torsion triples (resp. torsion cotorsion triples) from cotorsion pairs induced by the class of Gorenstein projective or flat modules (resp. the class of Gorenstein injective modules).

\begin{corollary}\label{cor:GP-ctt} The following results are true for any ring $R$:
\begin{enumerate}
\item[(1)] $(\mathcal{GP}, \mathcal{GP}^{\perp_{1}}, (\mathcal{GP}^{\perp_{1}})^{\perp_{0}})$ is a cotorsion torsion triple in $\rmod R$ if and only if $\mathcal{GP} = \mathcal P _0$.
\item[(2)] $(^{\perp_{0}}(^{\perp_{1}}\mathcal{GI}), {^{\perp_{1}}\mathcal{GI}}, \mathcal{GI})$ is a torsion cotorsion triple in $\rmod R$ if and only if $\mathcal{GI} = \mathcal I _0$.
\item[(3)] $(\mathcal{GF}, \mathcal{GF}^{\perp_{1}}, (\mathcal{GF}^{\perp_{1}})^{\perp_{0}})$ is a cotorsion torsion triple in $\rmod R$ if and only if $\mathcal{GF} = \mathcal P _0$.
\end{enumerate}
\end{corollary}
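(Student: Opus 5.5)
The plan is to run all three parts through Theorem \ref{ctt} (and, for part (2), through its dual Theorem \ref{tct}) in the following form. If $(\mathcal C,\mathcal T,\mathcal F)$ is a cotorsion torsion triple, then $(\mathcal C,\mathcal T)$ is a $1$-tilting cotorsion pair. In particular $\mathcal C\subseteq\mathcal P_1$, every module in $\mathcal C$ has projective dimension at most $1$, and $\mathcal T$ is closed under homomorphic images. The "if" directions are trivial in each case. If $\mathcal{GP}=\mathcal P_0$, then $\mathcal{GP}^{\perp_1}=\rmod R$, and $(\mathcal P_0,\rmod R,0)$ is a cotorsion torsion triple, because $(\rmod R,0)$ is a torsion pair. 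Dually, if $\mathcal{GI}=\mathcal I_0$, then $(0,\rmod R,\mathcal I_0)$ is a torsion cotorsion triple. If $\mathcal{GF}=\mathcal P_0$, the argument for (1) applies verbatim.

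For the "only if" direction of (1), suppose the triple is cotorsion torsion. Then every $M\in\mathcal{GP}$ has $\operatorname{pd}(M)\le 1$, so $M\in\mathcal{GP}\cap\mathcal P=\mathcal P_0$, as recorded in the preliminaries on Gorenstein modules. Hence $\mathcal{GP}=\mathcal P_0$. This is the cleanest step: it only uses the inclusion $\mathcal{GP}\subseteq\mathcal P_1$ that comes from the $1$-tilting property (Lemma \ref{1tilt} and the argument in the proof of Theorem \ref{ctt}).

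For (3) I expect the main obstacle, since $\mathcal{GF}\cap\mathcal P$ is in general only contained in the flat modules, not in the projectives. The triple condition gives $\mathcal{GF}\subseteq\mathcal P_1$, and the first step is to show $\mathcal{GF}\subseteq\mathcal F_0$. Take $M\in\mathcal{GF}$ with $\operatorname{pd}(M)\le1$. Embed $M$ in a flat module $F$ with cokernel $M'\in\mathcal{GF}$, using the defining complete flat resolution. Then $M'\in\mathcal P_1$ as well, so $M$ is a first syzygy of $M'$ sitting inside a flat module, and $\operatorname{Tor}$ against injectives vanishes by Gorenstein flatness. Finite flat dimension together with Gorenstein flatness forces flatness, which is the standard fact that $\mathcal{GF}$ has trivial intersection with the class of modules of finite flat dimension. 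So $\mathcal{GF}\subseteq\mathcal F_0$.

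Next I use that $\mathcal{GF}$ contains all projectively coresolved Gorenstein flat modules, and that the cotorsion pair $(\mathcal{GF},\mathcal{GF}^{\perp_1})$ is complete. From $\mathcal{GF}=\mathcal F_0$ we get $\mathcal{GF}^{\perp_1}=\mathcal F_0^{\perp_1}$, which by the triple condition is a $1$-tilting class. The left class $\mathcal F_0$ therefore equals $\Add T$-coresolved modules of length $\le1$. Moreover, $\mathcal F_0\cap\mathcal F_0^{\perp_1}=\Add T$ consists of the flat cotorsion modules and is closed under direct sums, since both $\mathcal F_0$ and the tilting class are. As in the proof of Proposition \ref{prop:GF-balanced-pair}, the Guil Asensio--Herzog theorem \cite[Theorem 19]{GH} then yields that $R$ is right perfect. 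Consequently $\mathcal F_0=\mathcal P_0$, and $\mathcal{GF}=\mathcal P_0$.

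Part (2) is dual to (1). If the triple is torsion cotorsion, then $({}^{\perp_1}\mathcal{GI},\mathcal{GI})$ is a $1$-cotilting cotorsion pair by the dual argument in Theorem \ref{tct} (a torsion-free class contains all first syzygies of projectives), so $\mathcal{GI}\subseteq\mathcal I_1$. Hence $\mathcal{GI}\subseteq\mathcal{GI}\cap\mathcal I=\mathcal I_0$.
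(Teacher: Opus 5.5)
Your proposal is correct. Parts (1) and (2) follow the paper's argument: the $1$-tilting (resp.\ $1$-cotilting) property forces $\operatorname{pd}\le 1$ (resp.\ $\operatorname{id}\le 1$), and then $\mathcal{GP}\cap\mathcal P=\mathcal P_0$ (resp.\ $\mathcal{GI}\cap\mathcal I=\mathcal I_0$). One wording slip in (2): "first syzygies of projectives" should read "all first syzygies", i.e.\ all submodules of projective modules; the torsion-free class ${}^{\perp_1}\mathcal{GI}$ contains these because it contains $\mathcal P_0$ and is closed under submodules.

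For (3) you reverse the order of the paper's steps. The paper applies \cite[Theorem 19]{GH} directly to $\mathcal{GF}\cap\mathcal{GF}^{\perp_1}$, which needs the identification of this class with the flat cotorsion modules from \cite[Corollary 4.12]{SaS}. After obtaining right perfectness, it uses that $\mathcal{GF}$ then coincides with the projectively coresolved Gorenstein flat modules, hence lies in $\mathcal{GP}$, and concludes via $\mathcal{GP}\cap\mathcal P=\mathcal P_0$.

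You first collapse $\mathcal{GF}$ to $\mathcal F_0$. From $0\to M\to F\to M'\to 0$ with $F$ flat and $M'\in\mathcal{GF}\subseteq\mathcal P_1$, you get $\operatorname{Tor}^R_1(M,-)\cong\operatorname{Tor}^R_2(M',-)=0$ over any ring. This one-line shift is all you need; the appeal to Tor against injectives is superfluous. After that, $\mathcal{GF}^{\perp_1}$ is just the class of Enochs cotorsion modules, and the intersection is the flat cotorsion class by definition. Bass's theorem (flat equals projective over right perfect rings) then replaces the passage through projectively coresolved Gorenstein flat modules.

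So your route avoids both inputs from \cite{SaS} at the cost of one elementary step. Your remarks about projectively coresolved Gorenstein flat modules, completeness of $(\mathcal{GF},\mathcal{GF}^{\perp_1})$, and $\Add T$-coresolutions are never used and can be dropped.

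Once $\mathcal{GF}=\mathcal F_0$ is known, you could even bypass \cite{GH}. By the correspondence (2)$\leftrightarrow$(5) of Theorem \ref{ctt}, the tilting class $\mathcal{GF}^{\perp_1}$ equals $\mathcal S^{\perp_1}$ with $\mathcal S=\mathcal F_0\cap\rfmod R$. Finitely presented flat modules are projective, so $\mathcal{GF}^{\perp_1}=\rmod R$ and $\mathcal{GF}=\mathcal P_0$.
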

\begin{proof} (1)  It suffices to show the ``only if'' part. By Theorem \ref{ctt}, $(\mathcal{GP}, \mathcal{GP}^{\perp_{1}})$ is a 1-tilting cotorsion pair, and hence the projective dimension of any Gorenstein projective right module is at most 1 by Lemma \ref{1tilt}. This forces $\mathcal{GP}$ to be exactly the class of projective right $R$-modules.

(2) The argument is analogous to that of (1).

(3) Again it suffices to show the ``only if'' part. It follows from Theorem \ref{ctt} that $(\mathcal{GF},\mathcal{GF}^{\perp_{1}})$ is a 1-tilting cotorsion pair, so in particular $\mathcal{GF}^{\perp_{1}}$ is closed under direct sums. Observe that $\mathcal{GF}\cap \mathcal{GF}^{\perp_{1}}$ is precisely the class of flat and cotorsion right $R$-modules. Therefore, by a result of Herzog and Guil Asensio \cite[Theorem 19]{GH}, the ring $R$ is right perfect. It follows that $\mathcal{GF}$ is exactly the class of projectively coresolved Gorenstein flat right $R$-modules, whence $\mathcal{GF}$ belongs to $\mathcal{GP}$. Because $(\mathcal{GF},\mathcal{GF}^{\perp_{1}})$ is a 1-tilting cotorsion pair, the projective dimension of any Gorenstein flat right module is at most 1. Thus $\mathcal{GF}$ is exactly the class of projective right $R$-modules, as desired.
\end{proof}

Let $R$ be a ring. Recall that the \emph{little finitistic dimension}
of $R$, $\textrm{findim}(R)$, is the supremum of projective dimensions of all
finitely generated modules of finite projective dimension. It is
conjectured that $\textrm{findim}(R)<\infty$ holds for all artin algebras $R$; see
Bass \cite{HBs60} and \cite[Conjectures]{rta}. This is the
Finitistic Dimension Conjecture. An application of Theorem \ref{ctt} yields the following result.

\begin{corollary}\label{cor:fin.dim} Let $R$ be a right noetherian ring, and let $\mathcal{P}^{<\infty}(R)$ denote the class of finitely generated modules of finite projective dimension. Set $\mathcal{V} = (\mathcal{P}^{<\infty}(R))^{\perp_{1}}$. Then the following conditions are equivalent:
\begin{enumerate}
\item[(1)] ${\rm findim}(R)\leq1$.
\item[(2)] $({^{\perp_{1}}{\mathcal{V}}},\mathcal{V}, \mathcal{V}^{\perp_{0}})$ is a cotorsion torsion triple in $\rmod R$.
\end{enumerate}
\end{corollary}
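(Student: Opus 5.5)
Since $R$ is right noetherian, finitely generated modules are finitely presented. So $\mathcal S := \mathcal{P}^{<\infty}(R)$ is a class of finitely presented modules, closed under extensions and direct summands, and containing $R$. By Theorem \ref{ctt}, (2) holds if and only if $\mathcal V = \mathcal S^{\perp_1}$ is a $1$-tilting torsion class. The plan is to prove (1) $\Leftrightarrow$ (2) through this reformulation.

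\emph{(1) $\Rightarrow$ (2).} If ${\rm findim}(R)\le 1$, then every module in $\mathcal S$ has projective dimension $\le 1$. Hence $\mathcal S$ is a $1$-resolving subcategory of $\rfmod R$, by the equivalent description in Definition \ref{small}. By the correspondence (5) $\to$ (2) of Theorem \ref{ctt}, i.e.\ \cite[Corollary 14.7]{GT}, the class $\mathcal S^{\perp_1} = \mathcal V$ is a $1$-tilting torsion class. Correspondence (2) $\to$ (1) then yields the cotorsion torsion triple $({}^{\perp_1}\mathcal V, \mathcal V, \mathcal V^{\perp_0})$. Here I use that the torsion-free class of the torsion pair with torsion class $\mathcal V$ is $\mathcal V^{\perp_0}$.

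\emph{(2) $\Rightarrow$ (1).} Assume the triple is a cotorsion torsion triple. As in the proof of (1) $\to$ (2) of Theorem \ref{ctt}, the torsion class $\mathcal V$ contains all injectives and is closed under quotients. Therefore ${}^{\perp_1}\mathcal V$ consists of modules of projective dimension $\le 1$ (Ext$^2(C,N)\cong$ Ext$^1(C,\Omega^{-1}N)$ with $\Omega^{-1}N \in \mathcal V$). Since $\mathcal S \subseteq {}^{\perp_1}(\mathcal S^{\perp_1}) = {}^{\perp_1}\mathcal V$, every finitely generated module of finite projective dimension has projective dimension $\le 1$, that is, ${\rm findim}(R)\le 1$.

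The main point to check is the very first one: that $\mathcal S$ is closed under extensions and direct summands and consists of finitely presented modules. This is routine, using right noetherianity and the standard behaviour of projective dimension in short exact sequences.
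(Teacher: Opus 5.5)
Your proof is correct, but it takes a different route from the paper's.

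The paper obtains both implications from \cite[Theorem 2.6]{AT02}, the tilting characterization of the little finitistic dimension. For (1)$\Rightarrow$(2), it takes from that theorem a $1$-tilting module $T$ with $\mathcal V = T^{\perp_\infty}$ and then applies Theorem~\ref{ctt}. For (2)$\Rightarrow$(1), it uses Theorem~\ref{ctt} to see that $\mathcal V$ is a $1$-tilting class, and then quotes \cite[Theorem 2.6]{AT02} again.

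You never leave the framework of Theorem~\ref{ctt}:
\begin{enumerate}
\item[(a)] For (1)$\Rightarrow$(2), you note that under ${\rm findim}(R)\le 1$ the class $\mathcal P^{<\infty}(R)$ is itself a $1$-resolving subcategory of $\rfmod R$. Right noetherianity is needed here only to make its members finitely presented. The correspondence (5)$\to$(2)$\to$(1) then produces the triple. As a bonus, this identifies $\mathcal P^{<\infty}(R)$ as the $1$-resolving subcategory classifying the triple.
\item[(b)] For (2)$\Rightarrow$(1), you replace the citation with a dimension shift. Since $\mathcal V$ contains the injectives and is closed under quotients, ${}^{\perp_1}\mathcal V$ consists of modules of projective dimension $\le 1$, and it contains $\mathcal P^{<\infty}(R)$.
\end{enumerate}

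What each approach buys: your argument is self-contained modulo Theorem~\ref{ctt}. It also shows that (2)$\Rightarrow$(1) needs no noetherian hypothesis at all, since only closure of $\mathcal V$ under quotients is used. The paper's argument is shorter on the page and presents the corollary as a special case of the Angeleri H\"ugel--Trlifaj link between ${\rm findim}(R)$ and tilting classes, at the cost of depending on that external result.
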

\begin{proof} $(1)\Rightarrow(2)$. Since ${\rm findim}(R)\leq1$, there is a 1-tilting module $T$ such that
$\mathcal{V}=T^{\perp_{\infty}}$ is a 1-tilting torsion class in $\rmod R$ by \cite[Theorem 2.6]{AT02} and \cite[p.337]{GT}. Therefore, $({^{\perp_{1}}{\mathcal{V}}},\mathcal{V}, \mathcal{V}^{\perp_{0}})$ is a cotorsion torsion triple in $\rmod R$ by Theorem \ref{ctt}.

$(2)\Rightarrow(1)$. Theorem \ref{ctt} implies that $\mathcal{V}$ is a 1-tilting class. Hence ${\rm findim}(R)\leq1$ by \cite[Theorem 2.6]{AT02}.
\end{proof}

\end{document}